\documentclass[a4paper,12pt]{amsart}

\usepackage{amsmath,amsxtra,amssymb,latexsym, amscd,amsthm,dsfont,subcaption,cases}
\usepackage[mathscr]{eucal}
\usepackage{mathrsfs,bbm,enumerate,calc,yhmath,mathtools,faktor}
\usepackage{stackengine}
\usepackage{pict2e}
\usepackage{tikz}
\usepackage{graphicx}
\usepackage{color}
\usepackage[foot]{amsaddr}

\usepackage[a4paper]{geometry}
\usepackage{hyperref}
\hypersetup{colorlinks=true, breaklinks=true, urlcolor= blue, linkcolor= black, citecolor=teal,
  pdftitle={Friedman-Ramanujan functions}, 
  pdfauthor={Nalini Anantharaman and Laura Monk}, 
  pdfsubject={}}

\usepackage[capitalize]{cleveref}
\crefname{equation}{equation}{equations}
\Crefname{equation}{Equation}{Equations}
\crefname{figure}{Figure}{Figures}
\Crefname{figure}{Figure}{Figures}

\numberwithin{equation}{section}

\title{Typical hyperbolic surfaces have a spectral gap greater than $2/9 - \epsilon$}

\author{Nalini Anantharaman\textsuperscript{1} and Laura Monk\textsuperscript{2}}

\address[1]{Coll\`ege de France, 11 place Marcelin Berthelot, 75005 Paris / IRMA, 7 rue Ren\'e Descartes, 67084 Strasbourg Cedex, France} 

\address[2]{School of Mathematics, University of Bristol, Bristol BS8 1UG, U.K.}

\email{nalini.anantharaman@college-de-france.fr}
\email{laura.monk@bristol.ac.uk}

\makeatletter
\@namedef{subjclassname@2020}{\textup{2020} Mathematics Subject Classification}
\makeatother

\subjclass[2020]{Primary 58J50, 32G15; Secondary 05C80, 11F72}

\keywords{Random hyperbolic surfaces, Weil--Petersson form, moduli space,
  spectral gap, closed geodesic, Selberg trace formula.}

\date{\today}

\theoremstyle{plain}
\newtheorem{thm}{Theorem}[section]
\newtheorem{prp}[thm]{Proposition}

\newtheorem{lem}[thm]{Lemma}

\newtheorem*{namedthm}{\namedthmname}
\newcounter{namedthm}



\theoremstyle{definition}

\newtheorem{rem}[thm]{Remark}

\newtheorem{nota}[thm]{Notation}

\renewcommand{\d}{\, \mathrm{d}}

\makeatletter
\newcommand*{\ov}[1]{%
  $\m@th\overline{\mbox{#1}}$%
}
\newcommand*{\ovA}[1]{%
  $\m@th\overline{\mbox{#1}\raisebox{3mm}{}}$%
}
\newcommand*{\ovB}[1]{%
  $\m@th\overline{\mbox{#1\rule{0pt}{3mm}}}$%
}
\newcommand*{\ovC}[1]{%
  $\m@th\overline{\mbox{#1\strut}}$%
}
\newcommand*{\ovD}[1]{%
  $\m@th\overline{\mbox{#1\vphantom{\"A}}}$%
}
\newcommand*{\ovE}[1]{%
  $\m@th\overline{\raisebox{0pt}[1.2\height]{#1}}$%
}
\newcommand*{\ovF}[1]{%
  $\m@th\overline{\raisebox{0pt}[\dimexpr\height+0.3mm\relax]{#1}}$%
}
\newcommand*{\ovG}[1]{%
  $\m@th\overline{\raisebox{0pt}[\dimexpr\height+1mm\relax]{#1\vphantom{A}}}$%
}
\makeatother

\newcommand{\Z}{\mathbb{Z}}

\newcommand{\R}{\mathbb{R}}
\newcommand{\C}{\mathbb{C}}

\newcommand{\D}{\mathcal{D}}

\DeclareSymbolFont{extraup}{U}{zavm}{m}{n}
\DeclareMathSymbol{\varheart}{\mathalpha}{extraup}{86}
\DeclareMathSymbol{\vardiamond}{\mathalpha}{extraup}{87}

\newcommand{\nwc}{\newcommand}

\nwc{\mf}{\mathbf} 
\nwc{\blds}{\boldsymbol} 
\nwc{\ml}{\mathcal} 

\nwc{\lam}{\lambda}
\nwc{\del}{\delta}
\nwc{\Del}{\Delta}
\nwc{\Lam}{\Lambda}
\nwc{\elll}{\ell}

\nwc{\IA}{\mathbb{A}} 
\nwc{\IB}{\mathbb{B}} 
\nwc{\IC}{\mathbb{C}} 
\nwc{\ID}{\mathbb{D}} 
\nwc{\IE}{\mathbb{E}} 
\nwc{\IF}{\mathbb{F}} 
\nwc{\IG}{\mathbb{G}} 
\nwc{\IH}{\mathbb{H}} 
\nwc{\IN}{\mathbb{N}} 
\nwc{\IP}{\mathbb{P}} 
\nwc{\IQ}{\mathbb{Q}} 
\nwc{\IR}{\mathbb{R}} 
\nwc{\IS}{\mathbb{S}} 
\nwc{\IT}{\mathbb{T}} 
\nwc{\IZ}{\mathbb{Z}} 
\def\bbbone{{\mathchoice {1\mskip-4mu {\rm{l}}} {1\mskip-4mu {\rm{l}}}
{ 1\mskip-4.5mu {\rm{l}}} { 1\mskip-5mu {\rm{l}}}}}
\def\bbleft{{\mathchoice {[\mskip-3mu {[}} {[\mskip-3mu {[}}{[\mskip-4mu {[}}{[\mskip-5mu {[}}}}
\def\bbright{{\mathchoice {]\mskip-3mu {]}} {]\mskip-3mu {]}}{]\mskip-4mu {]}}{]\mskip-5mu {]}}}}
\nwc{\setK}{\bbleft 1,K \bbright}
\nwc{\setN}{\bbleft 1,\cN \bbright}
 
\nwc{\va}{{\bf a}}
\nwc{\vb}{{\bf b}}
\nwc{\vc}{{\bf c}}
\nwc{\vd}{{\bf d}}
\nwc{\ve}{{\bf e}}
\nwc{\vf}{{\bf f}}
\nwc{\vg}{{\bf g}}
\nwc{\vh}{{\bf h}}
\nwc{\vi}{{\bf i}}
\nwc{\vI}{{\bf I}}
\nwc{\vj}{{\bf j}}
\nwc{\vk}{{\bf k}}
\nwc{\vl}{{\bf l}}
\nwc{\vm}{{\bf m}}
\nwc{\vM}{{\bf M}}
\nwc{\vn}{{\bf n}}
\nwc{\vo}{{\it o}}
\nwc{\vp}{{\bf p}}
\nwc{\vq}{{\bf q}}
\nwc{\vr}{{\bf r}}
\nwc{\vs}{{\bf s}}
\nwc{\vt}{{\bf t}}
\nwc{\vu}{{\bf u}}
\nwc{\vv}{{\bf v}}
\nwc{\vw}{{\bf w}}
\nwc{\vx}{{\bf x}}
\nwc{\vy}{{\bf y}}
\nwc{\vz}{{\bf z}}
\nwc{\bal}{\blds{\alpha}}
\nwc{\bep}{\blds{\epsilon}}
\nwc{\barbep}{\overline{\blds{\epsilon}}}
\nwc{\bnu}{\blds{\nu}}
\nwc{\bmu}{\blds{\mu}}
\nwc{\bet}{\blds{\eta}}

\nwc{\bk}{\blds{k}}
\nwc{\bm}{\blds{m}}
\nwc{\bM}{\blds{M}}
\nwc{\bp}{\blds{p}}
\nwc{\bq}{\blds{q}}
\nwc{\bn}{\blds{n}}
\nwc{\bv}{\blds{v}}
\nwc{\bw}{\blds{w}}
\nwc{\bx}{\blds{x}}
\nwc{\bxi}{\blds{\xi}}
\nwc{\by}{\blds{y}}
\nwc{\bz}{\blds{z}}

\nwc{\cA}{\ml{A}}
\nwc{\cB}{\ml{B}}
\nwc{\cC}{\ml{C}}
\nwc{\cD}{\ml{D}}
\nwc{\cE}{\ml{E}}
\nwc{\cF}{\ml{F}}
\nwc{\cG}{\ml{G}}
\nwc{\cH}{\ml{H}}
\nwc{\cI}{\ml{I}}
\nwc{\cJ}{\ml{J}}
\nwc{\cK}{\ml{K}}
\nwc{\cL}{\ml{L}}
\nwc{\cM}{\ml{M}}
\nwc{\cN}{\ml{N}}
\nwc{\cO}{\ml{O}}
\nwc{\cP}{\ml{P}}
\nwc{\cQ}{\ml{Q}}
\nwc{\cR}{\ml{R}}
\nwc{\cS}{\ml{S}}
\nwc{\cT}{\ml{T}}
\nwc{\cU}{\ml{U}}
\nwc{\cV}{\ml{V}}
\nwc{\cW}{\ml{W}}
\nwc{\cX}{\ml{X}}
\nwc{\cY}{\ml{Y}}
\nwc{\cZ}{\ml{Z}}

\nwc{\fA}{\mathfrak{a}}
\nwc{\fB}{\mathfrak{b}}
\nwc{\fC}{\mathfrak{c}}
\nwc{\fD}{\mathfrak{d}}
\nwc{\fE}{\mathfrak{e}}
\nwc{\fF}{\mathfrak{f}}
\nwc{\fG}{\mathfrak{g}}
\nwc{\fH}{\mathfrak{h}}
\nwc{\fI}{\mathfrak{i}}
\nwc{\fJ}{\mathfrak{j}}
\nwc{\fK}{\mathfrak{k}}
\nwc{\fL}{\mathfrak{l}}
\nwc{\fM}{\mathfrak{m}}
\nwc{\fN}{\mathfrak{n}}
\nwc{\fO}{\mathfrak{o}}
\nwc{\fP}{\mathfrak{p}}
\nwc{\fQ}{\mathfrak{q}}
\nwc{\fR}{\mathfrak{r}}
\nwc{\fS}{\mathfrak{s}}
\nwc{\fT}{\mathfrak{t}}
\nwc{\fU}{\mathfrak{u}}
\nwc{\fV}{\mathfrak{v}}
\nwc{\fW}{\mathfrak{w}}
\nwc{\fX}{\mathfrak{x}}
\nwc{\fY}{\mathfrak{y}}
\nwc{\fZ}{\mathfrak{z}}

\nwc{\tA}{\widetilde{A}}
\nwc{\tB}{\widetilde{B}}
\nwc{\tE}{E^{\vareps}}
\nwc{\tk}{\tilde k}
\nwc{\tN}{\tilde N}
\nwc{\tP}{\widetilde{P}}
\nwc{\tQ}{\widetilde{Q}}
\nwc{\tR}{\widetilde{R}}
\nwc{\tV}{\widetilde{V}}
\nwc{\tW}{\widetilde{W}}
\nwc{\ty}{\tilde y}
\nwc{\teta}{\tilde \eta}
\nwc{\tdelta}{\tilde \delta}
\nwc{\tlambda}{\tilde \lambda}
\nwc{\ttheta}{\tilde \theta}
\nwc{\tvartheta}{\tilde \vartheta}
\nwc{\tPhi}{\widetilde \Phi}
\nwc{\tpsi}{\tilde \psi}
\nwc{\tmu}{\tilde \mu}

\nwc{\To}{\longrightarrow} 

\nwc{\ad}{\rm ad}
\nwc{\eps}{\epsilon}
\nwc{\ep}{\epsilon}
\nwc{\vareps}{\varepsilon}

\def\ep{\epsilon}

\def\sq2{\sqrt{2}}

\def\t2{{\mathbb T}^2}
\def\s2{{\mathbb S}^2}

\def\R{\mathbb{R}}

\def\Z{\mathbb{Z}}
\def\C{\mathbb{C}}
\def\O{\mathcal{O}}

\nwc{\lap}{\bigtriangleup}
\nwc{\rest}{\restriction}
\nwc{\Diff}{\operatorname{Diff}}
\nwc{\diam}{\operatorname{diam}}
\nwc{\Res}{\operatorname{Res}}
\nwc{\Spec}{\operatorname{Spec}}
\nwc{\Vol}{\operatorname{Vol}}
\nwc{\Op}{\operatorname{Op}}
\nwc{\supp}{\operatorname{supp}}
\nwc{\Span}{\operatorname{span}}

\nwc{\dia}{\varepsilon}
\nwc{\cut}{f}
\nwc{\qm}{u_\hbar}

\def\hto0{\xrightarrow{\hbar\to 0}}

\def\rto0{\xrightarrow{r\to 0}}

\providecommand{\norm}[1]{\lVert#1\rVert}

\nwc{\la}{\langle}
\nwc{\ra}{\rangle}
\nwc{\lp}{\left(}
\nwc{\rp}{\right)}

\nwc{\bequ}{\begin{equation}}
\nwc{\be}{\begin{equation}}
\nwc{\ben}{\begin{equation*}}
\nwc{\bea}{\begin{eqnarray}}
\nwc{\bean}{\begin{eqnarray*}}
\nwc{\bit}{\begin{itemize}}
\nwc{\bver}{\begin{verbatim}}

\nwc{\eequ}{\end{equation}}
\nwc{\ee}{\end{equation}}
\nwc{\een}{\end{equation*}}
\nwc{\eea}{\end{eqnarray}}
\nwc{\eean}{\end{eqnarray*}}
\nwc{\eit}{\end{itemize}}
\nwc{\ever}{\end{verbatim}}

\makeatletter
\newlength{\temp@wc@width}
\newlength{\temp@wc@height}
\newcommand{\widecheck}[1]{%
  \setlength{\temp@wc@width}{\widthof{$#1$}}%
  \setlength{\temp@wc@height}{\heightof{$#1$}}%
  #1\hspace{-\temp@wc@width}%
  \raisebox{\temp@wc@height+2pt}[\heightof{$\widehat{#1}$}]%
     {\rotatebox[origin=c]{180}{\vbox to 0pt{\hbox{$\widehat{\hphantom{#1}}$}}}}%
}
\makeatother

\newcommand{\Volwp}[1][g]{\mathrm{Vol}_{#1}^{\mathrm{\scriptsize{WP}}}}
\newcommand{\Pwpo}{\mathbb{P}_g^{\mathrm{\scriptsize{WP}}}}
\newcommand{\Ewpo}[1][g]{\mathbb{E}_{#1}^{\mathrm{\scriptsize{WP}}}}
\newcommand{\Pwp}[1]{\Pwpo \left( #1 \right)}
\newcommand{\Ewp}[2][g]{\mathbb{E}_{#1}^{\mathrm{\scriptsize{WP}}} \Bigg[ #2 \Bigg]}
\newcommand{\Ewpon}[1][g, n]{\mathbb{E}_{#1}^{\mathrm{\scriptsize{WP}}}}

\DeclarePairedDelimiter{\paren}{(}{)}

\DeclarePairedDelimiter{\abso}{|}{|}
\DeclarePairedDelimiter{\brac}{[}{]}

\DeclarePairedDelimiter{\norminf}{\|}{\|_{\infty}}
\let\div\relax
\newcommand{\div}[1]{\paren*{\frac{#1}{2}}}

\renewcommand{\O}[2][ ]{\mathcal{O}_{#1} \left( #2 \right)}

\newcommand{\1}[1]{\mathds{1}_{#1}}

\newcommand{\av}[2][\mathrm{all}]{\langle #2 \rangle_g^{{#1}}}
\newcommand{\avQ}[2][\mathrm{all}]{\langle #2 \rangle_{g,Q}^{{#1}}}
\newcommand{\avb}[2][\mathrm{all}]{\left\langle #2 \right\rangle_g^{{#1}}}

\newcommand{\FRrem}{\mathcal{R}}

\newcommand{\eqc}[1]{[ #1 ]_{\mathrm{loc}}}

\newcommand{\x}{\mathbf{x}}
\newcommand{\y}{\mathbf{y}}

\makeatletter
\newcommand{\smallbullet}{} 
\DeclareRobustCommand\smallbullet{%
  \mathord{\mathpalette\smallbullet@{0.5}}%
}
\newcommand{\smallbullet@}[2]{%
  \, \vcenter{\hbox{\scalebox{#2}{$\m@th#1\bullet$}}} \,%
}
\makeatother

\newcommand{\Sf}{\mathbf{S}}
\newcommand{\type}{\mathbf{T}}

\newcommand{\ord}{N}

\newcommand{\curve}{\mathbf{c}}

\newcommand{\tfL}{R}
\newcommand{\Atf}{\mathrm{TF}_g^{\kappa,\tfL}}
\newcommand{\Ntang}[1][ ]{N_{\tfL #1}^{\mathrm{tang}}}
\newcommand{\Ninj}[1][ ]{N_{\kappa #1}^{\mathrm{inj}}}
\newcommand{\chic}{{\chi_+}}
\newcommand{\MC}{\mathrm{MC}}

\begin{document}

\maketitle

\begin{abstract}
  In this article, we prove that typical hyperbolic surfaces, sampled with the Weil--Petersson
  probability measure, have a spectral gap at least $2/9 - \epsilon$. This is an intermediate result
  on the way to our proof of the optimal spectral gap $1/4 - \epsilon$, building on the results of
  the first part of this series, \cite{Ours1}. A significant part of the proof is an explicit
  inclusion-exclusion argument to exclude tangles at the level of precision $1/g$.
\end{abstract}

\tableofcontents

\section{Introduction}
\label{sec:introduction}

This article is part of our series of work \cite{Ours1,Moebius,Ours2}, ultimately proving that the
spectral gap of typical hyperbolic surfaces is near-optimal, i.e.
\begin{equation*}
  \forall \epsilon > 0, \quad
  \lim_{g \rightarrow \infty}
    \Pwp{\lambda_1 \geq \frac 14 - \epsilon} = 1
\end{equation*}
where $\mathbb{P}_g^{\mathrm{WP}}$ is the Weil--Petersson probability on the moduli space of
hyperbolic surfaces of genus~$g$. This shows that typical hyperbolic surfaces have an optimal
spectral gap, due to a result of Huber \cite{huber1974}. This conclusion is reached at the end of
\cite{Ours2}, by proving a Friedman--Ramanujan type result for the volume functions associated to
any fixed type of closed geodesics.

This shorter paper shows how the results from \cite{Ours1} already allow to prove the following partial
result, namely:
\begin{thm}
  \label{thm:29}
  For any $\epsilon > 0$,
  \begin{equation*}
    \lim_{g \rightarrow \infty}
    \Pwp{\lambda_1 \geq \frac 29 - \epsilon} = 1.
  \end{equation*}
\end{thm}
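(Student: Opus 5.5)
We follow the Selberg trace formula strategy of Mirzakhani and Wu--Xue / Lipnowski--Wright, pushed one order further in $1/g$ using the results of \cite{Ours1}. Fix $\epsilon>0$ and write a putative small eigenvalue as $\lambda = \frac14 + r^2$ with $r = i\alpha$, $\alpha \in (0,\frac12)$, so that $\lambda < \frac29 - \epsilon$ corresponds to $\alpha > \frac16 + \epsilon'$. We take a test function $H_L(\ell)$, an even smooth bump of size $e^{\ell/2}$ supported on $[-L,L]$ with nonnegative Fourier transform, and $L = A\log g$ with $A$ to be chosen. On the spectral side, nonnegativity gives a lower bound: the contribution of an eigenvalue with parameter $\alpha$ is $\gtrsim e^{(\frac12+\alpha)L}$ up to polynomial factors. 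The geometric side is the identity term, of order $g$, plus the sum over closed geodesics $\sum_\gamma \frac{\ell(\gamma)}{2\sinh(\ell/2)} \hat H_L(\ell(\gamma))$. By Markov's inequality, it suffices to show that the expectation of this sum, with the ``main'' term $\int \hat H_L(\ell)\,\frac{\sinh^2(\ell/2)}{\ell}\cdots$ subtracted (which cancels against the continuous/trivial-eigenvalue contribution), is $O(g^{1+\delta}L^{C}e^{\ell/2}\cdot\ldots)$ small compared with $e^{(\frac12+\alpha)L}$.

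The key steps are as follows. First, we restrict to tangle-free surfaces. On the event $\Atf$ that no embedded subsurface of Euler characteristic $\geq -\chi$ has boundary length $\leq \kappa \log g$, which has probability $1-O(g^{-\eta})$ by the tangle-free estimates of Monk--Thomas, all geodesics of length $\le L$ fill subsurfaces of bounded Euler characteristic. Second, we expand the expected geodesic sum using Mirzakhani's integration formula and the asymptotic expansion of volumes from \cite{Ours1}: $\Ewp{\sum_\gamma F(\ell(\gamma))} = \sum_{k} g^{-k} \int F(\ell)\, V_k(\ell)\, d\ell + O(g^{-2}\cdots)$. The order-$0$ term is the Mirzakhani--Petri term, and the order-$1/g$ term is the Friedman--Ramanujan function $V_1$ computed in \cite{Ours1}: $V_1(\ell) = p(\ell)e^{\ell} + O(\ell^C e^{\ell/2})$, where $p$ is polynomial. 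Third, the $e^{\ell}$ parts at orders $g^0$ and $g^{-1}$ are annihilated by the choice of $H_L$: we take $H_L$ as a derivative, so that $\hat H_L$ integrated against $\ell^j e^{\ell/2}$ vanishes, in the spirit of \cite{Ours1}. What remains is $O(g^{-1}L^Ce^{L})$ from the remainder term at order $1/g$, and $O(g^{-2}e^{2L})$-type contributions from order $g^{-2}$ terms, which are bounded crudely by counting.

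The exponents then balance as follows. The spectral side gives $e^{(\frac12+\alpha)L}$. The geometric error terms are $g$ (from the identity term) and $g^{-1}e^{L}$ and $g^{-2}e^{3L/2}$ (the crude second-order bound, after the $e^{-\ell/2}$ weight). Optimizing over $L = A \log g$ yields a threshold at $\alpha = \frac16$, i.e.\ $\lambda \ge \frac14-\frac1{36} = \frac29$. We would verify this arithmetic by equating $(\frac12+\alpha)A$ with $\max(1,\,A-1,\,\frac32A-2)$ at $A=4$. The main obstacle is the inclusion-exclusion step: the expansion from \cite{Ours1} is valid only on tangle-free surfaces, yet the indicator $\1{\Atf}$ must be handled at precision $1/g$, not just $o(1)$. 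We would expand $\1{\Atf}$ as an alternating sum over embedded small-boundary subsurfaces, keep the first correction term exactly (contributing at order $1/g$, again with Friedman--Ramanujan structure), and bound the rest by $O(g^{-2+\delta})$ using Mirzakhani's volume estimates.
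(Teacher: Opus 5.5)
You have the paper's architecture: the trace formula at scale $L=A\log g$, Markov's inequality, restriction to tangle-free surfaces, Friedman--Ramanujan cancellation at orders $g^0$ and $g^{-1}$ via a $\mathcal{D}^m$-type test function, crude bounds at order $g^{-2}$, and a first-order inclusion--exclusion for tangles. However, the two places where the actual content lies do not work as written.

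First, your exponent count does not produce $2/9$. A test function with nonnegative Fourier transform satisfies $|H(\ell)|\le H(0)$, so it cannot grow like $e^{|\ell|/2}$. The paper uses $H_{L,m}=\mathcal{D}^m h_L$ with $h_L(\ell)=h(\ell/L)$, so an exceptional eigenvalue contributes $\lambda_1^m\hat h_L(r_1)\gtrsim\lambda_1^m e^{\alpha L}$. In this normalisation:
the identity term is $O(g(\log g)^2)$;
the Friedman--Ramanujan parts at orders $g^0$ and $g^{-1}$ contribute only $g^{O(\kappa)}(\log g)^{O(1)}$ per local type after cancellation;
everything at order $g^{-2}$ is $O(g^{O(\kappa)}e^{(1+\epsilon)L/2}/g^2)$. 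This includes geodesics filling surfaces of absolute Euler characteristic $\ge2$, handled by Wu--Xue counting and the rank bound.
One therefore needs $\alpha A>1$ and $\alpha A>A/2-2$, which balance at $A=6$, $\alpha=1/6$. Your list $g$, $g^{-1}e^{L}$, $g^{-2}e^{3L/2}$ against $e^{(1/2+\alpha)L}$ mixes normalisations. At $A=4$ it forces $\alpha>1/2$, and at $A=2$ it would give $\alpha>0$, i.e.\ the optimal $1/4$. Also, since $\hat H_{L,m}(r_1)=\lambda_1^m\hat h_L(r_1)$, you must discard $\lambda_1\le\kappa$ separately, using Mirzakhani's result that $\Pwp{\lambda_1\le\kappa}\to0$ for $\kappa<0.002$.

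Second, you misidentify the role of the tangle-free hypothesis. The unconditional order-$1/g$ density $\sum_{\chi(\type)\le1}f_1^{\type}$ is \emph{not} Friedman--Ramanujan; this is exactly the obstruction of \cite[Section 9]{Ours1}, so your second step is false as stated. Nor is tangle-freeness what restricts to bounded Euler characteristic; that is an expectation bound needing no hypothesis. What $\Atf$ buys is \cref{lem:TF_curves}: on it, geodesics of length $\le 6\log g$ filling a pair of pants or a once-holed torus fall into only $(\log g)^{O(1)}$ local types, so summing per-type Friedman--Ramanujan estimates costs only $g^{\epsilon}$. That lemma needs every boundary length $\ge\kappa$, since otherwise winding numbers around short boundaries are unbounded. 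Hence $\kappa$-short geodesics must be excluded, and your event omits them.

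Including them changes the argument substantially. Now $1-\Pwp{\Atf}=O(\kappa^2+g^{3\kappa/2-1})$ does not tend to $0$, so one only gets $\limsup\le O(\kappa^2)$ and must let $\kappa\to0$. Moreover, $\1{\Ninj=0}$ cannot be truncated at finite order with a crude remainder bound, because that bound is only $\kappa^{O(1)}e^{L/2}\gg g$. It must be expanded as the full alternating series over short multicurves, with bounds uniform in the number $j$ of loops. This requires truncating at $j\le\log g$, discarding surfaces where short multicurves cut into more than $Q$ pieces, and volume expansions uniform in the number of boundary components. It also requires explicit computations showing the conditioned densities stay Friedman--Ramanujan when $\gamma$ crosses a short loop (the integral $J_\kappa$), and that other crossing configurations are negligible. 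Your first-order truncation is fine only for the tangle indicator, where the $\Ntang[,2]$ remainder is $O(g^{19\kappa-2})$ relative to $\|F(\ell)e^{\ell}\|_\infty$.
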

This beats the previous record of $3/16-\epsilon$ by Wu--Xue \cite{wu2022} and
Lipnowski--Wright~\cite{lipnowski2021}, but does not yet reach the optimal~$1/4 - \epsilon$
proven in the follow-up article~\cite{Ours2}. The reason behind this incremental progress is that
we perform asymptotic expansions in powers of $1/g$, the $3/16$ result corresponding to the
leading order $1/g^0$, the~$2/9$ to the second order $1/g$, and the $1/4$ to arbitrary
orders $1/g^N$.

We prove \cref{thm:29} by a classic trace method, as sketched in \cite[Section~3]{Ours1}, performing
all computations at the second order in the expansion in powers of $1/g$. Our computations are
explicit and we gain a precise understanding of the problem at this level of precision.

Since this article is in direct continuation with \cite{Ours1}, we will not re-introduce the
notations and results it contains. The main novel challenge here is that we need to make a
probabilistic hypothesis in order to \emph{remove tangles}, which requires to adapt the methods
developed in \cite{Ours1} to more general averages that include an indicator function. This work is
done in Sections~\ref{sec:incl-excl-tangle} to \ref{sec:reduct-numb-topol}. We then conclude to the
proof of \cref{thm:29} in \cref{sec:proof_29}.

In the final proof of the optimal result \cite{Ours2}, we use a different, more abstract approach to
the removal of tangles, based on the Moebius formula \cite{Moebius}. We believe it is interesting to
present these different approaches to remove tangles, and how this removal can be done explicitely
at this level of precision.

\subsection*{Acknowledgements}

The authors would like to express their gratitude to Joel Friedman, for explaining his proof of
Alon’s conjecture to us in detail, which lead to significant advances in our project. We would also
like to thank Yuhao Xue for useful comments on a first version of the paper. This research was funded by
the guest program of the Max-Planck Institute for Mathematics during the year 2021-2022, the EPSRC
grant EP/W007010/1 since 2022, the prize L’Oréal-UNESCO Young Talents France for Women in Science,
and by the European Research Council (ERC) under the European Union’s Horizon 2020 research and
innovation programme (Grant agreement No. 101096550).

\section{The tangle-free hypothesis and its geometric implications}
\label{sec:incl-excl-tangle}

In \cite[Section 3]{Ours1}, we present a detailed trace method, which is the method we follow in
this article. We relate the spectral gap problem to the question of proving that the expansion
$(f_k^\type)_{k \geq \chi(\type)}$ of the volume functions $V_g^\type$ satisfy the \emph{Friedman--Ramanujan
  property}.  We prove that:
\begin{itemize}
\item for any local topological type $\type$ with $\chi(\type) \leq 1$, any $k \geq \chi(\type)$,
 the function $\ell \mapsto \ell f_k^\type(\ell)$ is a Friedman--Ramanujan function in the weak
 sense (see \cite[Theorem 1.13]{Ours1});
\item however, the sum $\sum_{\type : \chi(\type) \leq 1} f_1^\type(\ell)$ is not a
  Friedman--Ramanujan function in the weak sense (see \cite[Section 9]{Ours1}).
\end{itemize}
We prove the second point by exhibiting a subset of surfaces of genus $g$, which are \emph{tangled},
and hence have a small spectral gap. We prove that the probability of this set of surfaces only
decays like $1/g$ as $g \to \infty$. In particular, we observe that
even though these surfaces have vanishing
probability, they contribute large contributions to the averages over the whole moduli space,
due to their exponential proliferation of closed geodesics. This
motivates the need to make a probabilistic hypothesis, removing some ``bad''
surfaces containing many geodesics. Let us introduce a set of ``good surfaces''.

\begin{nota}
  \label{nota:Ninj_Ntang}
  Let $0 < \kappa < 1$. Let us consider the counting functions
  \begin{align*}
    \Ninj(X)
    & = \# \{ \beta \in \mathcal{G}(X)  : \, \ell_X(\beta) \leq \kappa \}
    \\
    \Ntang(X)
    & = \# \{ Y \text{ embedded in } X  : \, \chi(Y) = 1,
      \ell_X^{\mathrm{max}}(\partial Y) \leq \tfL \},
  \end{align*}
  where:
  \begin{itemize}
  \item $\mathcal{G}(X)$ is the set of primitive oriented closed geodesics on $X$, and for $\beta
    \in \mathcal{G}(X)$, $\ell_X(\beta)$ denotes its length;
  \item any embedded subsurface $Y$ is assumed to have geodesic boundary;
  \item $\chi(Y) > 0$ denotes the absolute Euler characteristic of $Y$ (i.e. the absolute value of its
    Euler characteristic);
  \item $\ell_X^{\mathrm{max}}(\partial Y)$ denotes the length of the longest boundary component of
    $Y$;
  \item in $\Ntang$, amongst pair of pants, we only count those with three boundary components
    forming a multi-curve, i.e. we exclude the possibility that two components are glued into a
    once-holed torus (because this once-holed torus is already counted once in $\Ntang$).
  \end{itemize}
  We denote as $\Atf$ the set of hyperbolic surfaces of genus $g$ such
  that $\Ninj = \Ntang = 0$. We call $\kappa$-\emph{short loops} the closed
  geodesics counted by $\Ninj$, and $\tfL$-\emph{tangles} the embedded
  surfaces counted by $\Ntang$.
\end{nota}

\begin{rem}
  The same set of good surfaces appears in \cite{lipnowski2021}, and making the probabilistic
  assumption that $X \in \Atf$ is a crucial step of Lipnowski--Wright's proof of the
  $3/16 - \epsilon$ spectral gap result.
\end{rem}

{
For us, the parameter $\kappa$ will be an arbitrary small fixed positive number, and we will take
$\tfL = \kappa \log(g)$. The probability of $\Atf$ can then be estimated directly using
\cite[Theorem 4.2]{mirzakhani2013} and \cite[Theorem 5]{monk2021a}.
}

\begin{lem}
  \label{lem:prob_TF}
  For any small enough $\kappa > 0$, any large enough $g$, $\tfL = \kappa \log (g)$,
  \begin{equation*}
    1 - \Pwp{\Atf} = \O{\kappa^2 + g^{\frac 3 2 \kappa-1}}.
  \end{equation*}
\end{lem}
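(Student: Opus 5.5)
We bound the complement of $\Atf$ by a union bound:
\begin{equation*}
  1 - \Pwp{\Atf} \leq \Pwp{\Ninj \geq 1} + \Pwp{\Ntang \geq 1}
  \leq \Pwp{\Ninj \geq 1} + \Ewpo\brac*{\Ntang}.
\end{equation*}
We then estimate the two terms separately.

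\textbf{Short loops.} The first term is controlled by \cite[Theorem 4.2]{mirzakhani2013}. It states that the probability that the systole is at most $\kappa$ is $\O{\kappa^2}$, uniformly in $g$. To see why, one can also argue directly: $\Pwp{\Ninj \geq 1} \leq \Ewpo[\Ninj]$, and by Mirzakhani's integration formula this expectation equals
\begin{equation*}
  \frac{1}{\Volwp} \int_0^\kappa \paren*{V_{g-1,2}(x,x) + \textstyle\sum_{i} V_{i,1}(x)V_{g-i,1}(x)} x \d x .
\end{equation*}
The volume estimates $V_{g-1,2}(x,x)/V_g = \O{1}$ (with the separating terms smaller) give $\O{\int_0^\kappa x \d x} = \O{\kappa^2}$. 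Note that this counts geodesics of all topological types, including separating ones.

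\textbf{Tangles.} For the second term we follow \cite[Theorem 5]{monk2021a}. There are two cases, according to whether $Y$ is a once-holed torus or a pair of pants.
\begin{itemize}
\item If $Y$ is a once-holed torus with boundary $\gamma$ of length $\leq \tfL$, the integration formula gives a contribution bounded by
\begin{equation*}
  \frac{1}{\Volwp}\int_0^\tfL V_{1,1}(x)\,V_{g-1,1}(x)\, x \d x .
\end{equation*}
\item If $Y$ is a pair of pants whose three boundary components form a multicurve, the complement $X\setminus Y$ may have one, two or three components. The dominant case is a connected complement $S_{g-2,3}$, which gives
\begin{equation*}
  \frac{1}{\Volwp}\int_{[0,\tfL]^3} V_{g-2,3}(x_1,x_2,x_3)\, x_1x_2x_3 \d x .
\end{equation*}
\end{itemize}
To bound these integrals we use
\begin{equation*}
  V_{g,n}(\x) \leq V_{g,n}\prod_i \frac{\sinh(x_i/2)}{x_i/2}
\end{equation*}
together with $V_{g-2,3}/V_g = \O{1/g}$. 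The integrand is then bounded by $\O{e^{(x_1+x_2+x_3)/2}}$, and integrating over $[0,\tfL]^3$ gives $\O{e^{3\tfL/2}/g}$. Similarly, $V_{g-1,1}/V_g = \O{1/g}$ gives $\O{\tfL e^{\tfL/2}/g}$ for the torus case. The disconnected-complement cases carry additional factors of $1/g$ (by the standard estimates on $\sum_i V_{i,\cdot}V_{g-i,\cdot}/V_g$), so they are dominated by the connected case.

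With $\tfL = \kappa\log g$ this yields $\Ewpo[\Ntang] = \O{g^{\frac32\kappa - 1}}$, which combined with the short-loop bound proves the lemma.

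The main obstacle is the bookkeeping over all ways the pants' complement splits, which requires uniform sum estimates for products of volumes. I would take these directly from \cite{monk2021a} (or \cite{Ours1}) rather than reprove them. A minor point is that polynomial factors such as $\tfL$ are absorbed, which holds because the exponent $\frac32\kappa$ comes from the three-boundary case and dominates the torus case.
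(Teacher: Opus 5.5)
Your proof is correct and takes essentially the paper's approach: the paper obtains the lemma directly from \cite[Theorem 4.2]{mirzakhani2013} for $\kappa$-short loops and \cite[Theorem 5]{monk2021a} for $\tfL$-tangles, and your union bound with a first-moment computation via Mirzakhani's integration formula is the standard argument behind those citations. One slip is harmless: since $V_{1,1}(x)$ grows like $x^2$, the once-holed torus term is $\O{\tfL^2 e^{\tfL/2}/g}$ rather than $\O{\tfL e^{\tfL/2}/g}$, which is still dominated by $g^{\frac32\kappa-1}$.
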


We now prove that, under the probability assumption $X \in \Atf$, the number of
geodesics of length $\leq L := A \log(g)$ filling any pair of pants or once-holed torus is radically
reduced, from being exponential in $L$ to polynomial in $L$.

\begin{lem}
  \label{lem:TF_curves}
  Let $0 < \kappa < 1$, $A \geq 1$. For any large enough $g$, if we set $\tfL = \kappa \log(g)$ and
  $L = A \log(g)$, then there exists a set $\mathrm{Loc}_{g}^{\kappa,A}$ of local topological types
  such that:
  \begin{itemize}
  \item for any
  $X \in \Atf$, any primitive closed geodesic of length $\leq L$ filling a surface of absolute Euler
  characteristic~$1$ is locally equivalent to a local type in $\mathrm{Loc}_{g}^{\kappa,A}$;
\item the cardinal of the set satisfies
      \begin{equation}
    \label{eq:bound_number_type_TF}
    \# \mathrm{Loc}_{g}^{\kappa, A} = \O[\kappa, A]{(\log g)^{c_{\kappa,A}}}
  \end{equation}
  for a constant $c_{\kappa, A} > 0$ depending only on $\kappa$ and $A$.
  \end{itemize}
\end{lem}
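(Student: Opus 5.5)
The key observation is that on $X \in \Atf$, any embedded surface $S$ of absolute Euler characteristic $1$ filled by a geodesic $\gamma$ of length $\leq L$ has boundary of total length at most $2L$. Since $S$ is not an $\tfL$-tangle, its longest boundary component has length $> \tfL = \kappa\log g$. Moreover, its shortest boundary component has length $> \kappa$, since there are no $\kappa$-short loops. The plan is to show that these constraints force $\gamma$ to be described by a word of bounded combinatorial complexity, with only $\O[\kappa,A]{\log g}$ freedom in a bounded number of integer parameters. We then let $\mathrm{Loc}_{g}^{\kappa,A}$ be the set of local types so obtained. The first bullet then holds by construction, and the count \eqref{eq:bound_number_type_TF} follows from the count of parameters.

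Concretely, we treat the pair of pants case and the once-holed torus case separately, the latter reducing to the former by cutting along a simple closed geodesic. For a pair of pants $P$ with boundary lengths $(\ell_1,\ell_2,\ell_3)$, we decompose $P$ into two right-angled hexagons. We encode $\gamma$ through its successive crossings of the three seams, which are the orthogeodesics between boundary components. Between crossings, $\gamma$ winds around the boundary components, and we record a cyclic word with winding numbers $n_i$ around the cuffs; the number of seam-crossing segments is $m$. The local type of $\gamma$ is determined by this cyclic word.

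The core of the argument is a geometric lower bound. Each segment of $\gamma$ between two seam crossings has length $\gtrsim$ the distance between the corresponding seams. Each winding around cuff $i$ costs roughly $\ell_i$ plus a contribution from the collar. We aim for an estimate of the form
\begin{equation*}
  \ell(\gamma) \geq c_\kappa \Big( m\, \max_i \ell_i \cdot \mathbf{1}_{\text{long seam}} + \sum_j |n_j|\, \ell_{i_j} \Big).
\end{equation*}
Here the key input is that the longest cuff has length $\geq \kappa \log g$, which makes some hexagon side long. More precisely, I would use the hexagon trigonometry to show the following. Since $\ell_{\max}>\tfL$ and all $\ell_i>\kappa$, either $\gamma$ must cross a seam of length $\gtrsim \tfL$, or it must travel along the long cuff, each time it passes "around" that cuff. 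This gives $m \lesssim L/\tfL = A/\kappa$, a bounded number of crossings. The winding numbers satisfy $|n_j| \lesssim L/\kappa$, since each turn costs at least about $\ell_i > \kappa$. So the combinatorial word has length $O(A/\kappa)$ in its letters, with $O(A/\kappa)$ integer exponents each bounded by $O(A\kappa^{-1}\log g)$. This yields at most $C^{A/\kappa}(A\kappa^{-1}\log g)^{O(A/\kappa)}$ local types, which is \eqref{eq:bound_number_type_TF}.

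The main obstacle is making the bound $m \lesssim A/\kappa$ rigorous. Crossing a seam between two short cuffs is cheap, so the inequality is not simply "each crossing costs $\tfL$." The point I would exploit is that filling $P$ forces $\gamma$ to visit all three boundary-adjacent regions. Because $\gamma$ must turn around each cuff between consecutive passages, every "cycle" of the word past the long cuff costs at least about $\ell_{\max}/2 - O(1)$. This can be seen by a lifting argument in $\IH$: consecutive crossings of the lifts of the long cuff's seam edges are separated by a distance comparable to half the long boundary. I would first try to establish this via the collar and hexagon formulas, $\cosh$ of a side expressed through the opposite angles. I would handle the once-holed torus similarly, cutting along a shortest nonseparating curve of length $>\kappa$.
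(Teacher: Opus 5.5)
Your overall architecture is right: encode $\gamma$ by a cyclic word with $O(A/\kappa)$ syllables whose exponents are $O(L/\kappa)$. The first bound uses $\ell_{\max}>\tfL$ (no tangles); the second uses cuff lengths $\geq\kappa$ (no short loops) and projection onto a core. But the step you yourself flag as the main obstacle, the bound on the number of syllables, is not proved, and the mechanism you suggest for it is wrong as stated.

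As you observe, seam crossings can be cheap. A geodesic winding $n$ times around a short cuff inside its collar crosses the two seams issuing from that cuff $n$ times each, at cost about $n$ times that cuff's length. So your displayed lower bound containing $m\max_i\ell_i\mathbf{1}_{\text{long seam}}$ has no justification.

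Nor is it true that each passage ``past the long cuff'' costs about $\ell_{\max}/2$. Suppose $\ell(b_2)=\ell(b_3)$ is large and $\ell(b_1)$ is bounded. Let $\alpha$ be the orthogeodesic from $b_3$ to itself separating $b_1$ from $b_2$. It cuts $b_3$ into arcs of lengths $t_1=O(1)$ and $t_2=\ell(b_3)-O(1)$. The shortest arc in the annulus around $b_1$ going from $\alpha$ back to $\alpha$, not homotopic into $\alpha$, therefore has length only $t_1=O(1)$. What is controlled is not each excursion but each consecutive pair of excursions.

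The missing idea, which is how the paper argues, is to cut along this single orthogeodesic $\alpha$ rather than along the three seams. Take $b_3$ to be the longest cuff; then $P\setminus\alpha$ is a union of two annuli $C_1,C_2$ around $b_1,b_2$. Since $\gamma$ fills $P$ it is not simple, so it meets $\alpha$, transversally and an even number $2k$ of times. It therefore splits into arcs $\gamma_1,\dots,\gamma_{2k}$ lying alternately in $C_1$ and $C_2$.

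No $\gamma_i$ is homotopic into $\alpha$ (endpoints sliding along $\alpha$). The arc of $b_3$ of length $t_{\bar\imath}$ is perpendicular to $\alpha$ at both ends, hence is the shortest arc with this property, so $\ell(\gamma_i)\geq t_{\bar\imath}$. Summing, $L\geq\ell(\gamma)\geq k(t_1+t_2)=k\,\ell(b_3)\geq k\tfL$, i.e.\ $k\leq A/\kappa$. No hexagon trigonometry is needed.

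Write $\pi_1(P)$ as the free group on loops $a_1,a_2$ based on $\alpha$. Each $\gamma_i$ is homotopic to $a_{\bar\imath}^{m_i}$, and your projection argument gives $|m_i|\leq 1+\lceil L/\kappa\rceil$. This leaves $(\log g)^{O(A/\kappa)}$ possible words $a_1^{m_1}a_2^{m_2}\cdots a_2^{m_{2k}}$.

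Separately, cutting the once-holed torus along a simple closed geodesic $\beta$ does not literally reduce to the pants case. After cutting, $\gamma$ becomes $i(\gamma,\beta)$ arcs rather than a closed geodesic filling the pants, and you have not bounded $i(\gamma,\beta)$.
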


\begin{proof}
  Let $X \in \Atf$, and let $\gamma$ be a primitive closed geodesic on $X$ of length $\leq L$
  filling a pair of pants $Y$.  Let $b_1, b_2, b_3$ denote the three boundary components of
  $Y$, ordered in non-decreasing length. By definition of $\Atf$,
  \begin{equation*}
    \ell_X(b_1), \ell_X(b_2) \geq \kappa \quad
    \mathrm{and} \quad
    \ell_X(b_3) \geq \tfL.
  \end{equation*}
  Let $\alpha$ denote the simple orthogeodesic of $Y$ from $b_3$ to itself. We fix a base point
  $x_0$ on~$\alpha$, and $a_1$,~$a_2$ two simple paths based at $x_0$, rotating around $b_1$ and
  $b_2$ respectively, so that $\pi_1(Y)$ is the free group generated by $a_1$ and $a_2$
  (see \cref{fig:TF_curves_pop}).

  \begin{figure}[h]
    \centering
    \includegraphics[scale=0.4]{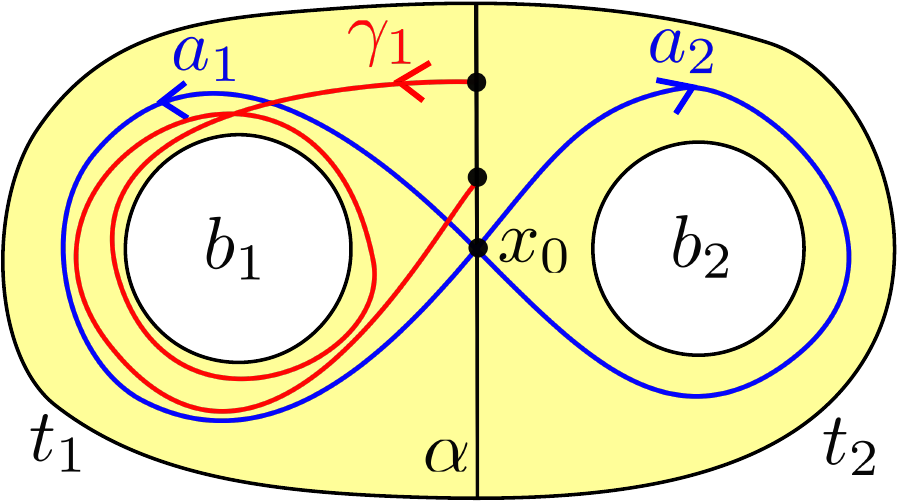}
    \caption{Illustration of the proof of \cref{lem:TF_curves} for pair of
      pants.}
    \label{fig:TF_curves_pop}
  \end{figure}

  The geodesic $\gamma$ fills $Y$, and is in particular not simple. As a consequence, it must
  intersect $\alpha$ (because $Y \setminus \alpha$ is a union of two cylinders). All of its
  intersections are transversal, because $\gamma$ cannot coincide with $\alpha$ on an interval. The
  number of these intersections must be even because $\alpha$ separates $Y$ into two connected
  components. We can therefore write $\gamma$ as the concatenation
  $\gamma_1 \smallbullet \gamma_2 \smallbullet \ldots \smallbullet \gamma_{2k}$, where for every
  $i$, $\gamma_i$ is a geodesic path intersecting $\alpha$ exactly at its endpoints, lying entirely
  on the connected component of $Y \setminus \alpha$ containing $b_{\bar{\imath}}$ for
  $\bar{\imath} \in \{1,2\}$ of the same parity as $i$.

  The path $\alpha$ cuts $b_3$ into two geodesic arcs, of respective lengths $t_1$ and $t_2$. For
  all~$i$, the geodesic path $\gamma_{i}$ is a path from $\alpha$ to itself on the connected
  component $C_{\bar{\imath}}$ of $Y$ containing~$b_{\bar{\imath}}$, that cannot be homotoped to a
  sub-path of $\alpha$ (by minimality of the length of $\gamma$ in its homotopy class). As a
  consequence, its length is greater than the length $t_{\bar{\imath}}$ of the path minimising this
  property. We deduce that
  \begin{equation*}
    L \geq 
    \ell_X(\gamma) =  \sum_{i=1}^{2k} \ell(\gamma_i) \geq k(t_1+t_2)
    \geq k \, \tfL,
  \end{equation*}
  which implies that $k \leq L/\tfL = A/\kappa$.

  For all $i$, $\gamma_i$ is homotopic with endpoints gliding along $\alpha$ to
  $a_{\bar{\imath}}^{m_i}$ for an integer $m_i \in \Z$. The component $C_{\bar{\imath}}$ is
  isometric to part of a hyperbolic cylinder of core of length $\geq \kappa$. Projecting the
  geodesic path $\gamma_i$ on the core $b_{\bar{\imath}}$ following Fermi coordinates decreases its
  length, and in doing so we obtain a path on $b_{\bar{\imath}}$ rotating at least $\abso{m_i}-1$
  times around it. It follows that
  \begin{equation*}
    L \geq \ell(\gamma_i) \geq (\abso{m_i}-1) \kappa,
  \end{equation*}
  which implies that
  $\abso{m_i} \leq M := 1 + \lceil L/\kappa \rceil$.

  To conclude, we have proven that $\gamma$ is freely homotopic to
  $a_{1}^{m_1} \smallbullet a_2^{m_2} \smallbullet \ldots \smallbullet a_1^{m_{2k-1}} \smallbullet
  a_2^{m_{2k}}$ for $k := \lceil A/\kappa \rceil$ and a family of integers $(m_i)_{1 \leq i \leq k}$
  such that $\abso{m_i} \leq M$ for all $i$. We put all of those local topologies in the set
  $\mathrm{Loc}_g^{\kappa,A}$, and note that the number of such configurations is bounded by
  \begin{equation*}
    (2M+1)^{2k} = \O[\kappa,A]{(\log g)^{\lceil 2A/\kappa \rceil}}.
  \end{equation*}

  The proof in the case of a once-holed torus is similar.
\end{proof}

\section{Inclusion-exclusion argument}
\label{sec:inclusion-exclusion}

In order to be able to ``make the assumption $X \in \Atf$'', we need to be able to compute
expectations of functions in which the indicator function
$\1{\Atf} = \1{\Ninj = 0} \times \1{\Ntang = 0}$ appears. A good way to achieve this
is to write this indicator function in terms of counting functions, using an inclusion-exclusion
(such an argument also appears in Friedman's proof of Alon's conjecture~\cite{friedman2003}). The
hope is that these counting functions can then be expressed in terms of geometric functions and
integrated using Mirzakhani's integration formula.

The indicator function $\1{\Ninj = 0}$ can be rewritten in the following way, as
done by Mirzakhani~\cite{mirzakhani2013} and
Lipnowski--Wright~\cite{lipnowski2021}.

\begin{nota}
  For $j \geq 1$, we define
  \begin{equation*}
    \Ninj[,j](X)
    = \# \{ \text{unordered families } \{\beta_1, \ldots, \beta_j\}
    \text{ of distinct } \kappa\text{-short-loops on } X\}.
  \end{equation*}
\end{nota}

\begin{lem}
  \label{lem:inc_excl_inj}
  For any small enough $\kappa>0$, for any $j \geq 1$, all families of geodesics
  counted by $\Ninj[,j]$ are made of simple and disjoint geodesics, and 
  \begin{equation}
    \label{eq:incl_excl_inj}
    \1{\Ninj = 0}
    = 1 - \sum_{j=1}^{+ \infty} (-1)^{j+1} \Ninj[,j]
    = \sum_{j=0}^{+ \infty} (-1)^{j} \Ninj[,j]
  \end{equation}
  with the convention that $\Ninj[,0]=1$.
\end{lem}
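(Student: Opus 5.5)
The proof has two independent parts: a geometric statement about short geodesics, which is classical, and a combinatorial identity, which holds pointwise on each surface $X$.

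\emph{Geometric part.} We use the collar lemma. There is a universal constant $\kappa_0>0$ (one can take $\kappa_0 = 2\,\mathrm{arcsinh}(1)$) such that on any closed hyperbolic surface, any closed geodesic of length $\leq \kappa_0$ is simple. Moreover, two distinct closed geodesics of length $\leq \kappa_0$ are disjoint. The argument is standard: a simple closed geodesic $\beta$ of length $\ell$ has an embedded collar of half-width $w(\ell)=\mathrm{arcsinh}(1/\sinh(\ell/2))$. Any geodesic crossing $\beta$ must traverse this collar, so it has length at least $2w(\ell)$, and $2w(\ell) > \kappa_0$ when $\ell \leq \kappa_0$. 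A non-simple closed geodesic has length at least $4\,\mathrm{arcsinh}(1)$, since it contains a geodesic loop bounding no disc, and such loops are long. We fix $\kappa < \kappa_0$. Then every family counted by $\Ninj[,j]$ consists of simple, pairwise disjoint geodesics. The only subtlety is orientation, since $\mathcal{G}(X)$ consists of oriented geodesics. In that case we read "distinct" as distinct as unoriented curves, or else apply the same argument to the set of oriented short loops. The inclusion-exclusion identity below holds for whichever finite set we count, so the choice does not affect it.

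\emph{Combinatorial part.} Fix $X$, and let $S$ be the (finite) set of $\kappa$-short loops on $X$, with $n=\#S=\Ninj(X)$. By definition $\Ninj[,j](X)=\binom{n}{j}$, which vanishes for $j>n$, so the series is a finite sum. Then
\begin{equation*}
  \sum_{j\geq 0}(-1)^j \Ninj[,j](X) = \sum_{j=0}^{n}(-1)^j\binom{n}{j} = (1-1)^n,
\end{equation*}
which equals $1$ if $n=0$ and $0$ otherwise, that is, $\1{\Ninj=0}(X)$. The first equality in \eqref{eq:incl_excl_inj} is just a rewriting, obtained by separating the $j=0$ term.

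\emph{Main point of care.} The only genuine input is the choice of $\kappa$ below the collar threshold. Finiteness of $S$ is automatic on a compact surface. In fact the collar lemma gives the uniform bound $n\leq 3g-3$, which matters when these identities are later integrated over moduli space term by term. I would record that bound here for use in the subsequent lemmas.
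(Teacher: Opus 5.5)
Your proposal is correct and follows the same route as the paper. The identity is the pointwise binomial computation $\sum_{j=0}^{n}(-1)^j\binom{n}{j}=(1-1)^n$ with $n=\Ninj(X)$, and the simplicity and disjointness of the $\kappa$-short loops come from the Collar Lemma (the paper simply cites Buser's Theorem~4.1.6); your explicit threshold $\kappa_0=2\,\mathrm{arcsinh}(1)$ and your caveat that, since $\mathcal{G}(X)$ consists of oriented geodesics, ``disjoint'' must be read up to orientation ($\beta$ versus $\beta^{-1}$) are accurate refinements that leave the identity unaffected.
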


\begin{proof}
  We simply observe that, by the binomial theorem,
  \begin{equation*}
    \forall n \geq 0, \quad
    \sum_{j=0}^{n} (-1)^{j} \binom{n}{j}
    = 
    \begin{cases}
      1 & \text{if } n=0 \\
      0 & \text{otherwise.}
    \end{cases}
  \end{equation*}
  The fact that the geodesics are simple and disjoint for small enough $\kappa$
  is a classic application of the Collar Lemma \cite[Theorem 4.1.6]{buser1992}.
\end{proof}

However, there is no reason to believe that all embedded surfaces contributing to $\Ntang$ are
disjoint. For this reason, it is a priori better to treat the indicator function of $\Ntang = 0$
differently, to avoid a tedious enumeration of cases. Luckily, the following easy
inclusion-exclusion is enough for proving \cref{thm:29}.

\begin{nota}
  We define
  \begin{equation*}
    \Ntang[,2]
    := \# \{ \text{unordered pairs } \{Y, Y'\} \text{ of distinct }
    \tfL\text{-tangles on } X \}.
  \end{equation*}
\end{nota}

\begin{lem}
  \label{lem:inc_excl_tang}
  For any $\kappa >0$ and $g \geq 2$, $\tfL = \kappa \log(g)$,
  \begin{equation*}
    \1{\Ntang = 0} = 1 - \Ntang + \O{\Ntang[,2]}.
  \end{equation*}
\end{lem}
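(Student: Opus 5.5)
Writing $n := \Ntang(X) \in \IN$, the statement is a pointwise inequality in terms of $n$ alone, since $\Ntang[,2] = \binom{n}{2}$. The plan is to prove the stronger, explicit claim that
\begin{equation*}
  0 \leq \1{\Ntang = 0} - \paren*{1 - \Ntang} \leq \Ntang[,2]
\end{equation*}
for every surface $X$. The $\mathcal{O}$ then holds with implied constant $1$, uniformly in $\kappa$, $g$ and $X$.

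First, $n$ must be finite for this to make sense. By \cref{lem:TF_curves}-type arguments, or simply by discreteness of the length spectrum, there are finitely many multicurves with all components of length $\leq \tfL$. Each embedded subsurface with $\chi(Y)=1$ and geodesic boundary is determined by its boundary multicurve together with a choice of side, so finitely many such $Y$ exist. Hence $n < \infty$, and the identity $\Ntang[,2] = \binom{n}{2}$ is just the definition of counting unordered pairs of distinct tangles.

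Then the statement reduces to a case analysis on $n$:
\begin{itemize}
\item If $n=0$, the left-hand side is $1-1=0$.
\item If $n=1$, it is $0-(1-1)=0$.
\item If $n\geq 2$, it is $0-(1-n)=n-1$. We need $0\le n-1\le \binom n2 = n(n-1)/2$, which holds since $n/2\ge 1$.
\end{itemize}
This is the second Bonferroni inequality, i.e. truncation of $\sum_j (-1)^j\binom nj$ after two terms, in the spirit of the proof of \cref{lem:inc_excl_inj}.

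There is no real obstacle here. The only point requiring care is the finiteness of $\Ntang$, so that the identity holds pointwise rather than formally. Note that no disjointness of the tangles is needed, which is exactly why this crude two-term truncation is used instead of a full expansion like \eqref{eq:incl_excl_inj}.
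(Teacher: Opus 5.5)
Your proof is correct and is the paper's argument: the paper proves the pointwise inequality $0 \leq \Ntang - \1{\Ntang \geq 1} \leq \Ntang[,2]$, which is your inequality rearranged, using the same case split ($n \in \{0,1\}$ versus $n \geq 2$ with $n-1 \leq \binom{n}{2}$). Your check that $\Ntang$ is finite is a harmless addition that the paper leaves implicit.
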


\begin{proof}
  The bound is a direct consequence of the inequality
  $ 0 \leq \Ntang - \1{\Ntang \geq 1} \leq \Ntang[,2]$, which is trivially true
  when $X$ contains $0$ or $1$ tangles, and otherwise true because 
  \begin{equation*}
    \forall n \geq 2, \quad 0 \leq n - 1 \leq \frac{n(n-1)}{2} = \binom{n}{2}.
  \end{equation*}
\end{proof}

  \section{Reduction of the number of terms to compute}
\label{s:reduction}

In this subsection we perform some operations which allow us to reduce the number of terms that need
to be computed when performing the inclusion-exclusion.

\subsection{Expectation against $\Ntang[,2]$}
\label{sec:expect-against-ntang}

The following lemma will allow us to discard any expectation in which $\Ntang[,2]$ appears when
proving the $2/9 - \epsilon$ spectral gap result.  This is the reason why the inclusion-exclusion
formula in \cref{lem:inc_excl_tang} will be sufficient for our purposes: the error term will be
negligible. A more careful inclusion-exclusion is necessary to reach the optimal spectral gap
$1/4 - \epsilon$.

\begin{lem}
  \label{lem:incl_exc_work_tang}
  For any $0 < \kappa < 1$, any large enough $g$, $L := 6 \log(g)$, any test function~$F$ supported
  on $[0,L]$, we have
  \begin{equation*}
    \Ewp{\Ntang[,2](X) \sum_{\substack{\gamma \in \mathcal{G}(X) }}
      F(\ell_X(\gamma)) }
    = \O[\kappa]{\frac{\norminf{F(\ell) e^\ell}}{g^{2-19 \kappa}}}.
  \end{equation*}
\end{lem}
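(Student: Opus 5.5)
We bound the geodesic sum crudely and spend the effort on the probability of having two tangles. We do not control the geodesic sum through its topology. Instead we use the deterministic prime geodesic bound $\#\{\gamma \in \mathcal{G}(X) : \ell_X(\gamma) \leq T\} = \O{g e^{T}}$, valid uniformly for $X$ of genus $g$ (e.g. \cite[Lemma 6.6.4]{buser1992}; for $T \geq 1$ the constant is uniform). Dyadically decomposing $[0,L]$, or summing over unit intervals $[n,n+1]$, gives
\begin{equation*}
  \Big|\sum_{\gamma} F(\ell_X(\gamma))\Big| \leq \norminf{F(\ell)e^{\ell}} \sum_{n \leq L} e^{-n}\,\#\{\gamma : n \leq \ell_X(\gamma) \leq n+1\} = \O{g\,L\,\norminf{F(\ell)e^{\ell}}}.
\end{equation*}
Since $L = 6\log g$, it therefore suffices to prove that $\Ewpo[\Ntang[,2]] = \O[\kappa]{g^{-3+19\kappa-\eta}}$ for some small $\eta$ that absorbs the $\log g$. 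Equivalently, the target is $\Ewpo[\Ntang[,2]] = \O[\kappa]{g^{-3+18\kappa}(\log g)^{C}}$. If the short geodesics make the count too crude, we use the fact that the relevant counting bound on the thick part only needs injectivity radius $\geq \kappa/2$. We can then add $\1{\Ninj=0}$ cheaply, or note that the $\O{g e^T}$ bound involves no injectivity condition once lengths are at least $1$.

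The main step is to estimate $\Ewpo[\Ntang[,2]]$ by Mirzakhani's integration formula, classifying each pair $\{Y,Y'\}$ by the topology of $Y \cup Y'$. First, two distinct tangles with boundaries of length $\leq \tfL$ cannot be too entangled. Consider the subsurface $Z$ filled by $Y \cup Y'$, whose geodesic boundary components have length at most $\ell(\partial Y)+\ell(\partial Y') \leq 6\tfL$. Either $Y,Y'$ are disjoint or share boundary curves, so that $\chi(Z)=2$; or they overlap essentially, in which case $Z$ has $\chi(Z) \geq 2$ and total boundary length $\leq 6\tfL$. In every case we have an embedded subsurface $Z$ with $\chi(Z) \geq 2$ and total boundary length $\leq 6\tfL$, together with at most $\O{(\log g)^C}$ ways to recover $\{Y,Y'\}$ inside $Z$. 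This bounded multiplicity follows from a Lemma~\ref{lem:TF_curves}-type argument: the boundary curves of $Y,Y'$ are short curves in $Z$, and the number of such curves of length $\leq 3\tfL$ in $Z$ is polynomial in $\log g$, since lengths are bounded by the boundary length. It then suffices to bound the expected number of embedded $Z$ with $\chi(Z)=\chi \geq 2$ and $\ell(\partial Z) \leq 6\tfL$.

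For this we apply the estimate of \cite[Theorem 5]{monk2021a}, or redo the computation via Mirzakhani's formula with $V_{g',n'}$ asymptotics. The expected number of such $Z$ is $\O{(\log g)^{C} e^{\ell(\partial Z)/2}\, g^{-\chi}}$, because the volume of the complement gives a factor $g^{-\chi(Z)}$ relative to $V_g$ and $V_{Z}(\mathbf{x})\prod x_i \leq C e^{\sum x_i/2}$. With $\ell(\partial Z) \leq 6\tfL$ and $\chi \geq 2$, we would obtain $g^{3\kappa-2}$. This gives only $g^{-2}$, not $g^{-3}$, which is a full power short. The resolution is not to take the crude $g e^{T}$ bound, but to keep $\Ntang[,2]$ and the sum over $\gamma$ together. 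The whole quantity is then a single Mirzakhani integral over $Z$ together with $\gamma$: $\gamma$ either fills a surface inside $Z$, or its filled surface together with $Z$ has higher Euler characteristic. The count of $\gamma$ of length $\leq L$ on a surface of Euler characteristic $\chi'$ is weighted by $e^{\ell}$ times $g^{-\chi'}$. The term with $\gamma$ essentially disjoint from $Z$ contributes like $\Ewpo[\Ntang[,2]] \cdot \int F(\ell)\, e^{\ell}/\ell \cdots$. This term gives $g^{-2+3\kappa}\norminf{F e^\ell}$ only after using the Weil--Petersson density of simple/non-simple geodesics, which is $\O{e^{\ell}}$ per unit length with no factor $g$.

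Hence the plan is to write the expectation as a sum over the type of the pair (tangles, $\gamma$) and use Mirzakhani's formula with the volume bounds from \cite{Ours1}. Each additional unit of Euler characteristic costs $g^{-1}$ and gains at most $e^{\text{length}/2}$. The exponent $19\kappa$ comes from collecting boundary lengths: at most $6\tfL$ from the two tangles' boundaries, plus overlaps with $\gamma$ crossing them, with $\gamma$'s length $L$ absorbed by the $e^{\ell}$ in $\norminf{F e^\ell}$. The main obstacle is this case analysis when $\gamma$ intersects $Z$: the filled surface of $\gamma \cup Z$ can have boundary length up to $L + 6\tfL$. We must verify that the Euler characteristic gain always compensates, using that $\gamma$ crossing $\partial Z$ forces extra Euler characteristic. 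We would try to handle it by bounding the number of topological types by $(\log g)^{C}$, as in Lemma~\ref{lem:TF_curves}, and applying the uniform volume estimates type by type.
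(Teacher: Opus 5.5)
Your first route fails, as you note yourself: the deterministic count costs a factor $gL$, while the expectation of $\Ntang[,2]$ is genuinely of size about $g^{-2}$, not $g^{-3}$. Decoupling therefore loses a full power of $g$.

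Your second route, treating $\gamma$ and the two tangles jointly through Mirzakhani's formula, is the right framework. However, it remains a plan, and the mechanism you propose for closing it is not the one that works. Let $Z'$ be $S(\gamma)\cup Y\cup Y'$ with disks added. Then $\ell_X(\partial S(\gamma))$ can be as large as $2\ell_X(\gamma)\le 2L$, not $L+6\tfL$. So the factor $e^{\ell_X(\partial Z')/2}$ coming from the volume functions can be of size $g^{6+3\kappa}$, while $\chi(Z')$ is only $2$. Each unit of Euler characteristic buys just $1/g$, so no analysis of how $\gamma$ crosses the tangles can make ``the Euler characteristic gain compensate''.

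What is missing is a counting bound for geodesics filling a \emph{given} subsurface $Z=S(\gamma)$. This is Wu--Xue's result \cite[Theorem 2.4]{Ours1}: $\sum_{\gamma \text{ filling } Z,\ \ell_X(\gamma)\le L} e^{-\ell_X(\gamma)} = \O[\kappa]{L\, e^{-(1-\kappa)\ell_X(\partial Z)/2}}$. Combined with $\ell_X(\partial Z')\le \ell_X(\partial Z)+6\tfL$, it cancels the exponential growth of the volume functions up to a factor $g^{O(\kappa)}$. The $g^{-2}$ then comes only from the topological fact $\chi(Z')\ge 2$: $Y\subsetneq Z'$ and $Z'\setminus Y$ is not a single cylinder. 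This holds uniformly in how $\gamma$ meets the tangles, so no extra Euler characteristic is needed when $\gamma$ crosses them.

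You also need a way to reduce to finitely many topologies, and the one you suggest is not available. \cref{lem:TF_curves} rests on $X\in\Atf$, which is precisely what fails on the support of $\Ntang[,2]$, and it only concerns $\chi=1$. Without it, the number of local types of geodesics of length $\le L$ is not polynomial in $\log g$.

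The same objection applies to your claim that $\{Y,Y'\}$ can be recovered in $\O{(\log g)^C}$ ways. There the crude bound $\Ntang[,2](Z')=\O{g^{6\kappa}}$ from the uniform counting lemma suffices, given the budget $g^{19\kappa}$.

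For $\gamma$, the paper does not enumerate local types at all. It first discards configurations with $\chi(Z')>24$, an event of probability $\O{(\log g)^{c}g^{-16}}$ (adapting \cite[Proposition 6.4]{Ours1}). This is enough against the deterministic bound $\O{g^{13}\log g}$ on the integrand. It then enumerates the finitely many pairs of filling types $(\Sf,\Sf')$ with $2\le\chi(\Sf')\le 24$ and integrates $e^{-\ell_X(\partial Z')/2}$ against $\phi_g^{\Sf'}$. The $g^{-\chi(\Sf')}$ bound on $\phi_g^{\Sf'}$ from \cite[Proposition 5.22]{Ours1} then yields the final $\O[\kappa]{g^{19\kappa-2}}$.
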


\begin{proof}
  First, we notice that, using the triangle inequality, we can consider the case
  $F(x) = e^{-x}\1{[0,L]}(x)$ only.  Let $\gamma$ be a closed geodesic on $X$ of length $\leq L$,
  and $Y$, $Y'$ be two distinct tangles on~$X$. Let $Z := S(\gamma)$ be the surface filled by
  $\gamma$. We define $Z'$ to be the surface obtained by adding any disk in the complement of the
  union $Z \cup Y \cup Y'$. The surface $Z'$ is not necessarily connected, and has at most three
  connected components.  Because the boundary of $Z'$ can be obtained from the boundary of $Z$
  together with $\partial Y$ and $\partial Y'$, we have
  \begin{equation}
    \label{eq:bound_ellZ}
    \ell_X(\partial Z')
    \leq \ell_X(\partial Z) + 6 \tfL
    \leq 2 \ell_X(\gamma) + 6 \kappa \log(g)
    \leq 18 \log(g) = 3L.
  \end{equation}
  
  We observe that the surface $Z'$ contains the surface $Y$ of Euler
  characteristic $-1$. Furthermore, $Z' \neq Y$, because $Z'$ contains $Y'$ and
  $Y \neq Y'$. Since $Z' \setminus Y$ cannot be reduced to one cylinder (by
  definition of the notion of tangle), $r := \chi(Z') \geq 2$.

  As in the proof of \cite[Theorem 6.1]{Ours1}, we use events of extremely small probability to
  bound the Euler characteristic of $Z'$ and reduce the sum to a finite number of terms. Indeed, for
  any $X \in \cM_g$, by the naive uniform bound on the number of closed geodesics under a specific
  length, see e.g. \cite[Lemma 2.2]{Ours1},
    \begin{equation}
      \label{eq:supnormt2}
      \Ntang[,2](X) \sum_{\substack{\gamma \in \mathcal{G}(X) \\ \ell_{X}(\gamma) \leq L}}
      e^{-\ell_X(\gamma)}  
      = \O{(g e^\tfL)^6 g L } = \O{\log(g) g^{13}} 
    \end{equation}
    because any pair of tangles is determined by at most~$6$ geodesics of lengths $\leq \tfL$.  We
    adapt the proof of \cite[Proposition 6.4]{Ours1} to prove that the probability for a surface of
    genus $g$ to contain a surface $Z'$ such as above or Euler characteristic $> \chi$ is
    $$\O[\chi]{\frac{(L+\tfL)^{c(\chi)}e^{L+3 \tfL}}{g^{\chi+1}}}
    = \O[\chi]{\frac{(\log(g))^{c(\chi)}}{g^{\chi-8}}}.$$ It
    follows, taking $\chi=24$, that
    \begin{equation*}
      \Ewp{\Ntang[,2](X) \1{[25,\infty)}(\chi(Z'))
        \sum_{\substack{\gamma \in \mathcal{G}(X) \\ \ell_{X}(\gamma) \leq L}}
      e^{-\ell_X(\gamma)} }
    = \O{\frac{(\log(g))^{c(24)+1}}{g^{24-21}} } = \O{\frac{1}{g^{2}}}.
  \end{equation*}
  We are therefore left to study the sum
  \begin{equation}
    \label{eq:Ntang2_TF_gene_2}
    \sum_{r=2}^{24} 
    \Ewp{\sum_{\substack{(\gamma, Y, Y') \\ \ell_{X}(\gamma) \leq L \\ \chi(Z')=r}}
      e^{-\ell_X(\gamma)}}
  \end{equation}
  where the sum runs over all families $(\gamma, Y, Y')$ such that $\gamma$ is a primitive closed
  geodesic on $X$ of length $\leq L =  6 \log(g)$, $Y$ and $Y'$ are disjoint tangles on $X$, and
  $\chi(Z') = r$.

  Let us fix a filling type $\Sf'$ of absolute Euler characteristic $2 \leq r \leq 24$, with at
  most three connected components, and a connected subsurface $\Sf \subseteq \Sf'$ (we allow that
  $\Sf$ is a connected component of $\Sf'$, or shares some of its boundary components). By
  \eqref{eq:bound_ellZ}, the quantity we need to bound is smaller than
  \begin{equation}
    \label{eq:proof_Ntang_2_enumeration}
    \Ewp{\sum_{\substack{(Z,Z')  \\ \text{ homeo to } (\Sf,\Sf') \\
          \ell_X(\partial Z), \ell_X(\partial Z') \leq 3L}}
      \Ntang[,2](Z') 
      \sum_{\substack{\gamma \text{ filling } Z \\ \ell_X(\gamma) \leq L}} e^{-
        \ell_X(\gamma)}
      \1{\ell_X(\partial Z') \leq \ell_X(\partial Z)+6 \tfL}}.
  \end{equation}
  We once again use \cite[Lemma 2.2]{Ours1} to bound the number of pairs of tangles, and obtain that
  for any $Z'$ as above, $\Ntang[,2](Z') = \O{(r e^{\tfL})^6} = \O{g^{6 \kappa}}$.  We use Wu and
  Xue's counting result, see \cite[Theorem 2.4]{Ours1}, with $\eta = \kappa$ to bound the number of
  possibilities for the loop $\gamma$ filling $Z$, and obtain that
  \begin{align*}
    \sum_{\substack{\gamma \text{ filling } Z \\ \ell_X(\gamma) \leq L}}
    e^{- \ell_X(\gamma)} 
    = \O[\kappa]{L
    \exp \Big(- \frac{1-\kappa}{2} \ell_X(\partial Z)\Big)}
    = \O[\kappa]{g^{12\kappa} \exp \Big(- \frac{\ell_X(\partial Z')}{2}\Big)}
  \end{align*}
  because, by the hypotheses on the lengths of $\partial Z$ and $\partial Z'$ in
  \eqref{eq:proof_Ntang_2_enumeration},
  \begin{equation*}
    \frac{1 - \kappa}{2} \ell_X(\partial Z)
    \geq \frac{1 - \kappa}{2} \ell_X(\partial Z')
    - 3 \kappa (1-\kappa) \log(g)
    > \frac{\ell_X(\partial Z')}{2} - 12 \kappa \log(g).
  \end{equation*}
  Therefore, 
  \begin{equation*}
    \eqref{eq:proof_Ntang_2_enumeration} = \mathcal{O}_\kappa \Bigg(
      g^{18 \kappa} \Ewp{\sum_{\substack{(Z,Z')  \\ \text{ homeo to
          } (\Sf,\Sf') \\ \ell_X(\partial Z), \ell_X(\partial Z') \leq 3L}}
      \exp \paren*{- \frac{\ell_X(\partial Z')}{2}}}\Bigg).
  \end{equation*}
  We use Mirzakhani's integration formula to compute this expectation, and
  obtain that we need to bound
  \begin{equation}
    \label{eq:proof_Ntang_2_integrate}
    g^{18 \kappa}
    \int_{\substack{\norm{\x}_1, \norm{\y}_1 \leq 3L }}
    \phi_g^{\Sf'}(\x) V_{\Sf}(\y) V_{\Sf' \setminus \Sf}(\x,\y)
    \exp \paren*{- \frac{\norm{\x}_1}{2}} 
    \d \x \d \y
  \end{equation}
  where
  \begin{itemize}
  \item $\x$ denotes the length-vector of $\partial Z'$ and $\y$ that of $\partial Z$;
  \item $\norm{\cdot}_1$ is the $\ell^1$-norm on $\R^n$ for $n \geq 1$;
  \item $\phi_g^{\Sf'}$ is sum over all realizations of $\Sf'$ in $S_g$ as introduced in
    \cite[Theorem~5.7]{Ours1} for connected filling types, and generalised in
    \cite[Section 5.5]{Ours1} to more general filling types;
  \item $V_{\Sf'}(\x)$ and $V_{\Sf' \setminus \Sf}(\x,\y)$ respectively stand for the product of the
    Weil--Petersson volumes of the components of $\Sf'$ and $\Sf' \setminus \Sf$.
  \end{itemize}

  We bound the Weil--Petersson volume $V_{\Sf' \setminus \Sf}(\x,\y)$ by the naive polynomial
  bound (see \cite[equation (2.8)]{Ours1}), and note that its values for $\x = \mathbf{0}$ and
  $\y = \mathbf{0}$ are $\O{1}$ because $r \leq 24$. We naturally extend the bound on
  $\phi_g^{\Sf'}$ from \cite[Proposition 5.22]{Ours1} with $\ord = r $ to the case of disconnected
  filling types. Altogether, this allows us to deduce that \eqref{eq:proof_Ntang_2_integrate} is
  $\O{g^{18 \kappa - r} (3L)^{c(r)}}$ for a constant $c(r)$ depending only on~$r$.
  Summing these inequalities for $2 \leq r \leq 24$, we obtain that the expectation we study is
  $\O[\kappa]{g^{19 \kappa -2}}$, which is our claim.
\end{proof}

{
\subsection{Bounding the number of components separated by the $\kappa$-short loops}

When analyzing the infinite sum $\sum_j (-1)^j \Ninj[,j]$, we will have to worry about the
dependency of the bounds we use on the integer $j$, the number of $\kappa$-short-loops. In
order to do so, we exclude another event of extremely small probability to bound the number of
connected components separated by the $\kappa$-short loops using \cref{s:A3}.
More precisely, for an integer $Q$, we define in \cref{s:A3} the set $\MC_X(Q)$ of multi-curves
separating $X$ into at most $Q$ connected components. The event $\cB_g^{ \kappa, Q}$ then groups
genus $g$ hyperbolic surfaces such that any $\kappa$-short multi-curve belongs in $\MC_X(Q)$; its
probability is estimated in \cref{l:PBN}.  Clearly $\Ninj(X)=0$ implies $X \in \cB_g^{ \kappa, Q}$
and hence $\1{\Ninj=0} = \1{\cB_g^{ \kappa, Q}}\1{\Ninj=0}$. As a consequence,
\cref{eq:incl_excl_inj} can be rewritten as
  \begin{equation*}
    \1{\Ninj = 0}
    = \1{\cB_g^{ \kappa, Q}} \sum_{j=0}^\infty (-1)^j \Ninj[,j] 
    = \1{\cB_g^{ \kappa, Q}} \sum_{j=0}^\infty (-1)^j \Ninj[,j,Q] 
  \end{equation*}
  where $\Ninj[,j,Q](X)$ now counts unordered $\kappa$-short multi-loops in $\MC_X(Q)$ (see
  \eqref{eq:NinjQ}). We then write:
  \begin{equation}
    \label{e:Ninj_rewrite_Q}
    \1{\Ninj = 0}
    = \sum_{j=0}^\infty (-1)^j \Ninj[,j,Q]
    + (1-\1{\cB_g^{ \kappa, Q}}) \sum_{j=0}^\infty (-1)^j \Ninj[,j,Q].
  \end{equation}

We prove the following, which will allow us to replace $\1{\Ninj=0}$ by the first sum above in the
proof of the $2/9$ spectral gap result.

\begin{lem}
  \label{lem:Q_app}
  For any $0 < \kappa < 1$, any large enough $g$, $L:=6 \log(g)$, any test function $F$ supported on
  $[0,L]$, if we pick $Q:=77$,
  \begin{equation*}
    \Ewp{(1+\Ntang)(1-\1{\cB_g^{ \kappa, Q}}) \sum_{j=0}^\infty  \Ninj[,j,Q]
    \sum_{\gamma \in \mathcal{G}(X)} |F(\ell_X(\gamma))|} = \O{\|F\|_\infty}.
  \end{equation*}
\end{lem}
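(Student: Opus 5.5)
Since the sum $\sum_j \Ninj[,j,Q]$ and $\Ntang$ are both potentially huge on a set of tiny probability, the plan is to bound the integrand by a deterministic polynomial in $g$ and multiply by the probability of the complement of $\cB_g^{\kappa,Q}$, which \cref{l:PBN} will show decays like a large negative power of $g$ once $Q$ is large. Concretely, we first bound $\sum_{\gamma} |F(\ell_X(\gamma))| \leq \norminf{F} \, \#\{\gamma : \ell_X(\gamma)\leq L\}$. By the naive uniform counting bound \cite[Lemma 2.2]{Ours1} this is $\O{\norminf{F}\, g\, e^{L}} = \O{\norminf{F}\, g^{7}}$. Similarly, $1+\Ntang = \O{(g e^{\tfL})^3} = \O{g^{3+3\kappa}}$, because a tangle is determined by at most three boundary geodesics of length $\leq \tfL$.

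Next we handle $\sum_{j\geq 0} \Ninj[,j,Q]$. By \cref{lem:inc_excl_inj}, every family counted is a multicurve of simple disjoint $\kappa$-short geodesics. The total number of $\kappa$-short geodesics is at most $3g-3$, so this sum is at most $2^{3g-3}$. That is far too big, and this is the main obstacle: we need a polynomial bound, or a probability bound beating exponentials.

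I would try to exploit the restriction to $\MC_X(Q)$. A multicurve separating $X$ into at most $Q$ components is largely non-separating. However, the number of sub-multicurves of a set of $n$ short curves can still be $2^n$. So I would instead keep $\Ninj$ in the estimate and use that \cref{l:PBN} presumably controls the expectation $\Ewp{(1-\1{\cB_g^{\kappa,Q}})\, \cdot\,}$ of such counting functions, not merely a probability. The natural route is to write
\[
\sum_j \Ninj[,j,Q] \leq \sum_{k\leq Q}\ \sum_{\text{multicurves separating into } k \text{ components}} 1
\]
and integrate with Mirzakhani's formula. The event $X\notin\cB_g^{\kappa,Q}$ requires a $\kappa$-short multicurve with more than $Q$ components. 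The Mirzakhani integral over such a configuration produces volume ratios of order $g^{-(Q-1)}$ times $\kappa^{2m}/m!$-type factors, and summing over the number $m$ of curves gives a convergent series (exponential in $\kappa^2$, not in $g$). Combined with the deterministic $\O{g^{10+3\kappa}}$ bound on the other factors via Cauchy--Schwarz or a direct sup bound, choosing $Q=77$ leaves ample room, giving $\O{g^{10+3\kappa}\cdot g^{-(Q-1)/c}}$ for whatever exponent loss $c$ arises in \cref{l:PBN}. This is $\O{1}$ times $\norminf{F}$.

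To summarize the order of steps: (i) sup-norm bounds on $\sum_\gamma|F|$ and $1+\Ntang$; (ii) an expectation bound from \cref{l:PBN} on $(1-\1{\cB_g^{\kappa,Q}})\sum_j\Ninj[,j,Q]$ of size $g^{-N(Q)}$; (iii) checking that $N(77)\geq 11$ or so. The hard step is (ii): making sure the count of short multicurves inside $\MC_X(Q)$ does not blow up exponentially in $g$. I expect this to be exactly what the appendix estimate is designed to deliver.
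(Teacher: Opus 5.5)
Your deterministic bounds on $\sum_\gamma \abso{F(\ell_X(\gamma))}$ and on $1+\Ntang$ are fine, and you correctly see that $\sum_{j}\Ninj[,j,Q]$ has no polynomial pointwise bound. But your step (ii) is where the proof actually has to happen, and you do not supply it. \cref{l:PBN} is only a bound on $\Pwp{X\notin\cB_g^{\kappa,Q}}$. It comes from $1-\1{\cB_g^{\kappa,Q}}\leq \cY^{(1)}_{\kappa,Q}$ and a first-moment computation for multicurves cutting $X$ into exactly $Q$ pieces. It does not control $\Ewp{(1-\1{\cB_g^{\kappa,Q}})\sum_j\Ninj[,j,Q]}$.

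Knowing that an event has probability $\O{g^{-(Q-1)}}$ says nothing about the expectation of an unbounded random variable restricted to that event. The Mirzakhani computation you sketch is the one for that probability, not for the product. Bounding the product directly would need a joint count over pairs (bad multicurve, counted multicurve), which you have not done. A ``direct sup bound'' is unavailable for the reason you yourself identified.

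The missing ingredient is a moment bound on $\cY_{\kappa,Q}=\sum_{j\geq1}\Ninj[,j,Q]$ that is uniform in $g$. This is \cref{p:YN}: $\sup_g\Ewpo\brac*{\cY_{\kappa,Q}^2}<\infty$. This is where the restriction to $\MC_X(Q)$ really enters. It does not make the count pointwise small; it makes its moments finite. There are at most $\mathfrak{q}^{2j}$ mapping-class orbits of $j$-component multicurves cutting $S_g$ into $\mathfrak{q}\leq Q$ pieces. The corresponding sums of volume products are $\O{V_g}$ by \cite[Lemma 24]{nie2023}. Together these give a convergent series of the form $\sum_j C_{\kappa,Q}^j\, j^Q/j!$.

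With this in hand, the paper applies Cauchy--Schwarz so as to separate the indicator from $\cY_{\kappa,Q}$, not merely to combine the deterministic factors. Set $Z:=(1+\Ntang)\sum_j\Ninj[,j,Q]\sum_\gamma\abso{F(\ell_X(\gamma))}$. The expectation is then at most $\Pwp{X\notin\cB_g^{\kappa,Q}}^{1/2}\,\Ewpo\brac*{Z^2}^{1/2}$. Here $\Ewpo\brac*{Z^2}$ is a polynomial in $g$ times $\norminf{F}^2\,\Ewpo\brac*{(1+\cY_{\kappa,Q})^2}$. Your exponent bookkeeping then goes through with loss factor $2$: $(Q-1)/2=38$ comfortably beats the polynomial factor.
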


\begin{proof}
  By Cauchy--Schwarz, the expectation above can be bounded by the product
  $\Pwpo(X\notin \cB_g^{ \kappa, Q})^{1/2} E^{1/2}$ where
  \begin{equation*}
    E := 
    \Ewp{\Big((1+\Ntang) \sum_{j=0}^\infty \Ninj[,j,Q] \sum_{\gamma \in \mathcal{G}(X)}
      |F(\ell_X(\gamma))| \Big)^2}.
  \end{equation*}
  By \cref{l:PBN}, the probability satisfies
  \begin{equation*}
    \Pwp{X\notin \cB_g^{ \kappa, Q}}^{1/2} = \O[\kappa,Q]{\frac{1}{g^{(Q-1)/2}}}.
  \end{equation*}
  The value of $Q$ is picked to compensate the square root of the expectation. Indeed, by the
  uniform geodesic counting argument, \cite[Lemma 2.2]{Ours1},
  \begin{equation*}
    E = \O{\|F\|^2_\infty \big(L e^L g (g e^\tfL)^6\big)^2 \, \Ewpo[g] [(\cY_{\kappa,Q}+1)^2]}
  \end{equation*}
  where $\cY_{\kappa,Q} =  \sum_{j=1}^\infty \Ninj[,j,Q] $ is introduced in \cref{s:A3}. We prove
  in \cref{p:YN} that the expectation above is $\O[\kappa,Q]{1}$ and hence
  $$E = \O[\kappa,Q]{\|F\|_\infty^2 (\log(g))^2 g^{2(6+1+6(1+\kappa))}}
  = \O[\kappa,Q]{\|F\|_\infty^2 g^{38}}.$$
  Taking $Q = 2 \times 38+1=77$ yields the claimed result.
\end{proof}

Let us now prove that we can truncate the first sum in \eqref{e:Ninj_rewrite_Q} to only account for
integers $j \leq \log g$.

\begin{lem}
  \label{lem:bound_j_inc}
  With the notations of \cref{lem:Q_app},
  \begin{equation*}
    \Ewp{(1+\Ntang) \sum_{j > \log g} \Ninj[,j,Q]
    \sum_{\gamma \in \mathcal{G}(X)} |F(\ell_X(\gamma))|} = \O{\|F\|_\infty}.
  \end{equation*}
\end{lem}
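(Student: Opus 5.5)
The plan is to bound pointwise the two factors that do not involve $j$, then control the tail $\sum_{j>\log g}\Ninj[,j,Q]$ in expectation using the moment bounds of \cref{s:A3}. By the uniform geodesic counting bound \cite[Lemma 2.2]{Ours1}, for every $X \in \cM_g$ we have
\begin{equation*}
  (1+\Ntang(X)) \sum_{\gamma \in \mathcal{G}(X)} |F(\ell_X(\gamma))|
  = \O{\|F\|_\infty\, L e^L g\, (g e^{\tfL})^6} = \O{\|F\|_\infty\, g^{c}}
\end{equation*}
for some absolute exponent $c$ (of order $14$, as in the proof of \cref{lem:Q_app}). It therefore suffices to show that
\begin{equation*}
  \Ewpo[g]\Big[\sum_{j > \log g} \Ninj[,j,Q]\Big] = \O{g^{-c}}.
\end{equation*}

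For this I will use the structure of $\kappa$-short multi-curves. By \cref{lem:inc_excl_inj}, the families counted by $\Ninj[,j]$ consist of $j$ simple disjoint geodesics. If such a family exists, then $\Ninj \geq j$, and $\Ninj[,j,Q] \le \binom{\Ninj}{j}$. Hence
\begin{equation*}
  \sum_{j>\log g}\Ninj[,j,Q] \le \sum_{j>\log g}\binom{\Ninj}{j} \le 2^{\Ninj}\,\1{\Ninj > \log g}.
\end{equation*}
This reduces the problem to an exponential moment bound on $\Ninj$. I would take this from \cref{s:A3}, where (as suggested by \cref{p:YN}) one controls $\Ewpo[g][(\cY_{\kappa,Q}+1)^2]$. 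The natural strengthening is that $\Ewpo[g][e^{s\Ninj}] = \O[\kappa,s]{1}$ for some $s > \log 2$, or at least a factorial moment bound $\Ewpo[g][\binom{\Ninj}{j}] \le C_\kappa^{\,j}/j!$ uniformly in $g$ and $j$. Short loops are asymptotically Poisson with mean $\O{\kappa^2}$ \cite{mirzakhani2013}, so one expects the factorial moments to behave like $\lambda^j/j!$.

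Granting the factorial moment bound, the tail is estimated directly:
\begin{equation*}
  \sum_{j > \log g} \frac{C_\kappa^j}{j!} \le \frac{(eC_\kappa)^{\log g}}{(\log g)^{\log g}} = g^{\,1+\log C_\kappa - \log\log g},
\end{equation*}
which is smaller than any fixed negative power of $g$, in particular $\O{g^{-c}}$. This is what the threshold $j > \log g$ is designed to produce.

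The main obstacle is the uniform factorial moment bound $\Ewpo[g][\Ninj[,j,Q]] \le C^j/j!$. It should follow from Mirzakhani's integration formula applied to $j$-component $\kappa$-short multi-curves in $\MC_X(Q)$. The $1/j!$ comes from the unordered count, the factor $C^j$ from the $\kappa^2$ volume of the integration domain per curve, and the volume ratio $V_{g-j',\ldots}/V_g$ must be bounded by $C^j$ uniformly. The restriction to $\MC_X(Q)$, which limits the surface to at most $Q$ components, is what allows the volume ratios to be handled by the estimates of \cref{s:A3} (as in \cref{p:YN}) with constants depending only on $\kappa$ and $Q$. I would try to extract exactly this bound from the proof of \cref{p:YN} rather than reprove it.
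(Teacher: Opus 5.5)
Your proposal is correct and follows the paper's route. You bound $(1+\Ntang)\sum_\gamma |F(\ell_X(\gamma))|$ deterministically by a power of $g$ via \cite[Lemma 2.2]{Ours1}, and multiply by the superpolynomially small tail of \cref{lem:tail_j}, which is exactly the termwise estimate $\Ewpo\big[\Ninj[,j,Q]\big] \le C\,Q^{2j+1}(2j)^Q I_\kappa^j/j!$ read off from \eqref{e:bound_EY} and summed over $j>\log g$. The intermediate detour through $2^{\Ninj}$ and unrestricted exponential or factorial moments of $\Ninj$ is unnecessary, and the paper does not supply it, since its moment bounds rely on the $\MC_X(Q)$ restriction; you should apply the $Q$-restricted bound directly, as in your last paragraph.
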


\begin{proof}
  This is a direct consequence of the tail estimate \cref{lem:tail_j} together with the trivial uniform
  counting bounds on $\Ntang$ and the sum over $\cG(X)$ coming from the uniform bound \cite[Lemma 2.2]{Ours1}.
\end{proof}

We shall now introduce some notation to accommodate to the counting functions~$\Ninj[, j,Q]$.

\begin{nota}
  Let $\type$ be a local type. For an integer $j$, we define as $\rho_j \type$ the local type
  obtained by adding $j$ copies of the local type simple to $\type$, i.e.
  $$\rho_j \type = (\type, \underbrace{\mathbf{s}, \ldots, \mathbf{s}}_{j \text{ times}}).$$
  Similarly, for a filling type $\Sf$, we define $\rho_j\Sf$ to be the filling type obtained by
  adding $j$ copies of the cylinder to $\Sf$.
\end{nota}

\begin{nota}
  For integers $j$, $Q$, test functions $F : \R_{>0} \rightarrow \C$ and
  $\mu : \R_{>0}^j \rightarrow \C$, we define the $Q$-bounded average
  \begin{equation*}
    \avQ[\rho_j \type]{\mu \otimes F}
    = \Ewp{\sum_{\substack{\gamma \in \mathcal{G}(X) \\ \beta \in \MC_X(Q) \\ (\gamma,\beta) \sim \rho^j\type}}
      F(\ell_X(\gamma)) \mu(\ell_X(\beta))}.
  \end{equation*}
\end{nota}
Following the conventions of \cite[Section 5.5]{Ours1}, we also define the averages
$$\av[\rho_j \vec{\type}]{\mu \otimes (F_1, \ldots, F_m)}$$ for a family of types $\vec{\type}$ and of
test functions $F_1, \ldots, F_m$.

\begin{rem}
  In the following, we will be particularly interested in the case of the function
  \begin{equation}
    \label{eq:mukappa}
    \mu_\kappa^j : (x_1, \ldots, x_j) \mapsto \frac{(-1)^j}{j!} \prod_{i=1}^j \1{[0,\kappa]}(x_i)
  \end{equation}
  which appears naturally in the inclusion-exclusion.
\end{rem}

We can then extend the integration formula, \cite[Theorem 5.7]{Ours1}, to the $Q$-bounded averages.
\begin{lem}
  \label{lem:express_average_j}
  For any local type $\type = \eqc{\Sf,\curve}$, any integers $j \geq 1$, $Q \geq 0$, $g \geq 3$,
  any test functions $F:\R_{>0} \rightarrow \C$ and $\mu:\R^j_{>0} \rightarrow \C$,
  \begin{equation}
    \avQ[\rho_j \type]{\mu \otimes F}
    = \frac{1}{n(\type)} \int_{\mathbf{y} \in \R_{>0}^j} \int_{\cT_{g_\Sf,n_\Sf}^*}
    F(\ell_Y(\gamma))   \,  \mu(\mathbf{y}) \,
    \phi_{g,Q}^{\rho_j \Sf}(\x,\mathbf{y})
    \d \Volwp[g_\Sf,n_\Sf](\x,Y) \d \mathbf{y}
  \end{equation}
  where
  \begin{equation}
    \label{eq:phi_ext_Q}
     \phi_{g,Q}^{\rho_j \Sf}(\x, \mathbf{y})
    := \frac{x_1 \ldots x_{n_\Sf} y_1 \ldots y_j}{V_g} 
    \sum_{\mathfrak{R} \in R_{g,Q}(\rho_j\Sf)} V_{\mathfrak{R}}(\x,\mathbf{y},\mathbf{y})
  \end{equation}
  and $R_{g,Q}(\rho_j\Sf)$ is the set of realizations of $\Sf$ and $j$ cylinders in a surface of
  genus $g$ such that the $j$ cylinders separate it in at most $Q$ connected
  components.
\end{lem}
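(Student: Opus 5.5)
The plan is to reproduce the proof of the integration formula \cite[Theorem 5.7]{Ours1}, adding only a restriction on which mapping-class-group orbits are summed over. We start from the definition of $\avQ[\rho_j \type]{\mu \otimes F}$. The sum is over pairs $(\gamma,\beta)$ with $(\gamma,\beta) \sim \rho_j\type$ and $\beta \in \MC_X(Q)$. We group these pairs according to the pair $(Z,\beta)$, where $Z = S(\gamma)$ is the filled surface (homeomorphic to $\Sf$) and $\beta$ is the multi-curve of $j$ simple disjoint components disjoint from $Z$. Up to the mapping class group of $S_g$, such configurations are classified by realizations $\mathfrak{R}$ of $\rho_j \Sf$ in $S_g$: the topological type of $S_g \setminus (Z \cup \beta)$, together with the gluing data.

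The key remark is that the condition ``$\beta$ separates $X$ into at most $Q$ connected components'' is purely topological. It is invariant under the mapping class group, so it depends only on the realization $\mathfrak{R}$ and not on the hyperbolic metric. Hence the set of configurations with $\beta\in\MC_X(Q)$ is exactly the union of the orbits indexed by $\mathfrak{R}\in R_{g,Q}(\rho_j\Sf)$. The sum over all realizations in \cite[Theorem 5.7]{Ours1} can therefore be restricted to $R_{g,Q}(\rho_j\Sf)$ with no other change.

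For each fixed realization we apply Mirzakhani's integration formula in the form used in \cite[Section 5.5]{Ours1} for generalised filling types, treating the $j$ cylinders as additional components of the filling type.
\begin{itemize}
\item The lengths $\x$ of $\partial Z$ and the lengths $\mathbf{y}$ of the $\beta_i$ become integration variables. Each boundary length comes with its twist factor, giving the product $x_1\cdots x_{n_\Sf} y_1 \cdots y_j$ once the twist parameters are integrated.
\item The complement contributes the volume $V_{\mathfrak{R}}(\x,\mathbf{y},\mathbf{y})$. Each $\beta_i$ gives two boundary components of the complement, both of length $y_i$, which is why $\mathbf{y}$ appears twice.
\item The inner integral over $\cT^*_{g_\Sf,n_\Sf}$ of $F(\ell_Y(\gamma))$ against $\d\Volwp[g_\Sf,n_\Sf](\x,Y)$ is exactly as in \cite[Theorem 5.7]{Ours1}.
\item The factor $1/n(\type)$ accounts for the symmetries of the local type. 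The extra symmetry from permuting the cylinders is absorbed either in the realization count or in $\mu$; since $\mu$ need not be symmetric, this is handled in the same way as in \cite[Section 5.5]{Ours1}.
\end{itemize}
Dividing by $V_g$ gives \eqref{eq:phi_ext_Q}.

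The only point requiring care is the bookkeeping of symmetries and orderings of the $\beta_i$, since $\beta$ is an ordered $j$-tuple in the average and $\mu$ is evaluated on ordered lengths. This should be checked consistently with the conventions for families of types in \cite[Section 5.5]{Ours1}.

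Convergence and exchange of sum and integral raise no issue for nonnegative test functions. By Tonelli's theorem every manipulation is legitimate, possibly with both sides infinite. General $F$ and $\mu$ then follow by linearity whenever the right-hand side converges absolutely, because the identity holds term by term in $\mathfrak{R}$.
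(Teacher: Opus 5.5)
Your proposal is correct and is essentially the paper's approach. The paper gives no separate proof and simply extends \cite[Theorem 5.7]{Ours1}; the only new input is your observation that the number of components of $X \setminus \beta$ is a mapping-class-group invariant determined by the realization, so the sum over realizations restricts to $R_{g,Q}(\rho_j\Sf)$. On the symmetry point you left open: $\beta$ is an ordered $j$-tuple and $\mu$ is evaluated on ordered lengths, so the cylinders are labelled and no extra factor arises (the $1/j!$ sits in $\mu_\kappa^j$); nothing needs to be ``absorbed in $\mu$''.
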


In the general case (without $j$ and $Q$), we would use \cite[Lemma 5.21]{Ours1} to reduce the number
of terms in \eqref{eq:phi_ext_Q} using the notion of rank of a realization. This result needs to be
adapted for the inclusion-exclusion to account for the dependency in~$j$; this is done in
\cref{lem:limit_rank_Q}. }

\section{Expression of the terms arising in the inclusion-exclusion}
\label{sec:reduct-numb-topol}

The aim of this section is to express sums of the form
\begin{equation}
  \label{eq:av_cond}
   \av[\type]{F \, | \, X \in \Atf} :=
  \Ewp{\sum_{\gamma \sim \type} F(\ell_X(\gamma)) \, \1{\Atf}(X)} 
\end{equation}
for a local type $\type$ under a manageable form building on the results above, up to errors which
will be negligible for our purposes.
The key starting point is to rewrite
\begin{equation}
  \label{eq:atf_app} \1{\Atf}
  = (1-\Ntang) \sum_{j=0}^{\lfloor \log g \rfloor} (-1)^j \Ninj[,j,Q]  + \text{ error}
\end{equation}
where the error term corresponds to the contributions estimated in \cref{s:reduction}.

\subsection{The local type simple}

We prove the following explicit inclusion-exclusion formula for the local type simple.

\begin{nota}
  Let us introduce a few notations useful to the following statement.
  \begin{itemize}
  \item $\type_{0,3}^\partial$ and $\type_{1,1}^\partial$ are the local types $\eqc{\Sf, \partial \Sf}$ with
    $\Sf = \Sf_{0,3}$ and $\Sf_{1,1}$ respectively;
  \item $\type_{1,1}^{\mathrm{s},\partial} =\eqc{\Sf_{1,1}, (\beta, \partial \Sf_{1,1})}$ where
    $\beta$ is the essential simple closed loop on the once-holed torus $\Sf_{1,1}$;
  \item $\mathrm{Loc}_{1,1}^{2 \mathrm{s}}$ is the set of local topological types of pairs of simple
    loops filling $\Sf_{1,1}$;
  \item $\mathrm{Loc}_{1,1}^{2 \mathrm{s},\partial}$ is the set of local types
    $\eqc{\Sf_{1,1},(\beta_1, \beta_2, \partial \Sf_{1,1})}$ where $(\beta_1, \beta_2)$ are two
    simple loops on $\Sf_{1,1}$.
  \end{itemize}
\end{nota}

\begin{figure}[h!]
  \centering
  \includegraphics[scale=0.7]{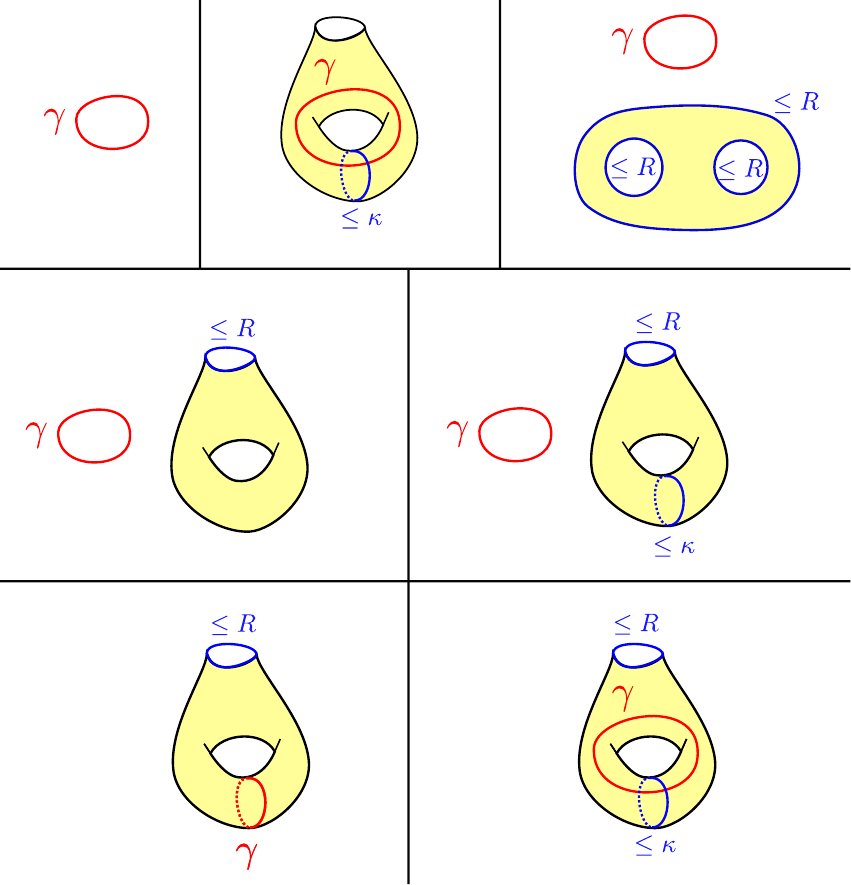}
  \caption{Illustration of the different terms in \cref{p:incl_s}.}
  \label{fig:incl_0}
\end{figure}

\begin{prp}
  \label{p:incl_s}
  There exists a constant $c>0$ such that, for any small enough $\kappa>0$, any large enough $g$,
  $L = 6 \log(g)$, $\tfL = \kappa \log(g)$, $Q=77$, any test function $F$ supported on $[0,L]$, the
  average $ \av[\mathbf{s}]{F \, | \, X \in \Atf} $ is equal to 
  \begin{align}
    \label{e:exp_TF_1}
    \sum_{j=0}^{\lfloor \log g \rfloor} \Big[
    & \avQ[\rho_j\mathbf{s}]{\mu_\kappa^j \otimes F}
      - \sum_{\type \in \mathrm{Loc}_{1,1}^{2 \mathrm{s}}}
      \avQ[\rho_j \type]{\mu_\kappa^j \otimes (F,\mu_\kappa^1)}
    - \avQ[\rho_j(\mathbf{s},\type_{0,3}^\partial)]{\mu_\kappa^j \otimes
      (F,\mu_{\tfL}^{3})} \\
    \label{e:exp_TF_2}
    &  - \avQ[\rho_j(\mathbf{s},\type_{1,1}^\partial)]{\mu_\kappa^j \otimes
      (F,\mu_{\tfL}^{1})}
      + \avQ[\rho_j(\mathbf{s},\type_{1,1}^{\mathrm{s},\partial})]{\mu_\kappa^j \otimes
      (F,(\mu_\kappa^1,\mu_\tfL^1))}
    \\
    & \label{e:exp_TF_3}
     - \avQ[\rho_j\type_{1,1}^{\mathrm{s},\partial}]
      {\mu_\kappa^j \otimes (F,\mu_\tfL^1)}
    + 
      \sum_{\type \in \mathrm{Loc}_{1,1}^{2 \mathrm{s}, \partial}}
      \avQ[\rho_j \type]{\mu_\kappa^j \otimes
      (F,\mu_\kappa^1,\mu_\tfL^1)}   \Big]
  \end{align}
  up to an error of size $\O{\|F\|_\infty + \|F(\ell) e^{\ell}\|_\infty/g^{2-c \kappa}}$.
\end{prp}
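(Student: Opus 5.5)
We start from the decomposition \eqref{eq:atf_app}: write $\1{\Atf} = \1{\Ninj=0}\1{\Ntang=0}$, and apply \cref{lem:inc_excl_tang} to the second factor and \eqref{e:Ninj_rewrite_Q} to the first. Multiplying by $\sum_{\gamma \sim \mathbf{s}} F(\ell_X(\gamma))$ and taking expectations gives four kinds of error terms.

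\begin{itemize}
\item The term involving $\Ntang[,2]$ is bounded by \cref{lem:incl_exc_work_tang}, with $\1{\Ninj=0}\le 1$. It contributes $\O{\|F(\ell)e^\ell\|_\infty/g^{2-19\kappa}}$.
\item The term involving $1-\1{\cB_g^{\kappa,Q}}$, multiplied by $(1-\Ntang)$, is bounded by \cref{lem:Q_app}. It contributes $\O{\|F\|_\infty}$.
\item The tail $j>\log g$, again multiplied by $(1-\Ntang)$, is bounded by \cref{lem:bound_j_inc}. It also contributes $\O{\|F\|_\infty}$.
\end{itemize}

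We are then left with
\[
\sum_{j\le \log g}(-1)^j\,\Ewp{(1-\Ntang)\Ninj[,j,Q]\sum_{\gamma\sim \mathbf{s}}F(\ell_X(\gamma))}.
\]
The factor $(-1)^j\Ninj[,j,Q]$ over unordered families is exactly encoded by the weight $\mu_\kappa^j$: its $1/j!$ converts ordered tuples into unordered ones.

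The core of the proof is to rewrite each resulting expectation as a sum of $Q$-bounded averages over local types, by classifying the relative position of $\gamma$, the $j$ short loops, and the tangle $Y$. By \cref{lem:inc_excl_inj}, the short loops are simple and pairwise disjoint.

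For the term without $\Ntang$, $\gamma$ is simple and either disjoint from all short loops or intersecting some of them.
\begin{itemize}
\item If $\gamma$ is disjoint from all of them, it is either distinct from every short loop, giving $\rho_j\mathbf{s}$, or equal to one of them.
\item If $\gamma$ intersects a short loop, then by the collar lemma $\gamma$ meets at most one of them. The union of $\gamma$ and that loop then fills a once-holed torus (two simple loops intersecting once) or a four-holed sphere. Singling out this loop among the $j+1$ short loops produces the terms $\rho_j\type$ with $\type\in\mathrm{Loc}_{1,1}^{2\mathrm{s}}$ and weight $\mu_\kappa^1$. The reindexing $j+1\to j$ accounts for the sign flip, and the $j!$ combinatorics matches $\mu_\kappa^{j+1}$ against $\mu_\kappa^j\otimes\mu_\kappa^1$.
\end{itemize}
I expect the four-holed-sphere configurations, and the case $\gamma$ equal to a short loop, to be absorbed: they are either $\O{\|F\|_\infty}$ (since $F$ is evaluated at a length $\le\kappa$, and the expected number of short loops is bounded) or not counted once tangles are excluded.

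For the term with $\Ntang$, a tangle is either a pair of pants with three boundary components of length $\le\tfL$ (weight $\mu_\tfL^3$) or a once-holed torus (weight $\mu_\tfL^1$). The simple geodesic $\gamma$ can sit in three ways relative to $Y$.
\begin{itemize}
\item $\gamma$ outside $Y$, or on its boundary: this gives the types $(\mathbf{s},\type_{0,3}^\partial)$ and $(\mathbf{s},\type_{1,1}^\partial)$.
\item $\gamma$ inside a torus tangle: this gives $\type_{1,1}^{\mathrm{s},\partial}$.
\item $\gamma$ crossing $\partial Y$: since $\partial Y$ has length $\le\tfL$ and $\gamma$ is simple, this case should be a negligible $\O{\|F(\ell)e^\ell\|_\infty g^{c\kappa-2}}$ contribution. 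The argument is as in \cref{lem:incl_exc_work_tang}: the filled surface has $\chi\ge2$, and the proof of that lemma gives the same bound.
\end{itemize}
The interaction of the short loops with a torus tangle $Y$ then produces the two terms carrying the extra factor $\mu_\kappa^1$. In these, one short loop lies inside $Y$, as the simple loop of $\type_{1,1}^{\mathrm{s},\partial}$ or as one of the two loops of $\mathrm{Loc}_{1,1}^{2\mathrm{s},\partial}$. Taking such a loop out of the $j$ short loops flips the sign, which explains the $+$ signs in \eqref{e:exp_TF_2}--\eqref{e:exp_TF_3}.

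Finally, configurations where short loops meet the tangle in a more complicated way, or where $\gamma$ and tangles overlap in higher Euler characteristic, are discarded. They are bounded through \cref{lem:express_average_j} together with the rank bound \cref{lem:limit_rank_Q}, which keeps the estimates uniform in $j\le\log g$. These give errors of size $g^{c\kappa-2}\|F(\ell)e^\ell\|_\infty$.

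The main obstacle is this exhaustive case enumeration, with the correct multiplicities and signs, and in particular showing that every discarded configuration is genuinely of order $g^{c\kappa-2}$ uniformly in $j$. I would tackle it by first fixing the filling type of the union of $\gamma$, the short loops meeting $\gamma$ or $Y$, and $Y$, and then bounding every type with absolute Euler characteristic $\ge 2$ via the argument of \cref{lem:incl_exc_work_tang}.
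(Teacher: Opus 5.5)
Your overall route matches the paper's. You substitute \eqref{eq:atf_app} and remove the $\Ntang[,2]$, $\cB_g^{\kappa,Q}$ and $j>\log g$ errors with Lemmas~\ref{lem:incl_exc_work_tang}, \ref{lem:Q_app} and \ref{lem:bound_j_inc}. You encode unordered families through $\mu_\kappa^j$, keep the configurations whose filled surface $\Sf'$ has $\chi(\Sf')\le 1$, and discard the rest as in \cref{lem:incl_exc_work_tang}. Your enumeration of the tangle terms also agrees with the paper's.

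However, two justifications in the tangle-free part are false. First, the collar lemma does not imply that a simple $\gamma$ meets at most one $\kappa$-short loop. Crossing a $\kappa$-short geodesic only costs a length of order $\log(1/\kappa)$, so nothing prevents a simple closed geodesic of length $\le 6\log g$ from crossing several disjoint short loops, up to order $\log g$ of them. Second, the four-holed-sphere configurations are not negligible for the reasons you give. There $F$ is evaluated at $\ell_X(\gamma)$, which is not $\le\kappa$, and the $\Ntang$-free term contains no tangle exclusion at all. Relatedly, $\Sf_{1,1}$ is filled by pairs of simple loops with any intersection number $\ge 1$, not only one. $\mathrm{Loc}_{1,1}^{2\mathrm{s}}$ contains all of them, and the pairs with $|i|\ge 2$ are handled separately in the proof of \cref{prp:FR_incl_exclu}.

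The mechanism the paper actually uses is topological. The $\beta_i$ are simple, pairwise disjoint and distinct, and any $\beta_i$ crossed by $\gamma$ is non-peripheral in $\Sf'$. A pair of pants contains no non-peripheral simple closed curve, and a once-holed torus contains no two disjoint non-homotopic ones. Hence $\chi(\Sf')=1$ forces $\Sf'$ to be $\Sf_{1,1}$ plus cylinders, with $\gamma$ crossing exactly one $\beta_i$ (at least one, because $\gamma$ is simple). Every other configuration has $\chi(\Sf')\ge 2$: two or more crossed loops, a four-holed sphere, or any other surface filled by $\gamma$ and a single $\beta_i$. Such configurations are $\O[\kappa]{\norminf{F(\ell)e^\ell}/g^{2-c\kappa}}$ by the argument of \cref{lem:incl_exc_work_tang}, uniformly in $j\le\log g$. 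You invoke exactly this bound in your last paragraph, so the repair is available within your plan, but the case analysis must rest on it rather than on the collar lemma.

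A smaller point concerns $\gamma=\beta_i^{\pm1}$. The paper obtains $\gamma\ne\beta_i^{\pm1}$ by discarding $\ell_X(\gamma)\le\kappa$ before expanding, where $\1{\Atf}\le 1$ makes this cost only $\O{\kappa^2\norminf{F}}$. If you instead treat $\gamma=\beta_i^{\pm1}$ after the expansion, you need a bound on $\sum_{j}j\,\Ewpo\big[(1+\Ntang)\,\Ninj[,j,Q]\big]$. Because of the $\Ntang$ factor, this is more than the boundedness of the expected number of short loops.
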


The terms in this formula correspond to the different topologies one needs to enumerate for the
simple closed geodesic $\gamma$, the $\kappa$-short loops and the $\tfL$-tangle. They are
represented in \cref{fig:incl_0}, in the same arrangement as the formula.

\begin{proof}
  First, we observe that by the triangle inequality,
  \begin{align*}
    \av{F \1{[0,\kappa]}}
     \leq \norminf{F} \, \Ewpo \brac*{\# \{\gamma \in \mathcal{G}(X) \, : \, \ell_X(\gamma) \leq
        \kappa\}} 
     = \O{\kappa^2 \norminf{F}} 
  \end{align*}
  by \cite[Section 4.2]{mirzakhani2013}.  We can therefore restrict the sum we wish to study to
  geodesics~$\gamma$ of length $> \kappa$. By \cref{lem:inc_excl_inj}, for small enough $\kappa$,
  all geodesics of length $\leq \kappa$ are disjoint.

  As suggested by the beginning of this section, we substitute $\1{\Atf}$ in \eqref{eq:av_cond}
  using \eqref{eq:atf_app}. The error is bounded in Lemmas \ref{lem:incl_exc_work_tang} to
  \ref{lem:bound_j_inc} by $\norminf{F(\ell)e^\ell}/g^{2-19 \kappa} +\norminf{F}$. We
  are therefore left with expressing the sums
  \begin{equation}
    \label{eq:incl_exc_imp_1}
    \sum_{j=0}^{\lfloor \log g \rfloor} (-1)^j
    \Ewp{\Ninj[,j,Q](X)\sum_{\gamma \text{ simple}} F(\ell_X(\gamma))}
  \end{equation}
  and
  \begin{equation}
    \label{eq:incl_exc_imp_2}
     \sum_{j=0}^{\lfloor \log g \rfloor} (-1)^{j+1}
    \Ewp{\Ninj[,j,Q](X) \Ntang(X)\sum_{\gamma \text{ simple}} F(\ell_X(\gamma))}.
  \end{equation}
  We only detail the computation for the first average, the second following the same lines.

  We pick a numbering of the $j$-tuples counted in $\Ninj[,j,Q]$, which allows us to rewrite the
  sum as 
  \begin{equation}
    \label{eq:proof_TF_ind_1}
    \sum_{j=0}^{\lfloor \log g \rfloor} \Ewp{
            \sum_{\substack{\beta \in \MC_X(Q)}} \mu_\kappa^j(\ell_X(\beta))
      \sum_{\substack{\gamma \text{ simple} \\ \kappa < \ell_X(\gamma) \leq L}} F(\ell_X(\gamma))
       }
  \end{equation}
  for the function $\mu_\kappa^j$ introduced in \eqref{eq:mukappa}.
  We notice that the hypothesis $\ell_X(\gamma) > \kappa$ implies that $\gamma$ is distinct from
  $\beta_i^{\pm 1}$ for all~$i$.  As in the proof of \cref{lem:incl_exc_work_tang}, we regroup the
  terms of this sum according to the topology $\Sf'$ of the surface filled by the multi-loop
  $(\gamma, \beta_1, \ldots, \beta_j)$. This allows us to prove, using the same strategy, that there
  exists a constant $c>0$ such that the restriction of \cref{eq:proof_TF_ind_1} to multi-loops such
  that $\chi(\Sf') \geq 2$ is $\O[\kappa]{\norminf{F(\ell) \, e^\ell}/g^{2-c \kappa}}$.
  Hence, we only need to enumerate local topologies for which $\chi(\Sf') \leq 1$.

  The only local topology for which $\chi(\Sf')=0$ is the product $\mathbf{s}^{j+1}$ of $j+1$ copies
  of the local type ``simple''. By definition, this local topology corresponds to the average
  $\av[\rho_j\mathbf{s}]{\mu_\kappa^j \otimes F}$ appearing as the first term of \eqref{e:exp_TF_1}.

  Now, we carefully enumerate the local topologies for which $\chi(\Sf') = 1$. We recall that, by
  hypothesis on $\kappa$, for any multi-loop $(\gamma, \beta_1, \ldots, \beta_j)$ contributing to
  the sum, all the $\beta_j$ are simple and disjoint. Furthermore, $\gamma$ is simple, and distinct
  from any $\beta_i^{\pm 1}$ (but not necessarily disjoint).  The only way to obtain a local type of
  absolute Euler characteristic $1$ then is to take a type $\type \times \mathbf{s}^{j-k}$, where
  $\type = \eqc{\Sf_{1,1},(\gamma, \beta_{i_1}, \ldots, \beta_{i_k})}$ or
  $\eqc{\Sf_{0,3},(\gamma, \beta_{i_1}, \ldots, \beta_{i_k})}$ for a $k \leq j$ and a multi-loop
  $(\gamma, \beta_{i_1}, \ldots, \beta_{i_k})$ filling $\Sf_{1,1}$ or $\Sf_{0,3}$.
  \begin{itemize}
  \item Any simple loop on the pair of pants $\Sf_{0,3}$ is homotopic to one of its boundary
    components. It is therefore impossible to fill a pair of pants with a multi-loop containing only
    simple loops, so the case $\Sf_{0,3}$ cannot happen.
  \item If $\type = \eqc{\Sf_{1,1},(\gamma, \beta_{i_1}, \ldots, \beta_{i_k})}$, in order for the
    multi-loop to fill the once-holed torus, we need $k \geq 1$, because $\gamma$ is simple. But a
    once-holed torus cannot contain two disjoint essential curves, so $k=1$. After the change of
    variables $j \rightarrow j-1$, counting the number of possibilities for the choice of the index
    $i_1$, this yields the second term of \eqref{e:exp_TF_1}.
  \end{itemize}
  This allows us to conclude for \eqref{eq:incl_exc_imp_1}.

  For \eqref{eq:incl_exc_imp_2}, we now enumerate possibilities for the simple loop $\gamma$, the
  multi-curve $(\beta_1, \ldots, \beta_j)$ and the tangle $\Sf_{0,3}$ or $\Sf_{1,1}$. Note that
  possibilities are limited because the absolute Euler characteristic of the tangle is $1$ and we
  can exclude all situations for which the total absolute Euler characteristic is $>1$. We therefore
  obtain the following possibilities.
  \begin{itemize}
  \item If none of the loops $\gamma, \beta_1, \ldots, \beta_j$ is contained inside the tangle, then
    because the total Euler characteristic is at most $1$, this family of loops must have no
    self-intersection. Depending on whether the tangle is a pair of pants or a once-holed torus, we
    obtain the third term of \eqref{e:exp_TF_1} and the first term of \eqref{e:exp_TF_2}.
  \item If one of the loops is included in the tangle, then the tangle must be a once-holed
    torus. Then, since a once-holed torus cannot contain more that one simple disjoint loop, we are
    reduced to the following possibilities:
    \begin{itemize}
    \item either the tangle contains exactly one $\beta_k$ in its interior, which corresponds to
      the second term in \eqref{e:exp_TF_2};
    \item or the tangle contains $\gamma$ and no $\beta_k$ in its interior, which corresponds to the
      first term in \eqref{e:exp_TF_3};
    \item or it contains $\gamma$ and one $\beta_k$, which is the second term in \eqref{e:exp_TF_3}.
    \end{itemize}
  \end{itemize}
\end{proof}

\subsection{Local types filling a pair of pants}

Let us now state the equivalent of \cref{p:incl_s} for a local type filling a pair of pants. The
enumeration of cases is simpler here as the surface $S(\gamma)$ already has absolute Euler
characteristic $1$.

\begin{nota}
  Let $\type = \eqc{\Sf_{0,3},\curve}$ be a local type filling a pair of pants. We write
  $\type^\partial := \eqc{\Sf_{0,3},(\curve, \partial \Sf_{0,3})}$. We denote as
  $\mathrm{Loc}_{1,1}^{\type,\mathrm{s},\partial}$ the set of local types
  $\eqc{\Sf_{1,1},(\beta_1,\beta_2,\partial \Sf_{1,1})}$ where $\beta_1$ is a loop of local type
  $\type$ on $\Sf_{1,1}$ and $\beta_2$ is a simple loop on $\Sf_{1,1}$ so that the pair
  $(\beta_1,\beta_2)$ fills~$\Sf_{1,1}$.
\end{nota}

\begin{prp}
  \label{p:incl_pop}
  There exists a constant $c>0$ such that, for any small enough $\kappa>0$, any large enough $g$,
  $L = 6 \log(g)$, $\tfL = \kappa \log(g)$, $Q=77$, any test function $F$ supported on $[0,L]$, any
  local type $\type$ of filling type $(0,3)$, the average $ \av[\type]{F \, | \, X \in \Atf} $ is
  equal to
  \begin{align}
    \label{e:exp_TF_1_pop}
    \sum_{j=0}^{\lfloor \log g \rfloor} \Big[
    \avQ[\rho_j\type]{\mu_\kappa^j \otimes F}
    - \avQ[\rho_j\type^\partial]{\mu_\kappa^j \otimes (F,\mu_\tfL^3)}
    + \sum_{\type' \in \mathrm{Loc}_{1,1}^{\type,\mathrm{s},\partial}}
     \avQ[\rho_j\type']{\mu_\kappa^j \otimes (F,\mu_\kappa^1,\mu_\tfL^1)}
    \Big]
  \end{align}
  up to an error $
  \O{\|F\|_\infty + \|F(\ell) e^{\ell}\|_\infty/g^{2-c \kappa}}$.
\end{prp}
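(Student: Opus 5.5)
The proof will mirror that of \cref{p:incl_s}, and the enumeration of cases is shorter because $\chi(S(\gamma)) = 1$ already. We first substitute $\1{\Atf}$ in \eqref{eq:av_cond} using \eqref{eq:atf_app}. The resulting error terms are controlled by \cref{lem:incl_exc_work_tang}, \cref{lem:Q_app} and \cref{lem:bound_j_inc}, which give $\O{\|F\|_\infty + \|F(\ell)e^\ell\|_\infty/g^{2-19\kappa}}$. Two sums are then left to express:
\begin{equation*}
  \sum_{j=0}^{\lfloor \log g \rfloor} (-1)^j \Ewp{\Ninj[,j,Q](X) \sum_{\gamma \sim \type} F(\ell_X(\gamma))}
  \quad \text{and} \quad
  \sum_{j=0}^{\lfloor \log g \rfloor} (-1)^{j+1} \Ewp{\Ninj[,j,Q](X)\Ntang(X) \sum_{\gamma \sim \type} F(\ell_X(\gamma))}.
\end{equation*}
We number the $j$-tuples of $\kappa$-short loops, which introduces the weight $\mu_\kappa^j$. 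We then regroup the terms according to the filling type $\Sf'$ of the multi-loop made of $\gamma$, the $\beta_i$ and, in the second sum, the boundary of the tangle.

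The key step is the argument from the proof of \cref{lem:incl_exc_work_tang}. Every configuration with $\chi(\Sf') \geq 2$ contributes $\O[\kappa]{\norminf{F(\ell)e^\ell}/g^{2-c\kappa}}$. To see this, we bound the number of $\gamma$ filling $Z$ using the Wu--Xue counting estimate, bound the tangles using the trivial count $\O{g^{\kappa}}$, and apply the bound on $\phi_{g,Q}^{\rho_j\Sf'}$ coming from \cref{lem:limit_rank_Q}, which is uniform in $j \le \log g$ (the decay is $g^{-\chi(\Sf')}$). A small point needs care here. The geodesic $\gamma$ fills a pair of pants, so it is not simple, and hence it is never equal to one of the $\beta_i^{\pm1}$. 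This means we do not need the preliminary truncation $\ell_X(\gamma) > \kappa$. (Such a truncation would in any case be harmless: $\gamma$ fills $S(\gamma)$ and has length at least $2\kappa$-ish, with the collar lemma handling the rest.)

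It then remains to enumerate the configurations with $\chi(\Sf') = 1$, that is, those where $S(\gamma)$ is the unique component of Euler characteristic $1$.

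\textbf{First sum.} Each $\beta_i$ is simple, so either it is disjoint from $S(\gamma)$, or it is a boundary component of $S(\gamma)$, or it intersects $\gamma$. In the last case the filled surface strictly enlarges, with absolute Euler characteristic at least $2$, unless the pair of pants closes up into a once-holed torus. That situation is exactly the one where two boundary components of $S(\gamma)$ are glued, and it falls into the $\chi = 1$ torus case. Such configurations correspond to $\beta_1$ of type $\type$ together with a simple $\beta_2$ filling $\Sf_{1,1}$. However, here $\beta_i$ is a boundary component of $S(\gamma)$, which is allowed. The boundary components of $S(\gamma)$ are recorded in the realization through $\phi_{g,Q}^{\rho_j\Sf}$, which sums over realizations including gluings to cylinders. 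So the only case left is $\rho_j\type$, giving the first term of \eqref{e:exp_TF_1_pop}.

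\textbf{Second sum.} The tangle $Y$ with $\chi(Y) = 1$ must coincide with a component of $\Sf'$ of Euler characteristic $1$, and this component must contain $S(\gamma)$. There are two options. If $Y$ is a pair of pants, then $Y = S(\gamma)$. Its three boundary lengths are then bounded by $\tfL$, and no $\beta_i$ can be interior to it, so we obtain $-\avQ[\rho_j\type^\partial]{\mu_\kappa^j\otimes(F,\mu_\tfL^3)}$. If $Y$ is a once-holed torus, then $S(\gamma) \subset Y$ with two boundary components glued. One of these glued boundary components must be $\kappa$-short, and it is the simple loop $\beta_k$ in the torus; otherwise the configuration is counted as a tangle-free torus containing $\gamma$. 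This gives the sum over $\mathrm{Loc}_{1,1}^{\type,\mathrm{s},\partial}$, with the sign flipped after the reindexing $j \to j-1$ (which uses $(-1)^{j+1}\cdot(-1) $ and $j\cdot\frac{1}{j!}=\frac{1}{(j-1)!}$).

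The main obstacle is this torus case. One has to check that a torus tangle containing $\gamma$ but with no short loop inside it does not yield an additional term. The resolution is the convention of \cref{nota:Ninj_Ntang}. In this situation the pair of pants $S(\gamma)$ is not counted as a tangle, because its boundary is not a multi-curve, and the torus term only appears through $\mu_\tfL^1$. Any $\beta_k$ lying inside must be simple and non-peripheral, and it fills the torus together with $\gamma$. Verifying that this torus contribution really cancels, or is absent, requires tracking exactly which surfaces $\Ntang$ counts. I would check this carefully before finalizing the three displayed terms.
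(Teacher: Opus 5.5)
\paragraph{Verdict.} Your skeleton matches the proof the paper has in mind when it says this proposition mirrors Proposition~\ref{p:incl_s}: substitute \eqref{eq:atf_app}, absorb the errors with Lemmas~\ref{lem:incl_exc_work_tang}--\ref{lem:bound_j_inc}, discard $\chi(\Sf')\ge 2$ as in Lemma~\ref{lem:incl_exc_work_tang}, and enumerate $\chi(\Sf')=1$. The gap is in how you handle the once-holed torus configurations. This is exactly where the pants case differs from the simple case.

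\paragraph{First sum.} Suppose two boundary components of the pants filled by $\gamma$ are the same geodesic $\delta$ of $X$, so that $S(\gamma)$ closes up into a once-holed torus $T$ of boundary length $x$. Let $\beta_k\subset T$ be a $\kappa$-short loop crossing $\delta$. Since $\gamma$ fills $T\setminus\delta$, every arc of $\beta_k$ joining the two sides of $\delta$ meets $\gamma$. So $(\gamma,\beta_k)$ fills $T$ and $\chi(\Sf')=1$. Moreover $\beta_k$ is \emph{not} a boundary component of $S(\gamma)$, and it is not encoded by any realization in $\phi_{g,Q}^{\rho_j\Sf}$, since those only glue cylinders disjoint from $\gamma$.

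Your sentence ``here $\beta_i$ is a boundary component of $S(\gamma)$'' therefore contradicts the case you are treating, and ``the only case left is $\rho_j\type$'' does not follow. The correct enumeration produces the analogue of the second term of \eqref{e:exp_TF_1}: a sum over the types $\eqc{\Sf_{1,1},(\gamma,\beta)}$ tested against $(F,\mu_\kappa^1)$. That term is not in the statement, so you must prove it fits in the error.

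Here is how that can be done.
\begin{itemize}
\item Each boundary component of a pair of pants filled by $\gamma$ has length at most $\ell(\gamma)$. This follows from planarity: no arc of $\gamma$ borders the same complementary region on both sides. Hence $x,\ell_\delta\le\ell(\gamma)\le L$. By contrast, a simple $\gamma$ crossing $\beta$ once has $x\approx 2\ell(\gamma)$, which is what makes $J_\kappa$ of order $e^{\ell}/g$.
\item Use $xV_{g-1,1}(x)/V_g=\mathcal{O}(e^{x/2}/g)$ and Fenchel--Nielsen coordinates adapted to $\delta$, with weight $\ell_\delta$ on a domain of size $\mathcal{O}(L^2)$.
\item Apply the pointwise bound $|F(\ell)|e^{\ell/2}/g\le \frac{\epsilon}{2}\|F\|_\infty+\frac{1}{2\epsilon g^{2}}\|F(\ell)e^{\ell}\|_\infty$, with $\epsilon$ a suitable negative power of $L$.
\end{itemize}
Together these bound the term by $\mathcal{O}(\|F\|_\infty+\|F(\ell)e^{\ell}\|_\infty/g^{2-c\kappa})$.

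\paragraph{Second sum.} You claim that one of the glued boundary components ``must be $\kappa$-short, and it is the simple loop $\beta_k$'' of $\mathrm{Loc}_{1,1}^{\type,\mathrm{s},\partial}$. This is wrong on both counts. First, $\delta$ need not be short. Second, in $\mathrm{Loc}_{1,1}^{\type,\mathrm{s},\partial}$ the simple loop fills $\Sf_{1,1}$ together with $\gamma$ and hence crosses it, whereas $\delta$ is disjoint from $\gamma$.

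Consider the configuration where $T\supset S(\gamma)$ is an $\tfL$-tangle and no short loop crosses $\gamma$, whether or not $\delta$ is one of the $\beta_i$. This is a genuine term of the second sum. $T$ is counted by $\Ntang$ as soon as $\ell(\partial T)\le\tfL$, so the term neither cancels nor is absent, and you leave it open.

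What saves the statement is that this term is small:
\begin{itemize}
\item $x\le\tfL$ gives $\int_0^{\tfL}xV_{g-1,1}(x)/V_g\d x=\mathcal{O}(g^{\kappa/2-1})$;
\item the sum over $\delta$ with $\ell_\delta\le\ell(\gamma)\le L$ contributes $\mathcal{O}(L^2\|F\|_\infty)$.
\end{itemize}
So the whole term is $\mathcal{O}(g^{\kappa/2-1}(\log g)^2\|F\|_\infty)=\mathcal{O}(\|F\|_\infty)$. The same bound shows that the displayed $\mathrm{Loc}_{1,1}^{\type,\mathrm{s},\partial}$ term is itself of negligible size.

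The three-term formula can therefore be reached along your lines, but only after these two torus contributions are correctly identified and estimated. As written, your proposal does neither.
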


\begin{figure}[h!]
  \centering
  \includegraphics[scale=0.8]{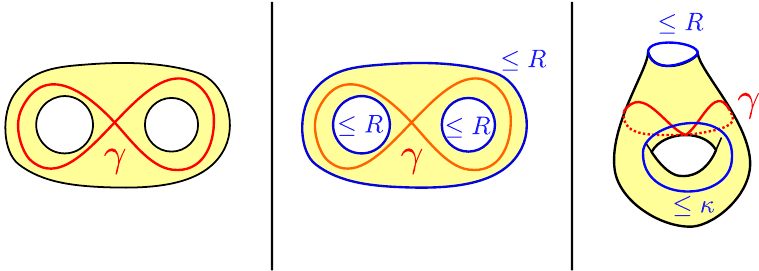}
  \caption{Illustration of the terms of \cref{p:incl_pop}.}
  \label{fig:incl_1}
\end{figure}

\subsection{Writing of these expectations as Friedman--Ramanujan functions}
\label{sec:proof-refch-these}

We are now able to prove the following proposition. This will allow us to apply the properties of
Friedman--Ramanujan functions, and in particular the cancellation argument presented in
\cite[Section 3.4]{Ours1}, in the context of the inclusion-exclusion.

\begin{prp}
  \label{prp:FR_incl_exclu}
  There exists a constant $c >0$ satisfying the following. For any small enough $\kappa >0$, any
  large enough $g$, $L = 6 \log(g)$, $\tfL=\kappa \log(g)$, any local topology~$\type$ filling a
  cylinder or a pair of pants, there exists a density function
  $A_{\type,g}^{\kappa} : \R_{> 0} \rightarrow \R$ satisfying the following. For any test
  function~$F$ of support included in $[0,L]$,
  \begin{align*}
    \av[\type]{F \, | \, X \in \Atf} =  \int_{0}^{+\infty} F(\ell) \,
      A_{\type,g}^{\kappa}(\ell) \d \ell
      + \O[\kappa]{\frac{\norminf{F(\ell) \, e^\ell}}{g^{2-c\kappa}} + g^{c \kappa} \norminf{F(\ell)
      e^{\frac \ell 2}}}.
  \end{align*}
  Furthermore, the function $\ell \mapsto \ell A_{\type,g}^{\kappa}(\ell)$ belongs in the weak
  Friedman--Ramanujan class~$\cF_w^{c,c}$, and its $\cF_w^{c,c}$-norm is $\O[\kappa]{g^{c \kappa}}$.
\end{prp}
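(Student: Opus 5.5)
We start from \cref{p:incl_s} (for $\type = \mathbf{s}$) and \cref{p:incl_pop} (for $\type$ filling a pair of pants). These already express $\av[\type]{F \, | \, X \in \Atf}$, up to an error $\O{\norminf{F} + \norminf{F(\ell)e^\ell}/g^{2-c\kappa}}$, as a finite combination of sums over $0 \leq j \leq \lfloor \log g \rfloor$ of $Q$-bounded averages $\avQ[\rho_j \type']{\mu_\kappa^j \otimes (F, \mu_1, \ldots)}$, where the auxiliary test functions are $\mu_\kappa^1$ or $\mu_\tfL^m$. Since $F$ is supported on $[0,L]$, the error $\norminf{F}$ is absorbed in $g^{c\kappa}\norminf{F(\ell)e^{\ell/2}}$. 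It therefore suffices to treat each such term separately. The density $A^\kappa_{\type,g}$ will be defined as the sum over $j$ and over the finitely many (or, for $\mathrm{Loc}^{2\mathrm{s}}_{1,1}$ and $\mathrm{Loc}^{\type,\mathrm{s},\partial}_{1,1}$, countably many) local types of the densities produced term by term.

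For a single term, we apply \cref{lem:express_average_j}, extended to multi-type averages as in \cite[Section 5.5]{Ours1}. This writes the term as $\int F(\ell)\, a_j(\ell) \d \ell$, where $a_j(\ell)$ is obtained by integrating $\mu_\kappa^j(\mathbf{y})\,\phi_{g,Q}^{\rho_j\Sf}(\x,\mathbf{y})$, times the remaining indicators of lengths, against the Weil--Petersson volume density of the filling type $\Sf$ of $\type'$ (a pair of pants, a once-holed torus, or cylinders). Next we expand $\phi_{g,Q}^{\rho_j\Sf}$ to first order in $1/g$, following \cite[Theorem 5.7 and Proposition 5.22]{Ours1}. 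We use \cref{lem:limit_rank_Q} to discard realizations of high rank with a bound uniform in $j \leq \log g$. Because the $\beta_i$ have lengths $\leq \kappa$, the factor $\prod y_i \1{[0,\kappa]}(y_i)/j!$ makes the $j$-sum converge: it contributes roughly $(C\kappa^2)^j/j!$ times polynomial factors. The resulting expansion is $f_0 + f_1/g + \O{\text{remainder}/g^2}$, with coefficients of the same shape as in \cite{Ours1}. The $\O{1/g^2}$ remainder, multiplied by the $e^{\ell}$ growth of geodesic counts, yields the $\norminf{F(\ell)e^\ell}/g^{2-c\kappa}$ error.

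For the main terms we invoke \cite[Theorem 1.13]{Ours1}: for a fixed local type $\type'$ with $\chi \leq 1$, $\ell f_k^{\type'}(\ell)$ is weakly Friedman--Ramanujan. Here, however, the extra variables (the boundary lengths of the tangle, and the short loops) are integrated against $\mu_\tfL^m$ and $\mu_\kappa^1$. The key observation is that integrating a Friedman--Ramanujan-type density in these auxiliary variables over a range of size $\tfL = \kappa\log g$ produces at most factors $e^{c\tfL} = g^{c\kappa}$ in the norm, while preserving the structure $p(\ell)e^{\ell} + \O{\ell^c e^{\ell/2}}$. Contributions from the part of the density that is not Friedman--Ramanujan are bounded by $g^{c\kappa}\norminf{F(\ell) e^{\ell/2}}$ and put into the error.

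The main obstacle is the infinite families $\mathrm{Loc}^{2\mathrm{s}}_{1,1}$ and $\mathrm{Loc}^{\type,\mathrm{s},\partial}_{1,1}$, and more generally uniformity over local types. There we use the tangle-free restriction, in the spirit of \cref{lem:TF_curves}: in the terms involving $\mu_\tfL$, or the short loop of length $\leq\kappa$, only types with $\O{(\log g)^{c}}$ intersection words contribute for lengths $\leq L$. We then need the Friedman--Ramanujan constants of \cite[Theorem 1.13]{Ours1} to grow at most polynomially in the word length, so that the sum of norms stays $\O[\kappa]{g^{c\kappa}}$. I would first try to obtain this uniformity by summing over types at the level of the explicit once-holed-torus volume integrals, rather than type by type.
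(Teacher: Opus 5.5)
Your reduction to the terms of \cref{p:incl_s} and \cref{p:incl_pop}, the absorption of $\norminf{F}$ into the error, and the treatment of the $j$-sum are fine. The gap is exactly where you place the main obstacle: the terms in which $\gamma$ crosses a $\kappa$-short loop $\beta$ inside a once-holed torus (the sums over $\mathrm{Loc}_{1,1}^{2\mathrm{s}}$, $\mathrm{Loc}_{1,1}^{2\mathrm{s},\partial}$ and $\mathrm{Loc}_{1,1}^{\type,\mathrm{s},\partial}$).

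Your proposed mechanism does not apply there, for two reasons. First, after the inclusion--exclusion no tangle-free condition remains inside these averages: $\1{\Atf}$ has been expanded into counting functions, and the terms carrying $\mu_\tfL$ count configurations that \emph{contain} an $\tfL$-tangle. So the long-boundary argument of \cref{lem:TF_curves} is unavailable. Second, grant that the collar lemma around $\beta$ limits the relevant types to polylogarithmically many. Even so, \cite[Theorem 1.13]{Ours1} concerns the expansion coefficients $f_k^{\type}$ of a fixed local type. It does not by itself cover densities of pairs $(\gamma,\beta)$ filling $\Sf_{1,1}$ with $\ell(\beta)$ integrated against $\1{[0,\kappa]}$. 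Nor does it give degree or norm bounds uniform (let alone polynomial) in the word length. Your argument therefore rests on an unproved uniformity assumption at its hardest point.

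The missing idea is that only one of these types needs any Friedman--Ramanujan analysis; all the others are error terms. Split according to the intersection number $i(\gamma,\beta)$.

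When $i(\gamma,\beta)=1$ there is a single local type. Take Fenchel--Nielsen coordinates $(\ell,\theta)$ adapted to $\gamma$ and unfold the sum over $\beta$ into $\theta\in\R$. Up to the convergent $j$-sum prefactor, the density reduces to $J_\kappa(\ell)=\int_0^{\infty}\int_{\R}\sinh(x/2)\,\1{[0,\kappa]}(\ell_{\ell,\theta,x}(\beta))\d x\d\theta$. An explicit change of variables writes this as $16\sinh^2(\ell/2)$ times an integral converging to its limit at rate $\O[\kappa]{e^{-\ell/2}}$, so it is Friedman--Ramanujan.

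When $|i(\gamma,\beta)|\geq 2$, parametrise $\gamma$ by Dehn--Thurston coordinates $(i,t)$ relative to $\beta$, with twist $\theta\in[0,y)$. The collar lemma gives $|i|\leq L/2$ and $|t|\leq L/y$. A double-filling argument gives $x\leq \ell(\gamma)+2y$ for the boundary length $x$. Hence the growth $V_{g-1,1}(x)/V_g=\O{e^{x/2}}$ is paid for by $|F(\ell)|\leq\norminf{F(\ell)e^{\ell/2}}e^{-\ell/2}$. All these infinitely many types together contribute $\O[\kappa]{L^4g^{\kappa}\norminf{F(\ell)e^{\ell/2}}}$, which goes straight into the error term. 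Your instinct to sum over types inside the explicit once-holed-torus integral is right, but the aim there is a crude bound, not a Friedman--Ramanujan expansion.

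Terms whose torus boundary is a tangle boundary, of length $\leq\kappa\log g$, are remainders by the same reasoning. For instance, for $i=1$ in $\mathrm{Loc}_{1,1}^{2\mathrm{s},\partial}$, the bounds $\theta\leq\ell(\beta)\leq\kappa$ and $\int_0^{\kappa\log g}\sinh(x/2)\d x$ give $\O[\kappa]{g^{\kappa/2}}$.

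Finally, for pair-of-pants types the constant $c$ must be uniform in $\type$, since it fixes $m$ in \cref{sec:proof_29}. That uniformity comes from the type-uniform \cite[Theorem 8.1]{Ours1}, applied to products of $f_1$, $f_1\1{[0,\kappa]}$ and $f_1\1{[0,\kappa\log g]}$. Each of these satisfies $f(\ell)/\sinh(\ell/2)=p(\ell)+\O[\kappa]{g^{c\kappa}(\ell+1)^{c}e^{-\ell/2}}$. The fixed-type \cite[Theorem 1.13]{Ours1} does not provide it.
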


\begin{proof}
  Let $\type = \eqc{\mathbf{P},\curve}$ for a loop $\curve$ filling the pair of pants
  $\mathbf{P}$. The length of $\curve$ in terms of the boundary lengths of $\mathbf{P}$ is given by
  a function $h : \R_{>0}^3 \rightarrow \R_{>0}$.  For $j \geq 0$, by \cref{lem:express_average_j},
  the average $\avQ[\rho_j \type]{ \mu_\kappa^j \otimes F}$ present in \eqref{e:exp_TF_1_pop} can be
  written as
  \begin{equation*}
    \frac{(-1)^j}{j!}
    \frac{1}{n(\type)}
    \int_{\R_{>0}^3} \int_{[0, \kappa]^{j}}
    F(h(\ell_1, \ell_2, \ell_3)) \,
    \phi_{g,Q}^{\rho_j \mathbf{P}}(\ell_1, \ell_2, \ell_3, \x, \x) 
     \d \ell_1 \d \ell_2 \d \ell_3 \d \x.
   \end{equation*}
   The filling type $\rho_j \mathbf{P}$ has absolute Euler characteristic $1$ and we will
     hence only need to compute the integral above at the leading order to estimate it up to errors
     decaying in $\O[\kappa]{\norminf{F(\ell)e^\ell}/g^{2-\kappa}}$. We first use
     \cref{lem:limit_rank_Q} to replace $\phi_{g,Q}^{\rho_j \mathbf{P}}$ by a sum over a fixed set
     of $\O[Q]{1}$ realizations of rank $1$. Let us now compute the contribution of these
   realizations of rank $1$. First, we compute the term of the connected realization, which is the
   case when we glue a surface of signature $(g-2-j,2j+3)$ to the pair of pants $\mathbf{P}$ and the
   $j$ cylinders. This yields a density function
  \begin{equation*}
    \frac{(-1)^j}{j!} \frac{1}{n(\type)}
    \int_{\star} \int_{[0, \kappa]^{j}} 
    \frac{V_{g-2-j,2j+3}(\ell_1, \ell_2, \ell_3, \x,\x)}{V_g}
    \ell_1 \ell_2 \ell_3
    \frac{\d \ell_1 \d \ell_2 \d \ell_3}{\d \ell}
    \prod_{i=1}^j x_i \d x_i
  \end{equation*}
  where the integral $\int_\star$ runs on the level-set
  $\{(\ell_1,\ell_2,\ell_3) : h(\ell_1, \ell_2, \ell_3) = \ell\}$. By
    \cref{theo:volume_asympt_exp}, we can reduce the problem to computing
  \begin{align*}
    & \frac{(-1)^j}{j!} \frac{1}{n(\type)} \frac{V_{g-2-j,2j+3}}{V_g} 
    \int_{\star}
    \int_{[0, \kappa]^{j}}
    \prod_{i=1}^3 2 \sinh \div{\ell_i} \prod_{i=1}^j \frac{4}{x_i} \sinh^2 \div{x_i}
       \frac{\d \ell_1 \d \ell_2 \d \ell_3}{\d \ell} \d \x \\
    & = \frac{(-1)^j I(\kappa)^j}{j!} \frac{1}{n(\type)} \frac{V_{g-2-j,2j+3}}{V_g}
      \int_{\star}
      \prod_{i=1}^3 f_1(\ell_i) 
      \frac{\d \ell_1 \d \ell_2 \d \ell_3}{\d \ell}
  \end{align*}
  where $f_1(x) = 2 \sinh \div{x}$ and
  $I(\kappa) := \int_{0}^\kappa 4\sinh^2 \div{x} \d x/x = \O{\kappa}$ for $\kappa \leq 1$.  We sum
  the terms for the different values of $j$ to obtain the corresponding density (the sum converges
  because $V_{g-2-j,2j+3}/V_g = \O{1}$ by \cite[Lemma 3.2]{mirzakhani2013}). We can similarly write the
  terms arising from the other realizations of rank $1$, which are obtained by adding some Dirac
  distributions. This yields additional terms, multiple of
  \begin{equation*}
    \int_{\star}
    f_1(\ell_i) \, \ell_j \delta(\ell_j-\ell_k)
    \frac{\d \ell_1 \d \ell_2 \d \ell_3}{\d \ell}
    \quad \text{for } \{i,j,k\}=\{1,2,3\}
  \end{equation*}
  but also, when we glue one or two cylinders to the pair of pants $\mathbf{P}$,
  \begin{equation*}
    \int_{\star}
    \prod_{i=1}^3 f_1(\ell_i) \prod_{i \in I} \1{[0,\kappa]}(\ell_i) 
    \frac{\d \ell_1 \d \ell_2 \d \ell_3}{\d \ell}
    \quad \text{for } I \subseteq \{1,2,3\},
  \end{equation*}
  or even a mixture of both, for instance if we glue two components of $\mathbf{P}$ to the same
  cylinder.  We notice that $f_1$ and $f_1 \1{[0,\kappa]}$ both satisfy
  \begin{equation*}
    \frac{f(\ell)}{\sinh \div{\ell}} = p(\ell)
    + \O[\kappa]{e^{- \ell/2}}
  \end{equation*}
  for the (constant) polynomial $p=2$ or $0$ respectively, because
  $\1{[0,\kappa]} = \O[\kappa]{e^{-\ell/2}}$. Then, \cite[Theorem 8.1]{Ours1}
  applied to the local type $\type$ allow us to conclude that these densities are indeed
  Friedman--Ramanujan in the weak sense. Since we systematically integrate only functions that are
  products of $f_1$ and $f_1 \1{[0,\kappa]}$, i.e. there are $\leq 2^3$ possible products, and by
  the uniformity in \cite[Theorem 8.1]{Ours1} with respect to the local type, we further obtain that the
  resulting function lies in $\cF_w^{c,c}$ for a constant $c$, and that its norm is $\O[\kappa]{1}$.

  The computation is the same for the average
  $\avQ[\rho_j\type^\partial]{\mu_\kappa^j \otimes (F,\mu_\tfL^3)}$, with the difference that now,
  we sometimes multiply by the indicator function
  $\1{[0, \kappa \log(g)]}(\ell) = \O{g^{\kappa/2} e^{-\ell/2}}$ coming from the tangles, and hence
  there is $\leq 3^3$ possible products. We then obtain, once again, a uniform bound on the
  Friedman--Ramanujan degree, but the norm is now $\O[\kappa]{g^{3 \kappa/2}}$. We will explain
  further why the contribution of the last terms
  $\sum_{\type' \in \mathrm{Loc}_{1,1}^{\type,\mathrm{s},\partial}} \avQ[\rho_j\type']{\mu_\kappa^j
    \otimes (F,\mu_\kappa^1,\mu_\tfL^1)}$ can be put in the error term, which is then enough to conclude in
  the case of a type $\type$ filling a pair of pants.

Let us now consider the local topology ``simple''. Similarly, we examine each term
in~\cref{p:incl_s} one after the other; first, let us treat all the terms except the terms
containing the summations over $\mathrm{Loc}_{1,1}^{2\mathrm{s}}$ and
$\mathrm{Loc}_{1,1}^{2\mathrm{s},\partial}$. We use the asymptotic expansion
\cref{theo:volume_asympt_exp} at the second order ($\ord=1$). This allows us to express the product
of the density and the function $\ell \mapsto \ell$ as a linear combination of functions
$\ell \mapsto f^{(\kappa)}_{m_1}(\ell) f^{(\kappa)}_{m_2}(\ell)$ with bounded coefficients,
where the family of functions $(f_i^{(\kappa)})_{1 \leq i \leq 9(a_2+1)}$ consists of all products
  of the functions
$$\ell \mapsto \ell^k \cosh \div{\ell} \qquad
\ell \mapsto \ell^k \sinh \div{\ell} \qquad \ell \mapsto \ell^k$$ where $0 \leq k \leq a_2$, with
the functions $1$, $\1{[0,\kappa]}$ and $\1{[0, \kappa \log(g)]}$. We check all of these
functions satisfy
  \begin{equation}
    \label{eq:cond_fi_kappa}
    \frac{f(\ell)}{\sinh \div{\ell}} = p(\ell)
    + \O[\kappa]{g^{c \kappa} (\ell+1)^c e^{- \ell/2}}
  \end{equation}
  for a polynomial $p$ and a constant $c>0$, which is enough to conclude for these terms.

  Now, the last cases we need to consider are the cases where we sum over local topological types in
  $\mathrm{Loc}_{1,1}^{2\mathrm{s}}$, $\mathrm{Loc}_{1,1}^{2\mathrm{s},\partial}$ as well as
  $\mathrm{Loc}_{1,1}^{\type,\mathrm{s},\partial}$ for a local type $\type$ filling a pair of
  pants. These terms are different because they correspond to situations where the geodesic $\gamma$
  has an intersection with one of the $\kappa$-short loops.

  Let us first look at the term $\avQ[\rho_j \type]{\mu_\kappa^j \otimes (F,\mu_\kappa)}$ in
  \cref{p:incl_s}, where $\type \in \mathrm{Loc}_{1,1}^{2\mathrm{s}}$.  As a first step, we compute
  the density appearing for the local topology $\eqc{\Sf_{1,1},(\gamma, \beta)}$, where
  $(\gamma, \beta)$ is a pair of simple loops on $\Sf_{1,1}$ that intersect exactly once.  We pick
  Fenchel--Nielsen coordinates $(\ell, \theta)$ on the once-holed torus associated to the geodesic
  $\gamma$, with the origin of twist taken for $\beta$. As a consequence, the density we wish to
  compute can be expressed as:
  \begin{equation*}
    \sum_{j=0}^{+\infty} \frac{(-1)^j}{j!} 
    \int_{0}^{+\infty} \int_{- \infty}^{+ \infty}
    \frac{x x_1 \ldots x_jV_{g-1-j,1+2j}(x,x_1, x_1, \ldots, x_j, x_j)}{V_g}
    \1{[0,\kappa]}(\ell_{\ell,\theta,x}(\beta))
    \d x \d \theta
  \end{equation*}
  where $\ell_{\ell,\theta,x}(\beta)$ denotes the length of $\beta$ in the once-holed torus of
  boundary length $x$ with Fenchel--Nielsen coordinates $(\ell, \theta)$.  By
  \cref{theo:volume_asympt_exp} with $\ord=0$, we need to compute
  \begin{equation*}
    \sum_{j=0}^{+\infty} \frac{(-1)^j I(\kappa)^j}{j!} \frac{V_{g-1-j,1+2j}}{V_g}
    \int_{0}^{+\infty} \int_{- \infty}^{+ \infty}
    \sinh \div{x}  \1{[0,\kappa]}(\ell_{\ell,\theta,x}(\beta))
    \d x \d \theta.
  \end{equation*}
  It is therefore enough to prove that the integral
  \begin{equation*}
    J_\kappa(\ell) :=     \int_{0}^{+\infty} \int_{- \infty}^{+ \infty}
    \sinh \div{x}  \1{[0,\kappa]}(\ell_{\ell,\theta,x}(\beta))
    \d x \d \theta
  \end{equation*}
  is a Friedman--Ramanujan function.

  Let us equip $\Sf_{1,1}$ with the metric associated with the parameters $(x, \ell, \theta)$. We
  denote as $p$ the length of the orthogeodesic linking the two boundaries of length $\ell$ on the
  pair of pants $\Sf_{1,1} \setminus \gamma$, and $r$ the length of $\beta$ for this metric. Then,
  \begin{equation*}
    \begin{cases*}
      \cosh \div{\theta} = \cosh \div{r} /\cosh \div{p} \\
      \cosh \div{x} = \cosh(p) \sinh^2 \div{\ell} - \cosh^2 \div{\ell}.      
    \end{cases*}
  \end{equation*}
  We perform the successive changes of variables
  $(x,\theta) \rightarrow (r,p) \rightarrow (R,P)$ for $R:=\cosh \div{r}$ and
  $P := \cosh \div{P}$, which yield
  \begin{equation*}
    J_\kappa(\ell)
    = 16 \sinh^2 \div{\ell}
    \iint_{\frac{1}{\tanh \div{\ell}} <P < R < \cosh \div{\kappa}} \frac{\d R \d P}{\sqrt{(R/P)^2-1}}.
  \end{equation*}
  We then check that the integral above converges to a finite limit as $\ell \rightarrow +\infty$,
  and that the difference between the integral and its limit is $\O[\kappa]{e^{-\ell/2}}$. This
  implies that $J_\kappa$ is a Friedman--Ramanujan function.

  Same goes when we examine the term $\eqc{\Sf_{1,1}, (\gamma, \beta, \partial \Sf_{1,1})}$ in the
  sum over $\mathrm{Loc}_{1,1}^{2\mathrm{s},\partial}$. Indeed, the expression is the same, except
  there is an additional indicator function $\1{[0, \kappa \log(g)]}(x)$. We note that
  $\theta \leq r \leq \kappa$, and hence the integral we need to compute is smaller than
  \begin{equation*}
    \int_{0}^{\kappa \log(g)}  \sinh \div{x} \d x
    \int_{-\kappa}^{\kappa} \d \theta = \O[\kappa]{g^{\kappa/2}},
  \end{equation*}
  which implies that the corresponding density is an element of $\FRrem$.

  In order to conclude, we now prove that the contributions of all other local topological types in
  $\mathrm{Loc}_{1,1}^{2\mathrm{s}}$, $\mathrm{Loc}_{1,1}^{2\mathrm{s},\partial}$ and
  $\mathrm{Loc}_{1,1}^{\type,\mathrm{s},\partial}$ are
  $\O[\kappa]{g^{c\kappa} \norminf{F(\ell) e^{\ell/2}}}$. We only provide the details for the first
  case. From the cases above, we know that the summation over $j$ is not an issue, so we shall only
  examine the case $j=0$.

  Let us fix a simple loop $\beta$ on $\Sf_{1,1}$. Then, $\beta$ is a pair of pants decomposition of
  $\Sf_{1,1}$, and therefore we can use Dehn--Thurston coordinates to enumerate all simple loops
  on~$\Sf_{1,1}$ (see \cite{penner1922}). As a consequence, the data of a simple loop $\gamma$ is
  entirely determined by the pair of integers $(i, t)$, where $i$ is the intersection number of
  $\gamma$ and $\beta$, and $t$ the number of twists of $\gamma$ around $\beta$. We denote as $x$
  the length of $\partial \Sf_{1,1}$ and $y$ the length of $\beta$. Then, the quantity we wish to
  estimate can be expressed as
  \begin{equation}
    \label{eq:proof_prop_density_inclexcl_pair_inter_1}
    \int_{0}^{+ \infty} \int_0^\kappa \int_{0}^{y}
    \sum_{(i,t)} F(\ell_{x,y,\theta}(\gamma_{i,t})) \frac{V_{g-1,1}(x)}{V_g} x \d x \d y \d \theta 
  \end{equation}
  where $(i,t)$ runs over a subset of $\Z^2$, $\gamma_{i,t}$ is the simple loop of Dehn--Thurston
  coordinates $(i,t)$, and $\ell_{x,y,\theta}(\gamma_{i,t})$ denotes its length for the metric
  $(x,y,\theta)$. Note that the twist parameter~$\theta$ is only taken in the interval $[0,y)$,
  because for any $(i,t)$, the image of $\gamma_{i,t}$ by a Dehn twist around $\beta$ corresponds to
  another term $(i,t')$ in the sum.

  We use the usual bound $V_{g-1,1}(x)/V_{g} = \O{e^{x/2}}$, and make the following observations. On
  the one hand, for small enough $\kappa$, the neighbourhood of width $1$ around $\beta$ is a
  cylinder, by the collar lemma. It follows that the number of intersections between any geodesic
  $\gamma_{i,t}$ of length $\leq L$ and $\beta$ satisfies $i \leq L/2$. Similarly, we have that
  $t \leq L/y$.  On the other hand, $i \neq 0$ because $\beta$ and $\gamma$ are two simple distinct
  loops on $\Sf_{1,1}$, and $i \neq \pm 1$ because that is the case we treated above. Hence,
  $\abso{i} \geq 2$. It is easy to prove, using arguments similar to the double-filling arguments in
  \cite[Section 8.1]{Ours1}, that this implies that $x \leq \ell_{x,y,\theta}(\gamma_{i,t})+2
  y$. Altogether, these bounds allow us to say that the quantity
  \eqref{eq:proof_prop_density_inclexcl_pair_inter_1} is bounded, in absolute value, by
  \begin{align*}
     \norminf{F(\ell) e^{\frac \ell 2}} \int_0^{L+2 \kappa} \int_0^\kappa \int_0^y \frac{L^2}{y}
      \, x \, e^y \d x \d y \d \theta 
    = \O[\kappa]{L^4 g^\kappa \norminf{F(\ell) e^{\frac \ell 2}}}
  \end{align*}
  which is enough to conclude.
\end{proof}

\section{Proof of the main result}
\label{sec:proof_29}

We now have all the elements needed to proceed to the proof of \cref{thm:29}.

\begin{proof}
  Let $\epsilon > 0$. Let us take a free parameter $\kappa \in (0,1)$ (we shall make further
  assumptions on $\kappa$ throughout the proof). Then, for $\tfL=\kappa \log g$,
  \begin{align*}
    & \Pwp{\lambda_1 \leq \frac 29 - \epsilon} \\
    & \leq
     \Pwp{\kappa \leq \lambda_1 \leq \frac 29 - \epsilon \text{ and } X \in
      \Atf}
    + \Pwp{X \notin \Atf} + \Pwp{\lambda_1 \leq \kappa}.
  \end{align*}
  We saw in \cref{lem:prob_TF} that
  $\Pwp{X \notin \Atf} = \O{\kappa^2 + g^{\frac 32 \kappa -1}}$. Furthermore,
  Mirzakhani proved in \cite[Theorem 4.7]{mirzakhani2013} that, provided that $\kappa < 0.002$,
  $\Pwp{\lambda_1 \leq \kappa}$ goes to zero as $g \rightarrow + \infty$. As a consequence, if we
  prove that, for any small $\kappa >0$,
  \begin{equation}
    \label{eq:29_add_TF}
    \lim_{g \rightarrow + \infty}
    \Pwp{\kappa \leq \lambda_1 \leq \frac 29 - \epsilon \text{ and } X \in \Atf}
    = 0
  \end{equation}
  then we obtain that 
  \begin{equation*}
    \limsup_{g \rightarrow + \infty} \Pwp{\lambda_1 \leq \frac 29 - \epsilon}
    = \O{\kappa^{2}}.
  \end{equation*}
  We will therefore be able to deduce \cref{thm:29} by letting $\kappa$ go to zero.
  
  Now, let $h_L(\ell) = h(\ell/L)$ denote the test function from \cite[Notation 3.8]{Ours1}
  constructed by dilatation from the smooth even function $h : \R \rightarrow \R_{\geq 0}$ supported
  on $[-1,1]$ such that $\hat{h}$ is non-negative on $\R \cup i [- \frac 12, \frac 12]$.
  We consider the length-scale $L:=6 \log (g)$. Let us fix an integer $m \geq 0$, to be determined
  later. Motivated by the discussion in \cite[Section 3.4]{Ours1}, we shall apply the Selberg trace
  formula to the function $H_{L,m} := \D^m h_L$, where $\D = 1/4 - \partial^2$.

  We use \cite[Lemma 3.11]{Ours1} to bound $\hat{h}_L(r_1)$ in the event that
  $\lambda_1 \leq 2/9 - \epsilon$. Indeed, we have $2/9 = 1/4 - (1/6)^2$, and hence taking
  $\alpha = 1/6$, we obtain that provided $\epsilon$ is small enough, there exists a constant
  $C_\epsilon > 0$ such that, for any $X$ satisfying $\lambda_1(X) \leq 2/9 - \epsilon$,
  \begin{equation*}
    \hat{h}_{L}(r_1(X)) \geq C_{\epsilon} \, e^{(\alpha+\epsilon) L}
    = C_{\epsilon} \, g^{1+6\epsilon}.
  \end{equation*}
  It follows that, if $\kappa \leq \lambda_1 \leq 2/9 - \epsilon$, then the Fourier transform
  $r \mapsto (1/4+r^2)^m\hat{h}_L(r)$ of $H_{L,m}$, taken at $r=r_1$, is larger than
  $C_{\epsilon}\kappa^m \, g^{1+6\epsilon}$.  As a consequence, we can write
  \begin{align*}
    \Pwp{\kappa \leq \lambda_1(X) \leq \frac 29 - \epsilon \text{ and } X \in
      \Atf} 
    & \leq \Pwp{\hat{H}_{L,m}(r_1) \, \1{\Atf} \geq
      C_\epsilon \, \kappa^m
      g^{1+6\epsilon}}  \\
     & \leq \frac{\Ewpo \big[ \hat{H}_{L,m}(r_1) \, \1{\Atf} \big]}
      {C_\epsilon \, \kappa^m \, g^{1+6\epsilon}} 
  \end{align*}
  by Markov's inequality (here, we have used the fact that $\hat{H}_{L,m}(r_1) \geq 0$, because
  $\lambda_1 \geq 0$ and, by hypothesis, $\hat{h}_{L}(r_1) \geq 0$). This reduces the problem to
  finding an integer $m$ such that, for any small enough $\kappa$,
  \begin{equation}
    \label{eq:reformulation_proof_29}
    \Ewpo \big[ \hat{H}_{L,m}(r_1) \, \1{\Atf} \big] =
    \O[\epsilon,\kappa]{g^{1+5\epsilon}}
  \end{equation}
  which shall now be our objective.

  By positivity of $\hat{H}_{L,m}$ on $\R \cup i[-1/2, 1/2]$, the expectation
  $\Ewpo \big[ \hat{H}_{L,m}(r_1) \, \1{\Atf} \big]$ is smaller than the
  expectation of the Selberg trace formula applied to $H_{L,m}$, multiplied by the indicator of
  $\Atf$. Following the proof of \cite[Lemma 3.12]{Ours1} with this additional
  indicator function, we prove that
  \begin{equation}
    \label{eq:proof_29_after_selberg}
    \begin{split}
      & \Ewpo \big[ \hat{H}_{L,m}(r_1) \, \1{\Atf} \big]\\
      & \leq  \Ewp{\sum_{\gamma \in \mathcal{G}(X)}
      \frac{\ell_X(\gamma) \, H_{L,m}(\ell_X(\gamma))}{\exp \div{\ell_X(\gamma)}}
      \1{\Atf(X)}} + \O{(\log g)^2 g}.
    \end{split}
  \end{equation}

  {
    Using the uniform counting bound on geodesics, \cite[Lemma 2.2]{Ours1},
    as in the proof of \cite[Theorem 6.1]{Ours1}, we find a constant
  $\chic$ such that
  \begin{equation*}
     \Ewp{\sum_{\substack{\gamma \in \mathcal{G}(X) \\ \chi(S(\gamma)) > \chic}}
       \frac{\ell_X(\gamma) \, H_{L,m}(\ell_X(\gamma))}{\exp \div{\ell_X(\gamma)}}\1{\Atf(X)}}
       = \O{g}.
  \end{equation*}
  Hence, the sum in \cref{eq:proof_29_after_selberg} can be reduced to geodesics filling a surface
  of absolute Euler characteristic $\leq \chic$.}

  For every filling type $\Sf$ such that $2 \leq \chi(\Sf) \leq \chic$, we use Wu--Xue's counting
  result, \cite[Theorem 2.4]{Ours1}, together with \cite[Lemma 5.21]{Ours1} for $\ord=\chi(\Sf)$ to
  obtain that
  \begin{equation}
    \label{eq:proof_29_1}
    \Ewp{\sum_{\gamma : S(\gamma) = \Sf}
      \frac{\ell_X(\gamma) \, H_{L,m}(\ell_X(\gamma))}{\exp \div{\ell_X(\gamma)}}
      \1{\Atf(X)}}
    = \O[\epsilon]{\frac{e^{(1+\epsilon)L/2}}{g^2}}
    = \O[\epsilon]{g^{1+3\epsilon}}.
  \end{equation}
  The number of such filling types is $\O{1}$. As a consequence, equations
  \eqref{eq:proof_29_after_selberg} and \eqref{eq:proof_29_1} together imply that
  $\Ewpo \big[ \hat{H}_{L,m}(r_1) \, \1{\Atf} \big]$ is smaller than
  \begin{equation}
    \label{eq:proof_29_remove_high_euler}
    \Ewp{\sum_{\gamma : \chi(S(\gamma)) \leq 1}
      \frac{\ell_X(\gamma) \, H_{L,m}(\ell_X(\gamma))}{\exp \div{\ell_X(\gamma)}}
      \1{\Atf(X)}} + \O[\epsilon]{g^{1+3\epsilon}}.
  \end{equation}

  We shall now split this sum by local topological type. In order to do so in a
  controlled way, we apply \cref{lem:TF_curves}. This gives us a set
  $\mathrm{Loc}_g^{\kappa}$ of local topologies such that, as soon as
  $X \in \Atf$, all closed geodesics shorter than $L=6 \log g$ filling a
  pair of pants or once-holed torus are locally equivalent to a type in
  $\mathrm{Loc}_g^{\kappa}$, and
  \begin{equation}
    \label{eq:proof_29_5}
    \# \mathrm{Loc}_g^{\kappa} = \O[\kappa]{(\log g)^{c_\kappa}}
    = \O[\kappa,\epsilon]{g^\epsilon}.
  \end{equation}
  The bound \eqref{eq:proof_29_remove_high_euler} can then be rewritten as
  \begin{equation}
    \label{eq:proof_29_2}
    \Ewpo \big[ \hat{H}_{L,m}(r_1) \, \1{\Atf} \big]
    \leq \sum_{\type \in \mathrm{Loc}_g^{\kappa} \cup \{\mathbf{s}\}} \mathrm{Av}_{g,m,\kappa}(\type) + \O[\kappa,\epsilon]{g^{1+3\epsilon}}
  \end{equation}
  where for any $\type \in \mathrm{Loc}_g^{\kappa} \cup \{\mathbf{s}\}$,
  \begin{equation}
    \label{eq:proof_29_3}
    \mathrm{Av}_{g,m,\kappa}(\type)
    := \Ewp{\sum_{\substack{\gamma \sim \type \\ \ell_X(\gamma) \leq L}}
      \frac{\ell_X(\gamma) \, H_{L,m}(\ell_X(\gamma))}{\exp \div{\ell_X(\gamma)}}
      \1{\Atf(X)}}.
  \end{equation}

  Let $\type \in \mathrm{Loc}_g^{\kappa} \cup \{\mathbf{s}\}$.
  \begin{itemize}
  \item If $\type$ fills a once-holed torus, then by \cite[Proposition 8.8]{Ours1}, $\type$
    is a double-filling loop. We then bound naively
    \begin{equation}
      \label{eq:proof_29_1_1}
      \abso{\mathrm{Av}_{g,m,\kappa}(\type)}
      \leq \avb[\type]{\abso{H_{L,m}(\ell)} \,\ell \, e^{- \frac \ell 2}}
    \end{equation}
    and use our asymptotic expansion, \cite[Theorem 5.15]{Ours1}, at the order
    $2$ with $\epsilon'=\epsilon/2$, to write
    $\avb[\type]{\abso{H_{L,m}(\ell)} \,\ell \, e^{- \frac \ell 2}}$ as
    \begin{equation*}
       \int_{0}^{+ \infty} \abso{H_{L,m}(\ell)} e^{- \frac \ell 2}
        \paren*{\ell f_0^{\type}(\ell) + \frac{\ell
            f_1^\type(\ell)}{g}} \d \ell
      + \O[\epsilon]{\frac{\norminf{\ell H_{L,m}(\ell) e^{\frac{(1+\epsilon)\ell}{2}}}}{g^{2}}}.
    \end{equation*}
    The remainder is $\O[\epsilon,m]{g^{1+4\epsilon}}$ by definition of $H_{L,m}$ and since
    $L=6 \log(g)$.  By \cite[Proposition 8.5]{Ours1}, there exists a constant $c_1>0$ independent of
    $\type$ such that $f_0^\type$ and $f_1^\type$ belong in $\FRrem_w^{c_1}$ and their weak
    Friedman--Ramanujan norm is $\leq c_1$. It then follows directly by the definition of
    $\FRrem_w^{c_1}$ that
    \begin{equation}
      \label{eq:proof_29_s_1_1_conclusion}
      \mathrm{Av}_{g,m,\kappa}(\type) = \O[\epsilon,m]{\norminf{(\ell+1)^{c_1}
          H_{L,m}} + g^{1+4\epsilon}}
      = \O[\epsilon,m]{g^{1+4\epsilon}}.
    \end{equation}
  \item If $\type$ is simple or fills a pair of pants, we apply
    \cref{prp:FR_incl_exclu}. We obtain that there exists a constant $c_2>0$
    such that 
    \begin{equation}
      \label{eq:proof_29_0_3}
      \mathrm{Av}_{g,m,\kappa}(\type) =
      \int_{0}^{+\infty}  \D^m h_L(\ell) \, e^{- \frac \ell 2}
      \, \ell A_{\type,g}^{\kappa}(\ell) \d \ell
      + \O[\kappa,m]{g^{1+c_2 \kappa}}, 
    \end{equation}
    where $\ell \mapsto \ell A_{\type,g}^{\kappa}(\ell)$ belongs in $\cF_w^{c_2,c_2}$ and has weak
    Friedman--Ramanujan norm $\O[\kappa]{g^{c_2 \kappa}}$. We now specify the value of the parameter
    $m$ to be $m := \lceil c_2 \rceil$, so that we can use the cancellation properties of
    Friedman--Ramanujan functions. More precisely, by \cite[Proposition 3.17]{Ours1},
    \begin{align*}
       \int_{0}^{+\infty} \D^m h_L(\ell) \, e^{- \frac \ell 2}
      \ell A_{\type,g}^{\kappa}(\ell) \d \ell
      = \O[\kappa]{g^{c_2 \kappa} (L+1)^{c_2+1}}
      = \O[\epsilon, \kappa]{g^{2 \epsilon}}
    \end{align*}
    as soon as $\kappa < \epsilon/c_2$.
    Together with \eqref{eq:proof_29_0_3}, this implies that for small enough $\kappa$,
    \begin{equation}
      \label{eq:proof_29_s_0_3_conclusion}
      \mathrm{Av}_{g,m,\kappa}(\type)
      = \O[\epsilon,\kappa]{g^{1+\epsilon}}.
    \end{equation} 
  \end{itemize}
  Then, equations \eqref{eq:proof_29_5}, \eqref{eq:proof_29_s_1_1_conclusion}
  and \eqref{eq:proof_29_s_0_3_conclusion} together imply that
  \begin{equation*}
    \sum_{\type \in \mathrm{Loc}_g^{\kappa} \cup \{\mathbf{s}\}} \mathrm{Av}_{g,m,\kappa}(\type)
    = \O[\epsilon,\kappa]{g^{\epsilon+1+4\epsilon}} = \O[\epsilon]{g^{1+5 \epsilon}}.
  \end{equation*}
  Then, by \cref{eq:proof_29_2},  for small enough $\kappa$,
  \begin{equation*}
    \Ewpo \big[ \hat{H}_{L,m}(r_1) \, \1{\Atf} \big]
    = \O[\epsilon,\kappa]{g^{1+5\epsilon} + g^{1+3 \epsilon}}
    = \O[\epsilon,\kappa]{g^{1+5 \epsilon}}
  \end{equation*}
  which is exactly what was needed to conclude in light of
  \cref{eq:reformulation_proof_29}.
\end{proof}

\appendix

\section{Discarding sets of extremely small probability}
\label{s:A3}

In this appendix, we exhibit a set of extremely small probability in the moduli space, i.e. surfaces
containing a multi-curve of small lengths disconnecting the surface in many connected components.

By ``extremely small probability'', we mean probability $\cO(g^{-N})$ where we can tune the
parameters to make the exponent $N$ as large as we need.  This stands in contrast to the set of
surfaces containing tangles, which has probability of order $\kappa^2 + g^{\frac 3 2 \kappa-1}$.

The reason why sets of ``extremely small probability'' can easily be discarded in the trace method
is that they contribute a negligible amount to our trace averages. This comes from the classical
deterministic counting estimate for periodic geodesics, see e.g. \cite[Lemma 2.2]{Ours1}.  An
example of this phenomenon is the use of \cite[Proposition 6.4]{Ours1} to prove \cite[Theorem
6.1]{Ours1}. We here exhibit another event of extremely small probability, having short multi-curves
which separate the surface of genus $g$ into many components.

\begin{nota}
  For an integer $Q$ and a hyperbolic surface $X$, we denote as ${\MC}_X(Q)$ the set of multi-curves
  which separate the surface $X$ into at most $Q$ connected components.
\end{nota}

We recall that by multi-curve we mean family of simple disjoint loops which are not homotopic to one
another.  We shall estimate the probability of the following events.
 
\begin{nota}
  For an integer $Q \geq 1$, $g \geq 2$ and $0 < \kappa <1$, let  
  \begin{align*}\cB_g^{ \kappa, Q}
    :=
    \left\{ X\in \cM_g \, : \,
    \forall \gamma
    \text{ multi-curve on } X, \ell^{\mathrm{max}}(\gamma) \leq \kappa \Rightarrow
    \, \gamma \in \MC_X(Q)
    \right\}
  \end{align*}
  where $\ell^{\mathrm{max}}$ denotes the length of the longest component of the multi-curve $\gamma$.
\end{nota}

We shall prove the following bound on the probability of the event $\cB_g^{\kappa,Q}$.

\begin{prp}\label{l:PBN}
  For any $Q \geq 1$, $g \geq 2$ and $0 < \kappa < 1$, 
  \begin{align}\label{e:PBN}
    1-\Pwp{ \cB_g^{ \kappa, Q}} = \O[\kappa, Q]{\frac{1}{g^{Q-1}}}.
 \end{align}
\end{prp}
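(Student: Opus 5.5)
The plan is a first-moment bound. If $X \notin \cB_g^{\kappa,Q}$, then $X$ carries a multi-curve $\gamma$ with $\ell^{\mathrm{max}}(\gamma) \leq \kappa$ that cuts $X$ into at least $Q+1$ components. We first simplify $\gamma$. Fix $Q+1$ of the components of $X \setminus \gamma$, and merge the remaining ones into these by deleting curves of $\gamma$. Then choose a minimal sub-multi-curve $\gamma' \subseteq \gamma$ separating $X$ into exactly $Q+1$ components $X_1, \ldots, X_{Q+1}$. Minimality gives two properties: every curve of $\gamma'$ bounds two distinct components, and the dual graph $G$ (vertices $X_i$, edges the curves of $\gamma'$) has $Q+1$ vertices.

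Next we bound the number of curves in $\gamma'$. The graph $G$ has $Q+1+b_1(G)-1$ edges, and $b_1(G)$ is not controlled a priori. To control it, we only keep a spanning-tree-like amount of data. The event $X \notin \cB_g^{\kappa,Q}$ is contained in a union over the \emph{topological types} of pairs $(\gamma', \text{decomposition})$, and we bound the probability by the expectation
\begin{equation*}
  \Ewpo\Big[\#\{\gamma' \text{ of given type}, \ \ell^{\max}(\gamma') \leq \kappa\}\Big].
\end{equation*}
We apply Mirzakhani's integration formula to this expectation, exactly as in \cite[Section 4]{mirzakhani2013} and the proof of \cite[Proposition 6.4]{Ours1}. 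This gives
\begin{equation*}
  \frac{1}{V_g}\int_{[0,\kappa]^{k}} \prod_{i=1}^{Q+1} V_{g_i,n_i}(\x^{(i)}) \, x_1\cdots x_k \d \x ,
\end{equation*}
where $k$ is the number of curves, $\sum_i (2g_i-2+n_i) = 2g-2$, and each $n_i \geq 1$. Since $\kappa<1$, the volume polynomials are bounded by their values at $0$ up to a factor $e^{O(k\kappa)}$, and the $x_i$ integrals contribute $\kappa^{2k}/2^k$.

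The key input is the product estimate from \cite{mirzakhani2013} (Lemma 3.2 and Theorem 3.5 there, or the version in \cite{Ours1}):
\begin{equation*}
  \sum \prod_i V_{g_i,n_i} \leq C^{Q} \, \frac{V_g}{g^{Q}},
\end{equation*}
where the sum runs over decompositions into $Q+1$ pieces with $k$ gluing curves. Splitting into $m$ connected pieces costs $g^{-(m-1)}$ relative to $V_g$, uniformly with a geometric-type factor in the number of extra boundaries. Here $m=Q+1$, which gives $g^{-Q}$. This is even better than the claimed $g^{-(Q-1)}$; the extra slack absorbs combinatorial factors. Mirzakhani's bounds on $\sum V_{g_1,n_1}V_{g_2,n_2}$ with many shared boundaries iterate over the tree of components.

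The main obstacle is uniformity in $k$, the number of curves, which can be as large as $3g-3$. The terms need to be summable in $k$. Each extra curve beyond the $Q$ needed for the tree contributes a factor of order $\kappa^2$ times the ratio $V_{g',n'+2}/V_{g'+1,n'} = O(1)$. We would rather avoid that infinite sum. So we first reduce $\gamma'$ further: inside a decomposition we keep, for each edge of a spanning tree of $G$, the whole collection of curves between the two adjacent components. We then bound the counting by the number of components rather than curves, using the estimate $V_{g,n+2}/V_{g+1,n}$ bounded and the factor $\kappa^{2}/2 \cdot C$ per curve. This yields a convergent series $\sum_k (C\kappa^2)^k$ for small $\kappa$. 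For general $\kappa<1$ the constant simply depends on $\kappa$, since we may use the crude bound that at most $3g-3$ curves are present, with total factor $\prod (1+C\kappa^2)$ per component boundary handled via $n_i$-dependent volume bounds, costing at most one power of $g$. That lost power is exactly why the statement has $Q-1$ rather than $Q$.
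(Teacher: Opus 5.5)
Your skeleton (first moment, Mirzakhani's integration formula, and a product-of-volumes bound costing $g^{-(\mathfrak q-1)}$ for $\mathfrak q$ complementary pieces) is the paper's. However, the step you yourself single out as the main obstacle, summability over the number $k$ of curves uniformly in $g$, is not carried out.

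\begin{itemize}
\item The spanning-tree reduction is invalid. Removing a curve from the multi-curve glues together the two pieces on either side of it. Discarding the curves between pieces not joined in the tree therefore brings the number of complementary components below $Q+1$. The number of parallel curves between two adjacent pieces is unbounded anyway.
\item Your series $\sum_k (C\kappa^2)^k$, with an unquantified $C$, only converges for small $\kappa$.
\item The extension to all $\kappa<1$ is unfounded. A factor $1+C\kappa^2$ per boundary component, with up to $6g-6$ boundary components, is $e^{O(g)}$, not one power of $g$.
\item The product-of-volumes estimate must be uniform in the numbers $n_i$ of boundary components of the pieces, which can be of order $g$. The estimates of \cite{mirzakhani2013} you cite are for a bounded number of boundary components. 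The uniform statement is \eqref{e:bound_N}, from \cite[Lemma 24]{nie2023}.
\end{itemize}

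The exponent $Q-1$ also does not record a lost power of $g$. On the complement of $\cB_g^{\kappa,Q}$ one extracts a $\kappa$-short sub-multi-curve with exactly $Q$ complementary components. This is possible because removing one curve lowers the number of components by at most one. One then applies \eqref{e:bound_N} with $\mathfrak q=Q$.

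The idea that closes the gap in the paper is the factorial obtained by writing the number of unordered multi-curves as $\frac{1}{k!}$ times a sum over ordered $k$-tuples, as in \eqref{eq:NinjQ}. Fix signatures $(g_i,n_i)_{1\le i\le\mathfrak q}$ for the pieces.
\begin{itemize}
\item Every mapping-class-group orbit of ordered $k$-tuples is obtained by successively gluing a boundary curve of the $i_m$-th piece to one of the $i'_m$-th piece, for some sequence $(i_m,i'_m)_{1\le m\le k}$ in $\{1,\dots,\mathfrak q\}^2$. So there are at most $\mathfrak q^{2k}$ orbits.
\item The choices of $(n_1,\dots,n_{\mathfrak q})$ cost at most $(2k)^{\mathfrak q}$.
\item The sum over genera is controlled by \eqref{e:bound_N}.
\end{itemize}
The expected number of $\kappa$-short multi-curves with exactly $Q$ complementary components is thus at most
\begin{equation*}
  \frac{D^{Q}}{g^{Q-1}} \sum_{k \geq 1} \frac{Q^{2k} (2k)^{Q} I_\kappa^{k}}{k!},
\end{equation*}
and the $1/k!$ makes the series converge for every $\kappa$.

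Your per-curve heuristic could alternatively be made to work, but only with exact bookkeeping in place of an unspecified $C$:
\begin{itemize}
\item Since \eqref{e:bound_N} is uniform in the $n_i$, the genus sum costs nothing per curve.
\item Orbits of unordered $k$-component multi-curves with $\mathfrak q$ pieces are determined by the genera and a multigraph with $k$ edges on $\mathfrak q$ labelled vertices. There are polynomially many such multigraphs in $k$.
\item The bound $V_{g,n}(\x)\le e^{\|\x\|_1/2}V_{g,n}$ makes each curve cost at most $\int_0^\kappa x e^{x}\d x = 1-(1-\kappa)e^{\kappa}<1$.
\end{itemize}
This gives a convergent series for every $\kappa<1$.
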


In other words, the events $\cB_{g}^{\kappa,Q}$ are events of extremely small probability: we can
adjust the parameter $Q$ to match any desired rate of decay.

In order to prove this bound, let us introduce a counting function $\cY_{\kappa,Q}$, defined for a
compact hyperbolic surface $X$ as
\begin{align*}
  \cY_{\kappa,Q}(X)
  =\sum_{j=1}^{+\infty}
  \Ninj[,j,Q](X)
\end{align*}
where $\Ninj[,j,Q](X)$ counts unordered families of $\kappa$-short loops with $j$ elements, which
disconnect the surface in at most $Q$ components:
\begin{equation}
  \label{eq:NinjQ}
  \Ninj[,j,Q](X) := \frac{1}{j!}
  \sum_{\substack{(\gamma_1, \ldots, \gamma_j) \in {\MC}_X(Q) }}
  \prod_{i=1}^j \bbbone_{[0, \kappa]}(\ell_X(\gamma_i)).
\end{equation}
More generally, for any $\beta>0$, we define a weighted version
\begin{align*}
  \cY_{\kappa,Q, \beta}(X)
  = \sum_{j=1}^{+\infty} \beta^j \Ninj[,j,Q](X).
\end{align*}
The sum actually stops at $j=3g-3$, but we want to estimate the expectation of $\cY_{\kappa,Q, \beta}$
uniformly as $g$ varies. We prove the following.

\begin{prp} \label{p:YN}
  For any fixed $Q \geq 1$, $0 < \kappa < 1$,  $\beta >0$ and $n \geq 0$,
\begin{align*}
  \sup_{g}\Ewpon \brac*{\cY_{\kappa,Q, \beta}} <+\infty
  \quad \text{and} \quad
  \sup_{g}\Ewpon \brac*{\cY^2_{\kappa,Q, \beta}} <+\infty.
\end{align*}
\end{prp}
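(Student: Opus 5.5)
We compute $\Ewpon[\cY_{\kappa,Q,\beta}]$ with Mirzakhani's integration formula, one multi-curve type at a time. A $j$-component multi-curve $\gamma = (\gamma_1,\dots,\gamma_j)$ in $\MC_X(Q)$ is determined, up to the mapping class group, by a \emph{stable graph}: $q \leq Q$ vertices, one for each component of $X\setminus\gamma$, carrying signatures $(g_v,n_v)$, and $j$ edges. Mirzakhani's formula then gives
\begin{equation*}
  \Ewpon\brac*{\beta^j \Ninj[,j,Q]}
  = \frac{\beta^j}{j!} \sum_{\text{graphs}} \frac{1}{|\mathrm{Aut}|}
  \int_{[0,\kappa]^j} \frac{\prod_{v} V_{g_v,n_v}(\x_v)}{V_{g,n}} \, x_1\cdots x_j \d\x .
\end{equation*}
Using $V_{g_v,n_v}(\x_v) \leq e^{\norm{\x_v}_1/2} V_{g_v,n_v} = \O[\kappa]{1}^{n_v} V_{g_v,n_v}$ on $[0,\kappa]^j$, the integral is bounded by $(C_\kappa\kappa^2)^j$ times a product of constant volumes. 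We therefore reduce to bounding, uniformly in $g$, the combinatorial sum
\begin{equation*}
  S_g := \sum_{j\geq 1} \frac{(C\beta\kappa^2)^j}{j!} \sum_{\text{graphs with } \leq Q \text{ vertices, } j \text{ edges}} \frac{\prod_v V_{g_v,n_v}}{V_{g,n}} .
\end{equation*}

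The key volume input is Mirzakhani's estimate from \cite{mirzakhani2013} (used, for instance, in \cite[Lemma 3.2]{mirzakhani2013} and in the proof of \cite[Proposition 6.4]{Ours1}). For a cut into $q$ pieces with $j$ edges, the sum over genus splittings of $\prod_v V_{g_v,n_v}$ is
\begin{equation*}
  \O[q]{\frac{C^{j} \, V_{g-j+q-1,\,n+2j-2(q-1)}}{g^{q-1}}},
\end{equation*}
with the $g^{-(q-1)}$ loss coming from each extra component. Combined with $V_{g',n'+2}/V_{g'+1,n'} = \O{1}$, this lets us compare against $V_{g,n}$ at a cost $C^j$. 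The factor $1/j!$ absorbs the labelings of edges, apart from the choice of which vertices each edge joins, and with $q\leq Q$ that choice costs only $Q^{2j}$. Hence the $j$-th term is $\O[Q]{(C'\beta\kappa^2)^j/j!}$ and the series converges uniformly in $g$.

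I expect the main obstacle to be uniformity in $j$ of the volume-ratio bound when $j$ is comparable to $g$. For $j$ edges among few vertices, the genus drops by about $j$ while the number of boundaries grows by $2j$, and the estimates $V_{g-1,n+2}\asymp V_{g,n}$ have to be iterated $j$ times with a uniform constant. I would use Mirzakhani's uniform bounds, valid for all $n$ and $g$, such as $V_{g-1,n+2} \leq V_{g,n}$ and $\sum_{g_1+g_2=g}V_{g_1,n_1}V_{g_2,n_2} \ll V_{g,n}/g$. Where needed, I would restrict to $n_v = \O{j}$ and use the factorial to beat any $C^j$ growth. The hypothesis $\kappa<1$ only enters through constants.

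For the second moment, write $\cY^2$ as a sum over pairs of multi-curves $(\gamma,\gamma')$, each in $\MC_X(Q)$, with $\ell^{\max}\leq\kappa$. Since all $\kappa$-short curves are simple and pairwise disjoint for $\kappa$ small (Collar Lemma, as in \cref{lem:inc_excl_inj}), the union $\gamma\cup\gamma'$ is again a multi-curve, once shared components are identified. For larger $\kappa<1$ one can first discard crossing pairs using the Collar Lemma bound on the number of short curves meeting a given one, or note that intersecting short curves force $\ell\geq$ a universal constant. The union separates $X$ into at most $Q^2$ components, because each component of $X\setminus(\gamma\cup\gamma')$ is determined by a pair of components of the two complements. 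Counting the ways to split the union into $\gamma$ and $\gamma'$ costs at most $3^{|\gamma\cup\gamma'|}$, which gives $\cY^2_{\kappa,Q,\beta} \leq \cY_{\kappa,Q^2,3\beta^2+3\beta}$ up to harmless factors. The first-moment bound with $Q^2$ in place of $Q$ then concludes.
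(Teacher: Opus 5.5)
Your first-moment argument follows the paper's route. Both use Mirzakhani's integration formula, the bound $V_{g,n}(\x)\le e^{\|\x\|_1/2}V_{g,n}$, at most $Q^{2j}$ gluing patterns, a polynomial-in-$j$ count of the boundary vectors $(n_v)$, and a splitting estimate uniform in the $n_v$. The uniform estimate you are looking for is \cite[Lemma 24]{nie2023}: $\sum_{g_1,\dots,g_q}\prod_i V_{g_i,n_i}\le C\,(D/(2g-2+n))^{q-1}V_{g,n}$ with universal $C,D$. The paper uses it to compare directly with $V_{g,n}$, so there is no need to iterate $V_{g-1,n+2}\le V_{g,n}$. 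This is where uniformity in $n_v\le 2j+n$ comes from; splitting bounds stated for fixed $n$ would not suffice.

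The second-moment step has a genuine gap. The claim that $X\setminus(\gamma\cup\gamma')$ has at most $Q^2$ components is false, because a component of $X\setminus\gamma$ and a component of $X\setminus\gamma'$ can meet in several components of the union's complement. Two disjoint homologous non-separating curves $a,a'$ already show this: $\{a\},\{a'\}\in\MC_X(1)$, while $\{a,a'\}$ cuts $X$ in two. Worse, glue $S_{1,2},S_{0,4},\dots,S_{0,4},S_{1,2}$ in a row, with the $i$-th piece attached to the $(i+1)$-st along two $\kappa$-short curves $a_i,a_i'$. Then $\gamma=\{a_i\}$ and $\gamma'=\{a_i'\}$ both lie in $\MC_X(1)$, but $\gamma\cup\gamma'$ cuts $X$ into $m=g-1$ pieces. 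So the union lies in no $\MC_X(Q')$ with $Q'$ depending only on $Q$, and $\cY^2_{\kappa,Q,\beta}\le\cY_{\kappa,Q^2,3\beta^2+3\beta}$ does not follow. What is true is weaker: the dual graph of $\gamma\cup\gamma'$ contains two edge-disjoint spanning forests with at most $Q$ trees each, coming from $\gamma'\setminus\gamma$ and $\gamma\setminus\gamma'$. Hence it has at most $|\gamma\cup\gamma'|/2+Q$ vertices. A proof must then play the decay $(D/g)^{q-1}$ against the much larger number of such configurations with $q$ pieces, uniformly in $j$ up to $3g-3$. The $Q^{2j}$ count does not cover this, and your proposal does not address it. You also cannot fall back on the paper's one-line reduction $\cY^2_{\kappa,Q,\beta}\le\cY_{\kappa,Q,4\beta}$, which fails pointwise for the same reason. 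Take $Q=2$ and $N$ disjoint $\kappa$-short curves, each cutting off a one-holed torus. No two of them together lie in $\MC_X(2)$, so $\cY_{\kappa,2,\beta}=N\beta$, giving a left side of $N^2\beta^2$ against a right side of $4N\beta$.
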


\begin{proof}
  We use the upper bound \cite[equation (2.9)]{Ours1} and standard integration methods on the moduli
  space, which gives
 \begin{align}
   \label{e:L1}
   \Ewpon \brac*{\cY_{\kappa,Q, \beta}} \leq \frac{1}{V_{g, n}}
   \sum_{j=1}^{+\infty} \frac{\beta^j  I_\kappa^{j}}{j!}
   \sum_{\mathfrak{q}=1}^Q
   \sum_{\vec{g},\vec{n}}
   \# \mathrm{Orb}_{g, n}^{j,Q}(\vec{g},\vec{n}) \prod_{i=1}^{\mathfrak{q}} V_{g_i, n_i}
 \end{align}
where:
\begin{itemize}
\item the sum over $1 \leq \mathfrak{q} \leq Q$ is a sum over the number of connected components of
  the complement of $\gamma$;
\item the sum over $\vec{g}, \vec{n}$ is a sum over the vectors
  $(g_i, n_i)_{1 \leq i \leq \mathfrak{q}}$ satisfying $2 g_i-2+n_i >0$, 
  \begin{equation*}
    \sum_{i=1}^{\mathfrak{q}} n_i =2j
    \quad \text{and} \quad
    \sum_{k=1}^{\mathfrak{q}} (2g_i-2+n_i)=2g-2+n;
  \end{equation*}
\item the quantity $I_\kappa$ is defined as $I_\kappa= \int_{0}^\kappa \ell  e^{\frac{\ell}2}d\ell$;
\item $\mathrm{Orb}_{g, n}^{j,Q}(\vec{g},\vec{n})$ is the set of MCG-equivalence classes, in a
  surface $S_{g, n}$ of signature $(g, n)$, of multi-curves $\gamma=(\gamma_1, \ldots, \gamma_j)$ with $j$
  components, cutting the surface $S_{g, n}$ into $\mathfrak{q}$ numbered pieces of respective signatures
  $(g_1, n_1), \ldots, (g_{\mathfrak{q}}, n_{\mathfrak{q}})$.
\end{itemize}

We then prove that for each $\vec{g}, \vec{n}$,
$$ \# \mathrm{Orb}_{g, n}^{j,Q}(\vec{g},\vec{n}) \leq \mathfrak{q}^{2j}.$$
To see this, we define a surjective map from a subset
$\mathcal{A} \subset \{1, \ldots, \mathfrak{q}\}^{2j}$ onto
$\mathrm{Orb}_{g, n}^{j,Q}(\vec{g},\vec{n})$. Fix a family of surfaces
$(S_i)_{1 \leq i \leq \mathfrak{q}}$ of respective signatures
$(g_i, n_i)_{1 \leq i \leq \mathfrak{q}}$. Given a sequence $(i_k, i_k')_{1 \leq k \leq j}$ with
$(i_k, i_k')\in \{1, \ldots, \mathfrak{q}\}^2$, glue successively, for $k = 1, \ldots, j$, a
boundary curve of $S_{i_k}$ to a boundary curve of $S_{i_k'}$, and call this curve $\gamma_k$.  The
set $\cA$ is the subset of $\{1, \ldots, \mathfrak{q}\}^{2j}$ such that this succession of gluings
is actually possible and gives a connected surface of signature $(g, n)$. In this case, we obtain a
surface of signature $(g, n)$ together with a muticurve $(\gamma_1, \ldots, \gamma_j)$, that can be mapped to
our reference surface $S_{g, n}$ by a homeomorphism. Any multi-curve that cuts $S_{g, n}$ into
$\mathfrak{q}$ pieces of respective signatures
$(g_1, n_1), \ldots, (g_\mathfrak{q}, n_\mathfrak{q})$ is MCG-equivalent to a multi-curve obtained
this way.

Now, for fixed $(n_1, \ldots, n_{\mathfrak{q}})$, we know from \cite[Lemma 24]{nie2023} that
\begin{align}\label{e:bound_N}
  \sum_{\substack{g_1, \ldots, g_{\mathfrak{q}},
  \\ \sum_{i=1}^{\mathfrak{q}} (2g_i-2+n_i)=2g-2+n}}
  \prod_{i=1}^{\mathfrak{q}}V_{g_i, n_i}
  \leq C \Big(\frac{D}{2g-2+n}\Big)^{{\mathfrak{q}}-1} V_{g, n}
\end{align}
for universal constants $C, D$.
In particular, this is bounded above by $C V_{g, n}$.
Taking into account all the possibilities for $(n_1, \ldots, n_{\mathfrak{q}})$, we lose another
factor $(2j)^{Q}$. We end up with the upper bound
\begin{align}
  \label{e:bound_EY}
\Ewpon \brac*{\cY_{\kappa,Q, \beta}} \leq C \sum_{j=1}^{+\infty} \frac{\beta^j}{j!} Q^{2j+1}(2 j)^{Q}  I_\kappa^{j}  
\end{align}
which is a convergent series.

The bound on the second moment comes from the inequality
$\cY^2_{\kappa,Q, \beta}\leq \cY_{\kappa,Q, 4\beta}$, together with the previous case.
\end{proof}
 
We now conclude to the proof of Proposition \ref{l:PBN}, by the same method.

\begin{proof}[Proof of Proposition \ref{l:PBN}]
  For a hyperbolic surface $X$, define ${\MC}^{(1)}_X(Q)\subset {\MC}_X(Q)$ to be the set of
  multi-curves which separate $X$ into {\emph{exactly}} $Q$ components, and $\cY^{(1)}_{\kappa,Q}$
  the random variable
 \begin{align*}
   \cY^{(1)}_{\kappa,Q}(X)
   &=\sum_{j=1}^{+\infty} \frac1{j!}
     \sum_{\substack{(\gamma_1, \ldots, \gamma_j)\in {\MC}^{(1)}_X(Q)}}
    \prod_{i=1}^j\bbbone_{[0, \kappa]}(\ell_X(\gamma_i))
\end{align*}
which counts the total number of such multi-curves with maximal length $\leq \kappa$.

Using the bound~\eqref{e:bound_N} with $\mathfrak{q}=Q$, we obtain
\begin{align*}  
  \Ewpo \brac*{\cY^{(1)}_{\kappa,Q}}
  \leq \frac{D^Q}{g^{Q-1}} \sum_{j=1}^{+\infty} \frac{Q^{2j}(2 j)^{Q}  I_\kappa^{j}}{j!}
  = \O[\kappa, Q]{\frac{1}{g^{Q-1}}}.
\end{align*}
This yields the announced results, because $\cY^{(1)}_{\kappa,Q}\geq 1$ on the complement of $\cB_g^{ \kappa, Q}$.
 
 \end{proof}

 We furthermore add another useful tail estimate on the sum in the definition of $\cY_{\kappa,Q}$.

 \begin{lem}
   \label{lem:tail_j}
   For any $Q \geq 1$, $0 < \kappa < 1$ and $\ord \geq 0$, any large enough $g$,
   \begin{equation*}
     \Ewp{\sum_{j >  \log g}^{+\infty} \Ninj[,j,Q]}
     = \O[\kappa,Q,\ord]{\frac{1}{g^\ord}}.
   \end{equation*}
 \end{lem}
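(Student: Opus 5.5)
We reuse the bound \eqref{e:bound_EY} from the proof of \cref{p:YN}, which controls each $j$-term separately, and estimate its tail. First we check that restricting the outer sum in \eqref{e:L1} to $j > \log g$ changes nothing in the argument. The integration over lengths still gives the factor $I_\kappa^j/j!$. The orbit count is still at most $\mathfrak{q}^{2j} \leq Q^{2j}$. The sum over $(n_1,\dots,n_{\mathfrak{q}})$ still costs at most $(2j)^Q$. Finally, \eqref{e:bound_N} (from \cite[Lemma 24]{nie2023}) bounds the ratio of volume products to $V_g$ by a universal constant $C$. We therefore obtain, uniformly in $g$,
\begin{equation*}
  \Ewp{\sum_{j > \log g} \Ninj[,j,Q]}
  \leq C \sum_{j > \log g} \frac{Q^{2j+1} (2j)^{Q} I_\kappa^{j}}{j!}.
\end{equation*}

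It then remains to show that the tail of this series is $\mathcal{O}(g^{-\ord})$. Set $a := Q^2 I_\kappa$, a constant depending only on $\kappa$ and $Q$. Since $(2j)^Q \leq 2^{Q} Q!\, 2^{j}\cdot$ const, or more simply $(2j)^Q \leq C_Q\, 2^j$, the general term is at most $C_Q Q\, (2a)^j/j!$. By Stirling, $j! \geq (j/e)^j$, so for $j > \log g$ and $g$ large enough that $\log g \geq 2e^{\ord+1}\cdot 2a$, we get
\begin{equation*}
  \frac{(2a)^j}{j!} \leq \Big(\frac{2ae}{j}\Big)^j \leq \big(e^{-\ord-1}\big)^j\, 2^{-j}.
\end{equation*}
This gives exponential decay $e^{-(\ord+1) j}2^{-j}$ for every $j > \log g$. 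Summing the resulting geometric series yields at most $2e^{-(\ord+1)\log g} = 2g^{-\ord-1}$, which is even better than claimed. The implied constant depends only on $\kappa$, $Q$ and $\ord$, as allowed.

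The only delicate point is that \eqref{e:bound_EY} genuinely holds with constants independent of $g$ and $j$, term by term in $j$. This is where the restriction to $\MC_X(Q)$ is essential: it keeps the number of components, and hence the number of gluing patterns, bounded by $Q^{2j}$ rather than something growing with $j$. The uniformity of \eqref{e:bound_N} in $g$ is also needed. Once this is granted, the superexponential decay of $1/j!$ does all the work, since $j$ exceeds $\log g$.
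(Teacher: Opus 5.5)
Your proposal is correct and is essentially the paper's proof. The paper likewise reruns the argument behind \eqref{e:bound_EY} with the sum restricted to $j > \log g$, and observes that the tail $\sum_{j>\log g}(2j)^Q(Q^2 I_\kappa)^j/j!$ decays faster than any power of $g$; you make this decay explicit via $j! \geq (j/e)^j$.

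There is one minor slip. To get $2ae/j \leq e^{-N-1}/2$ you need $\log g \geq 4a e^{N+2}$, not $4ae^{N+1}$. With your threshold you still obtain $(e^{-N}/2)^j$, hence an $\mathcal{O}(g^{-N})$ bound, so nothing breaks.
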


 \begin{proof}
   We simply use the same proof as \eqref{e:bound_EY}, now joined with the observation that the tail
   $\sum_{j > \log g}^{+\infty} (2j)^Q (Q^2I_\kappa)^j/j!$ goes to zero
   faster than any power of $g$ as $g \rightarrow + \infty$.
 \end{proof}


\section{Dependency of constants in the number of components}

In the inclusion-exclusion performed in this article, we need to understand the dependency of
certain constants more finely in order to cope with the fact that the local types we consider have
$j \gg 1$ copies of the local type simple. We explicit these bounds here.

\subsection{Constants in \cite{anantharaman2022}  }
 
Let us provide a more explicit version of the main result of~\cite{anantharaman2022}, Theorem 1.1,
where we provide asymptotic expansions for Weil--Petersson volume polynomials.

\begin{thm}
  \label{theo:volume_asympt_exp}
  For any integers $g \geq 0$, $n \geq 1$ such that $2g-2+n>0$, there exists a family of
  $n$-variable even polynomial functions $(P_{g,n}^{(\ord,V_\pm)})_{\ord, V_\pm}$, with $\ord \geq 0$ and
  $V_+ \sqcup V_- \subseteq \{1, \ldots, n\}$, such that for any integer $N \geq 0$ and any length
  vector $\x \in \R_{\geq 0}^n$,
  \begin{equation}\label{e:OKn}
     \Big|  \frac{x_1\ldots x_nV_{g,n}(\x)}{V_{g,n}}
    - F_{g,n}^{(\ord)}(\x) \Big|
    = \O[\ord,n]{\frac{(\|\x\|+1)^{3\ord+1}}{(g+1)^{\ord+1}} \exp \div{x_1+\ldots
        +x_n}}
  \end{equation}
  where
  \begin{equation*}
    F_{g,n}^{(\ord)}(\x)
    := \sum_{V_+ \sqcup V_- \subseteq \{1, \ldots,n \}}
    P_{g,n}^{(\ord,V_\pm)}(\x) \prod_{i \in V_+} \cosh
    \div{x_i}
    \prod_{i \in V_-} \sinh \div{x_i}.
  \end{equation*}
  Furthermore, we have the following.
  \begin{enumerate}
  \item The leading-order term is given explicitly by
    \begin{equation}
      F_{g,n}^{(0)}(\x)
      = 2^n \prod_{i=1}^n \sinh \div{x_i}
    \end{equation}
    i.e. the only non-zero polynomial is the one corresponding to $V_- = \{1, \ldots, n\}$ and
    $V_+=\emptyset$, and is equal to the constant polynomial $2^n$.
  \item The polynomial $P_{g,n}^{(\ord,V_\pm)}$ is even in the variables $(x_i)_{i\in V_-}$ and odd
    in  $(x_i)_{i\notin V_-}$.
\item The total degree of $P_{g,n}^{(\ord,V_\pm)}$ in the variables $(x_i)_{i\in V_+\sqcup V_-}$ is
  $\leq 2\ord$. The partial degree with respect to each $x_i$ with $i\in V_0$ can be bounded by
  a quantity $a_{\ord+1}$.
\item The coefficients of $P_{g,n}^{(\ord,V_\pm)}$ can be written as linear combinations
  (independent of $g$) of the $c_{g,n}(\alpha)/V_{g,n}$ for multi-indices $\alpha$ such that
  $\sup_{1 \leq i \leq n} \alpha_i \leq 2\ord+a_{\ord+1}$.
\item There exists $\tilde{a}_\ord$ such that the constant in \eqref{e:OKn} and the
  coefficients of the polynomials $P_{g,n}^{(\ord,V_\pm)}$ are bounded by $\tilde a_\ord^n$,
  uniformly in $g$.
  \end{enumerate}
\end{thm}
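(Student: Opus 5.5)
The plan is to re-run the proof of \cite[Theorem 1.1]{anantharaman2022} while tracking every constant's dependence on $n$. At each step we check that it grows at most geometrically in $n$, i.e.\ is bounded by a quantity of the form $\tilde a_\ord^n$.

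\textbf{Step 1: expand the volume polynomial.} Start from Mirzakhani's expansion of the volume polynomial in intersection numbers,
\begin{equation*}
  V_{g,n}(\x) = \sum_{|\alpha| \leq 3g-3+n} \frac{c_{g,n}(\alpha)}{V_{g,n}}\, V_{g,n} \prod_{i=1}^n \frac{x_i^{2\alpha_i}}{2^{2\alpha_i}(2\alpha_i+1)!},
\end{equation*}
with $c_{g,n}(0)=V_{g,n}$. The core input is an expansion of the normalized intersection numbers
\begin{equation*}
  \frac{c_{g,n}(\alpha)}{V_{g,n}} = \sum_{k=0}^{\ord} \frac{e_k^{(n)}(\alpha)}{g^k} + \O[\ord]{\frac{\tilde a_\ord^n (|\alpha|+1)^{c\ord}}{g^{\ord+1}}},
\end{equation*}
where $e_0 \equiv 1$ and, for $k \geq 1$, $e_k^{(n)}(\alpha)$ is a polynomial in the $\alpha_i$. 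The polynomial $e_k^{(n)}$ should have total degree $\leq 2k$ in a bounded number of "active" variables and partial degree $\leq a_{k+1}$ in the others. Its coefficients should be $g$-independent linear combinations of the $c_{g,n}(\alpha')/V_{g,n}$ with $\sup_i \alpha'_i \leq 2\ord + a_{\ord+1}$; this is where item (4) comes from.

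\textbf{Step 2: prove the Step 1 expansion with $n$-uniform constants.} We prove this by induction on $\ord$, feeding Mirzakhani's recursion for the $c_{g,n}(\alpha)$ into the known ratio estimates (Mirzakhani--Zograf type bounds such as $1 - c_{g,n}(\alpha)/V_{g,n} = \O{|\alpha|^2/g}$, and $V_{g,n+1}/(2g-2+n)V_{g,n}$, $V_{g-1,n+2}/V_{g,n}$ having expansions in $1/g$). The recursion acts on one marked point at a time, so each step only involves the index of the point being removed. The constants therefore multiply in a product form across the $n$ points, giving a factor $\tilde a_\ord^n$. The non-separating and separating terms are summed using \eqref{e:bound_N}-type estimates, as in \cite{nie2023}, which again lose at most a factor geometric in $n$.

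\textbf{Step 3: resum in $\alpha$.} Plug the Step 1 expansion into the volume polynomial and resum over $\alpha$. For $k=0$, the identity $\sum_{m} (x/2)^{2m}/(2m+1)! = \sinh(x/2)/(x/2)$ gives item (1), $F^{(0)}_{g,n} = 2^n \prod_i \sinh(x_i/2)$. For higher $k$, a monomial $\prod \alpha_i^{p_i}$ multiplied by $x_i^{2\alpha_i}/(2^{2\alpha_i}(2\alpha_i+1)!)$ resums, via the operator $\frac{x}{2}\partial_x$, to a polynomial in $x_i$ times $\cosh(x_i/2)$ or $\sinh(x_i/2)$ (or a bare polynomial when $p_i=0$). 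This produces the sets $V_\pm$, the parity statement (2), and the degree bounds (3). The coefficients stay bounded by $\tilde a_\ord^n$ because each variable contributes a bounded factor.

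\textbf{Step 4: control the errors.} The truncation error is handled by bounding $\sum_\alpha (|\alpha|+1)^{c\ord}\prod_i x_i^{2\alpha_i}/(2^{2\alpha_i}(2\alpha_i+1)!)$ by $(\|\x\|+1)^{3\ord+1}\, e^{(x_1+\dots+x_n)/2}$ times $C^n$. The terms with $|\alpha| > 3g-3+n$, which are absent from $V_{g,n}$ but present in the resummed functions, are bounded in the same way, since $(\|\x\|/2)^{2|\alpha|}/(2|\alpha|)!$ summed over large $|\alpha|$ is absorbed into the error by the factor $g^{-\ord-1}$ and the exponential.

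\textbf{Expected main obstacle.} The hardest part is Step 2: making the $1/g$-expansion of $c_{g,n}(\alpha)/V_{g,n}$ uniform in $n$ with only geometric losses. In particular, the combinatorial sums over splittings in the recursion must not produce factorial-in-$n$ constants. I would try first to organize the induction so that only one index is modified at a time, and bound the separating contributions directly by the inequality~\eqref{e:bound_N}.
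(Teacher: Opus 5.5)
Your overall plan (rerun the proof of \cite[Theorem 1.1]{anantharaman2022} and check that no constant grows faster than geometrically in $n$, using \cite[Lemma 24]{nie2023} for the separating sums) is the paper's plan. However, the architecture you attribute to that proof is not the one it has, and the mismatch breaks Steps 1 and 3.

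That argument does not expand $f(\alpha)=c_{g,n}(\alpha)/V_{g,n}$ as a single polynomial in $\alpha$ on all of $\mathbb{N}^n$. Instead it bounds finite differences $\delta^{\mathbf m}f$ (\cite[Theorem 4.2]{anantharaman2022}, with a constant $C_{n,N}$). It then applies a discrete Taylor-type lemma, \cite[Lemma 5.3]{anantharaman2022}, to $f$ with $|\mathbf m|=2N+1$, $p=N+1$ and threshold $a=a_{N+1}$. The coefficients of $P_{g,n}^{(N,V_\pm)}$ are $g$-independent combinations of the values $\delta^{\mathbf m}f(\alpha)$ with $|\mathbf m|\le 2N$ and entries of $\alpha$ at most $a_{N+1}$; this is where item (4) comes from. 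The $V_0$ factors come from the finitely many small values of $\alpha_i$ below the threshold, which are kept as they are.

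Your Step 3 misidentifies this. For $p_i=0$ the sum over $\alpha_i$ produces $2\sinh(x_i/2)$, not a bare polynomial. In general $\sum_{\alpha_i}h(\alpha_i)\,x_i^{2\alpha_i+1}/(2^{2\alpha_i}(2\alpha_i+1)!)$ is a polynomial only when $h$ is finitely supported. Conversely, your Step 1 ansatz allows partial degree up to $a_{k+1}$ in the ``inactive'' $\alpha$-variables. After resummation this puts all those variables into $V_\pm$ and violates the bound $2N$ on the total degree in the $V_+\sqcup V_-$ variables in item (3). So Step 1 is stronger than what is available, since the threshold and the $V_0$ terms appear in the statement precisely because polynomial behaviour in $\alpha_i$ is only controlled above $a_{N+1}$. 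It is also of the wrong shape to yield items (3)--(4).

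The main gap is Step 2. It is the whole content of the theorem beyond \cite{anantharaman2022}, and you leave it at the level of a heuristic. The claim that constants ``multiply in product form across the $n$ points'' does not describe Mirzakhani's recursion:
\begin{itemize}
\item it couples $x_1$ to every other boundary through the merging terms, and to all $2^{n-1}$ splittings of the remaining boundaries;
\item the non-separating term raises $n$;
\item a factor lost at each level of the recursion would compound over the $\sim g$ levels and destroy uniformity in $g$.
\end{itemize}

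What must actually be checked is the following.
\begin{itemize}
\item The constants in \cite[eq.~(20)]{anantharaman2022} and \cite[Lemma 2.4]{anantharaman2022} are uniform in $n$ by \cite[Lemma 24]{nie2023}.
\item The constant in \cite[Lemma 2.8]{anantharaman2022} is at most $2^n$, because the sum in \cite[eq.~(24)]{anantharaman2022} has $2^n$ terms. This gives $C_{n,N}\le 2^{2nN}$.
\item The constant of \cite[Lemma 5.3]{anantharaman2022} is at most $2^{N+1+n}a_{N+1}^n(4n)^{N+1}n^{2N+1}$, which settles the error term.
\end{itemize}
For the coefficients you also need an input you never state, namely $0\le c_{g,n}(\alpha)\le c_{g,n}(0)=V_{g,n}$ \cite[Lemma 3.1]{mirzakhani2013}. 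This gives $|\delta^{\mathbf m}f|\le 2^{2N}$. Together with at most $(a_{N+1}+1)^n(N+1)^n$ terms and combination coefficients bounded by $n^{N+1}$, it yields the bound $\tilde a_N^n$.

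Your Step 4 treatment of the tail $|\alpha|>3g-3+n$ is fine.
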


In applications to our paper, the number of boundary components $n$ will be set to be
$n= n_{\Sf}+2j$, with $n_{\Sf}$ fixed and $j$ arbitrary. This is due to ``adding'' $j$ copies of the
types simple to a local type, as needed for the inclusion-exclusion (see \cref{sec:inclusion-exclusion}).

\begin{proof}
  The expression, and the first three points, are explicitly addressed in \cite{anantharaman2022}
  (we multiplied the result of \cite[Theorem 1.1]{anantharaman2022} by $x_1\ldots x_n$ to make it
  directly usable for our purposes).
  However, the last point requires to follow the proof of \cite[Theorem 1.1]{anantharaman2022} to
  check how all the constants $C_n$, present in \cite{anantharaman2022}, depend on the number $n$ of
  boundary components (both for the constant in \eqref{e:OKn} and the coefficients of the
  polynomials).

  Thanks to \cite[Lemma 24]{nie2023}, the constants $C_n$ in \cite[eq (20)]{anantharaman2022} and
  \cite[Lemma 2.4]{anantharaman2022} may be seen to be uniform in $n$.  In \cite[Lemma
  2.8]{anantharaman2022}, the constant $C_n$ may be of order at most $2^n$, because the number of
  terms in the sum \cite[eq (24)]{anantharaman2022} is $2^n$.  This leads to the conclusion that the
  constant $C_{n, \ord}$ in \cite[Theorem 4.2]{anantharaman2022} can be bounded by $2^{2n\ord}$.

  After establishing \cite[Theorem 4.2]{anantharaman2022}, the proof of Theorem
  \ref{theo:volume_asympt_exp} is done in \S 5.4 in \cite{anantharaman2022}, and consists in
  applying \cite[Lemma 5.3]{anantharaman2022} to the function
  $f(\alpha):=c_{g,n}(\alpha) / V_{g,n}$.  It is known that
  $$0\leq c_{g,n}(\alpha)\leq c_{g,n}(0) =V_{g,n},$$
  see e.g. \cite[Lemma 3.1]{mirzakhani2013}, so we mainly need to check how \cite[Lemma
  5.3]{anantharaman2022} depends on $n$.
 
  When applying \cite[Lemma 5.3]{anantharaman2022} to bound the error term, we must take
  $|m|=2\ord+1$, $p=\ord+1$, $a=a_{\ord+1}$, so there arises a constant $C_{n, a, p, 2\ord}$ bounded
  by the quantity $2^{\ord+1+n}a_{\ord+1}^{n} (4n)^{\ord+1 }n^{2\ord+1}$. Hence, the error term in
  \cite[Lemma 5.3]{anantharaman2022} is less than
 $$2^{2n\ord}2^{\ord+1+n}a_{\ord+1}^{n} (4n)^{\ord+1 }n^{2\ord+1},$$ giving the desired control for
 the constant in \eqref{e:OKn}.
 
 We now turn to the control on the coefficients of the polynomial $P_{g,n}^{(\ord,V_\pm)}$.  In
 \cite[Lemma 5.3]{anantharaman2022}, the ``linear combinations of the values
 $\delta^{\mathbf{m}}f(\alpha)$'' may be checked to have coefficients bounded by $n^{\ord+1}$.
 Given that $|\mathbf{m}|\leq 2\ord$ and $|\alpha|\leq a_{\ord+1}$, the number of possible terms is
 at most $(a_{\ord+1}+1)^{n} (\ord+1)^n$.  Finally
 $|\delta^{\mathbf{m}}f(\alpha)| \leq  2^{2\ord} \sup_{\alpha}|f(\alpha)|=
 2^{2\ord} f(0)$, where $f(0)=1$.  The outcome is that each coefficient of $P_{g,n}^{(\ord,V_\pm)}$
 is bounded by
 $$n^{\ord+1} 2^{2\ord}  (a_{\ord+1}+1)^{n} (\ord+1)^n$$
 which implies our claim.
\end{proof}

\subsection{Rank of a realization}

We now discuss a technical estimate which will be useful when computing averages in which the
counting functions $\Ninj[,j,Q]$ appear.  We recall that for a realisation $\mathfrak{R}$ of a
filling type into a surface of genus $g$, the notion of ``rank'' defined in
\cite[Section~5.4.1]{Ours1} to describe the rank of a realization is the height in the asymptotic
expansion in powers of $1/g$ at which it appears.  This allows us to truncate sums over all
realizations to those of rank $< \ord$, as done in \cite[Lemma 5.21]{Ours1}. We here adapt this result
to the set $R_{g,Q}(\rho_j\Sf)$ of realisations of~$\Sf$ and $j$ cylinders in a surface of genus $g$
such that the $j$ cylinders separate it in at most $Q$ connected components.
\begin{lem}
  \label{lem:limit_rank_Q}
  There exists a universal constant $D>0$ such that, for any filling type~$\Sf$, any integers
  $\ord \geq \chi(\Sf)$, $Q\geq 0$, any large enough $g$, any $0 \leq j \leq \log(g)$,
  \begin{equation}\label{e:upper-rank}
    \frac{V_{g_\Sf,n_\Sf}}{V_g}
    \sum_{\substack{\mathfrak{R} \in R_{g, Q}(\rho_j \Sf) \\ \mathfrak{r}(\mathfrak{R})\geq
        \ord}}
    \prod_{\substack{1 \leq i \leq \mathfrak{q} \\\chi_i>0}} V_{g_i, n_i}
    \leq 2^{(n_\Sf+2j)(Q+n_\Sf)}  \frac{D^{Q+n_\Sf} }{g^\ord}.
  \end{equation}
\end{lem}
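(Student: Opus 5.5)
We will follow the proof of \cite[Lemma 5.21]{Ours1}, but keep track of how every combinatorial factor and constant depends on $j$, $Q$ and $n_\Sf$. First we parametrise $R_{g,Q}(\rho_j\Sf)$. A realization is given by gluing data. Once $\Sf$ and the $j$ cylinders are removed, the complement splits into components $(g_i,n_i)_{1\le i\le \mathfrak{q}}$, and each of the $n_\Sf+2j$ boundary curves of $\Sf\sqcup(\text{cylinders})$ is assigned either to one of these components or to another such boundary curve (gluing two boundaries directly). Since the $j$ cylinders separate the surface into at most $Q$ pieces, and $\Sf$ has at most $n_\Sf$ boundary components, we get $\mathfrak{q}\le Q+n_\Sf$. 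Hence the number of possible combinatorial gluing patterns for fixed signatures is at most $(Q+n_\Sf+1)^{n_\Sf+2j}$. We will bound this crudely by $2^{(n_\Sf+2j)(Q+n_\Sf)}$, using $m+1\le 2^m$. This is the origin of the exponential factor in \eqref{e:upper-rank}.

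Next, for a fixed pattern, with fixed $\mathfrak{q}$ and fixed $(n_1,\dots,n_\mathfrak{q})$, we sum over the genera $g_i$ subject to $\sum_i(2g_i-2+n_i)=2g-2-2\chi(\Sf)+(\text{corrections from direct gluings})$. We use \eqref{e:bound_N}, from \cite[Lemma 24]{nie2023}, whose constants are uniform in the number of boundary components. This gives
\begin{equation*}
  \sum_{g_i}\prod_{\chi_i>0} V_{g_i,n_i}\le C\Big(\frac{D}{g}\Big)^{\mathfrak{q}-1} V_{g',n'},
\end{equation*}
where $(g',n')$ is the signature of the connected surface obtained by merging all components. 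We then compare $V_{g',n'}/V_g$ with $g^{-\chi(\Sf)}$ times a power of $1/g$ coming from loss of genus. We use the standard ratios $V_{g-1,n+2}/V_{g,n}=1+O(1/g)$ and $V_{g,n+1}/V_{g,n}\asymp g$ from \cite[Lemma 3.2]{mirzakhani2013}. The crucial point is that these ratios are uniform in $n\le n_\Sf+2j\le n_\Sf+2\log g$ up to a factor $D$ per step. By the definition of rank in \cite[Section 5.4.1]{Ours1}, the total power of $1/g$ obtained equals the rank $\mathfrak{r}(\mathfrak{R})$, which is at least $\ord$. The extra prefactor $V_{g_\Sf,n_\Sf}/V_g$ together with the $\mathfrak{q}-1$ merges then produces the bound $D^{Q+n_\Sf}g^{-\mathfrak{r}}$.

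Finally we sum over ranks $\mathfrak{r}\ge \ord$ and over the $(n_i)$. The sum over the $(n_i)$ is absorbed by the counting of patterns, since the patterns determine them. The sum over ranks is geometric in $1/g$ and costs only a constant, absorbed into $D$. This yields \eqref{e:upper-rank}.

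The main obstacle is the uniformity in $n=n_\Sf+2j$ of the volume-ratio estimates used to convert loss of genus into powers of $1/g$. Mirzakhani's estimates hold for fixed $n$. We will need the versions with $n=O(\log g)$, where constants must grow at most like $D^{\text{(number of merges)}}$ and not like $D^{n}$. We plan to get this again from \cite[Lemma 24]{nie2023} and the uniform bounds $c_{g,n}(\alpha)\le V_{g,n}$. We must also make sure that the cylinders glued pairwise do not produce unbounded rank-zero contributions beyond the $2^{(n_\Sf+2j)(Q+n_\Sf)}$ factor.
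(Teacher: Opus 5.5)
Your bookkeeping agrees with the paper's. The factor $2^{(n_{\Sf}+2j)(Q+n_{\Sf})}$ accounts for distributing the $n_{\Sf}+2j$ boundary curves among at most $Q+n_{\Sf}$ complementary pieces, and \cite[Lemma 24]{nie2023} replaces the merging constant $\mathcal{O}_{\Sf}(1)$ by $D^{Q+n_{\Sf}}$. The problem is how you extract the factor $g^{-N}$.

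In your second step you sum over \emph{all} genera $(g_i)$ using \eqref{e:bound_N} and merge everything into a single volume $V_{g',n'}$ with $2g'-2+n'=2g-2-\chi(\Sf)$. The best this can give is a factor $g^{-(\chi(\Sf)+\mathfrak{q}-1)}$, and the information on the rank is lost. So the assertion that the power of $1/g$ obtained ``equals the rank $\mathfrak{r}(\mathfrak{R})$'' does not follow. For instance, take $\Sf$ a cylinder and $j=0$. The separating realizations cutting off a piece of signature $(h,1)$ all have $\mathfrak{q}=2$, while their terms, proportional to $V_{h,1}V_{g-h,1}/V_g$, are of order $g^{-(2h-1)}$ for fixed $h$. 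Restricting to rank $\geq N$ should give $\mathcal{O}(g^{-N})$, but the merged bound only gives $\mathcal{O}(1/g)$, whatever $N$ is. The argument of \cite[Lemma 5.21]{Ours1}, which the paper follows, works differently. It keeps one complementary component apart (the one indexed by $j^+$ in \cite[equation (5.10)]{Ours1}) and obtains the rank-dependent power of $g$ from the volume bound $\mathcal{O}_{N,n}(\cdot)$ of \cite[equation (5.11)]{Ours1}, with $n=n_{j^+}$. The merging estimate only enters through the constant in \cite[equation (5.9)]{Ours1}.

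What is genuinely new in the present lemma is that this $\mathcal{O}_{N,n}(\cdot)$ must be controlled for $n=n_{j^+}$ as large as $n_{\Sf}+2j-1$, i.e.\ up to order $\log g$. You correctly flag uniformity in $n$ as the main obstacle, but you only assert it (``uniform \dots up to a factor $D$ per step'') and do not prove it. As you note yourself, the ratio asymptotics you quote from \cite{mirzakhani2013} are fixed-$n$ statements. The tools you propose to upgrade them are not shown to do so. The bound $c_{g,n}(\alpha)\le V_{g,n}$ controls the dependence on boundary lengths at a fixed signature, not comparisons of volumes of different signatures with $V_g$. And \cite[Lemma 24]{nie2023}, as you use it, sums over all genera and cannot see the rank. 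The paper closes this step with \cite[Theorem 1.8]{mirzakhani2015}: the large-genus expansion of $V_{g,n}$ holds with a remainder uniform in $n$ as long as $n^2=o(g)$. This covers $n\le n_{\Sf}+2\log g$ and makes the bound of \cite[equation (5.11)]{Ours1} uniform in the required range.
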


  \begin{rem}\label{r:rank-lc}
    The rank of a realisation is always greater than $\chi(\Sf)$, and the argument leading to
    \eqref{e:upper-rank} also shows that for any realisation
    $\mathfrak{R}\in R_{g, Q}(\rho_j \Sf) $, we have for each individual term
  \begin{equation}\label{e:rank-lc}
    \frac{V_{g_\Sf,n_\Sf}}{V_g}
    \prod_{\substack{1 \leq i \leq \mathfrak{q} \\\chi_i>0}} V_{g_i, n_i}
    \leq   \frac{D^{Q+n_\Sf}}{g^{\chi(\Sf)}}.
  \end{equation}
  
  \end{rem}
\begin{proof}
  The proof is identical to the proof of \cite[Lemma 5.21]{Ours1}, but one needs to follow the
  dependency on $j$ of the implied constants. In the first lines of proof, there is an implied
  constant $\O[\Sf]{1}$ which is the number of partitions of $\partial \Sf$.  In the current
  context, this is replaced by the number of partitions of $\partial \Sf \sqcup \{1,2\}^j$ into at most
  $Q+n_\Sf$ sets. This number is bounded by $2^{(n_\Sf+2j)(Q+n_\Sf)}$. A second constant
  $\O[\Sf]{1}$ counts the number of subsets of $\{1, \ldots, \mathfrak{q}\}$ with
  $\mathfrak{q}=Q+n_\Sf$, which is less than $2^{Q+n_\Sf}$.

  Using \cite[Lemma 24]{nie2023}, we can prove that the implied constant in
  \cite[equation (5.9)]{Ours1} may be bounded by $C D^{k-1}$ for some $D>0$, where $k$ is the number
  of connected components of $S_g\setminus \rho_j \Sf$ for this realisation. Since we are only
  counting realisations such that $k\leq Q+n_\Sf$, the implied constant in
  \cite[equation (5.9)]{Ours1} and the following lines is at most $D^{Q+n_\Sf}$.

  In \cite[equation (5.10)]{Ours1}, $\mathfrak{q}$ now varies from $2$ to $Q+n_\Sf$ and $n_{j^+}$
  varies from $1$ to $n_\Sf+2j-1$. In \cite[equation (5.11)]{Ours1}, we thus need to understand the
  dependency on $n$ of the bound $\O[\ord, n]{.}$, with $n\leq n_\Sf+2j-1$. Using \cite[Theorem
  1.8]{mirzakhani2015}, we can see that this upper bound is uniform as long as $n^2=o(g)$, more than
  we need to treat $n\leq n_\Sf+2j-1$ with $n_\Sf$ fixed and $j\leq \log g$.
\end{proof}


\bibliographystyle{plain}
\bibliography{bibliography}

\begin{thebibliography}{10}

\bibitem{anantharaman2022}
Nalini Anantharaman and Laura Monk.
\newblock A high-genus asymptotic expansion of {{Weil}}\textendash{{Petersson}}
  volume polynomials.
\newblock {\em Journal of Mathematical Physics}, 63(4):043502, 2022.

\bibitem{Ours1}
Nalini Anantharaman and Laura Monk.
\newblock Friedman--ramanujan functions in random hyperbolic geometry and
  application to spectral gaps {I}.
\newblock {\em https://arxiv.org/abs/2304.02678}, 2023.

\bibitem{Moebius}
Nalini Anantharaman and Laura Monk.
\newblock A {Moebius} inversion formula to discard tangled hyperbolic surfaces.
\newblock {\em arXiv:2401.01601}, 2023.

\bibitem{Ours2}
Nalini Anantharaman and Laura Monk.
\newblock Friedman--ramanujan functions in random hyperbolic geometry and
  application to spectral gaps {II}.
\newblock {\em https://arxiv.org/abs/2502.12268}, 2025.

\bibitem{buser1992}
Peter Buser.
\newblock {\em Geometry and {{Spectra}} of {{Compact Riemann Surfaces}}}.
\newblock {Birkh\"auser}, {Boston}, 1992.

\bibitem{friedman2003}
Joel Friedman.
\newblock A proof of {{Alon}}'s second eigenvalue conjecture.
\newblock {\em Proceedings of the thirty-fifth annual ACM symposium on Theory
  of computing}, pages 720--724, 2003.

\bibitem{huber1974}
Heinz Huber.
\newblock {\"Uber den ersten Eigenwert des Laplace-Operators auf kompakten
  Riemannschen Fl\"achen}.
\newblock {\em Commentarii mathematici Helvetici}, 49:251--259, 1974.

\bibitem{lipnowski2021}
Michael Lipnowski and Alex Wright.
\newblock Towards optimal spectral gaps in large genus.
\newblock {\em Ann. Probab.}, 52(2):545--575, 2024.

\bibitem{mirzakhani2013}
Maryam Mirzakhani.
\newblock Growth of {{Weil--Petersson}} volumes and random hyperbolic surfaces
  of large genus.
\newblock {\em Journal of Differential Geometry}, 94(2):267--300, 2013.

\bibitem{mirzakhani2015}
Maryam Mirzakhani and Peter Zograf.
\newblock Towards large genus asymptotics of intersection numbers on moduli
  spaces of curves.
\newblock {\em Geometric and Functional Analysis}, 25(4):1258--1289, 2015.

\bibitem{monk2021a}
Laura Monk and Joe Thomas.
\newblock The tangle-free hypothesis on random hyperbolic surfaces.
\newblock {\em International Mathematics Research Notices}, rnab160, 2021.

\bibitem{nie2023}
Xin Nie, Yunhui Wu, and Yuhao Xue.
\newblock Large genus asymptotics for lengths of separating closed geodesics on
  random surfaces.
\newblock {\em Journal of Topology}, 16(1):106--175, 2023.

\bibitem{penner1922}
Robert~C. Penner and John~L. Harer.
\newblock {\em Combinatorics of Train Tracks}.
\newblock Princeton University Press, 1922.

\bibitem{wu2022}
Yunhui Wu and Yuhao Xue.
\newblock Random hyperbolic surfaces of large genus have first eigenvalues
  greater than {$\frac{3}{16}-\epsilon$}.
\newblock {\em Geometric and Functional Analysis}, 32(2):340--410, 2022.

\end{thebibliography}

\end{document}